\newtheorem{thrm}{Theorem}[section]
\newtheorem{lem}[thrm]{Lemma}
\newtheorem{cor}[thrm]{Corollary}
\newtheorem{prop}[thrm]{Proposition}
\theoremstyle{definition}
\newtheorem{defn}[thrm]{Definition}
\newtheorem{exmple}[thrm]{Example}
\newtheorem{rmk}[thrm]{Remark}
\newtheorem{ques}[thrm]{Question}
\begin{document}

\newcommand{\vol}{\mathrm{vol}}

\newcommand{\Supp}{\mathrm{Supp}}

\newcommand{\Sing}{\mathrm{Sing}}

\title{Comparing Numerical Dimensions}
\author{Brian Lehmann}
\thanks{This material is based upon work supported under a National Science
Foundation Graduate Research Fellowship.}
\address{Department of Mathematics, Rice University \\
Houston, TX \, \, 77005}
\email{blehmann@rice.edu}

\begin{abstract}
The numerical dimension is a numerical measure of the positivity of a pseudo-effective divisor $L$.  There are several proposed definitions of the numerical dimension due to \cite{nakayama04} and \cite{bdpp04}.  We prove the equality of these notions and give several additional characterizations.  We also prove some new properties of the numerical dimension.
\end{abstract}

\maketitle

\section{Introduction}

Suppose that $X$ is a smooth complex projective variety and $L$ is an effective divisor.  An important principle in birational geometry is that the geometry of $L$ is captured by the asymptotic behavior of the spaces $H^{0}(X,\mathcal{O}_{X}(mL))$ as $m$ increases.  When $L$ is a big divisor, this asymptotic behavior has close ties to the cohomological and numerical properties of $L$.  These connections have been applied profitably in many situations in birational geometry, most notably in the minimal model program.

However, when $L$ is an effective divisor that is not big, these close relationships no longer hold.  In order to understand the interplay between numerical and asymptotic properties, \cite{kawamata85} defined the numerical dimension of a nef divisor.  \cite{nakayama04} and \cite{bdpp04} proposed several different extensions of this notion to pseudo-effective divisors.  Our goal is to give a unifying framework for the numerical dimension by proving the equality of these definitions and giving  other natural descriptions as well.  We also describe some new properties of the numerical dimension.  The crucial perspectives are that:
\begin{enumerate}
\item The numerical dimension measures the asymptotic behavior of $L$ when it is perturbed by adding a small ample divisor $\epsilon A$.
\item The numerical dimension measures the largest dimension of a subvariety $W \subset X$ such that $L$ is positive along $W$.   An important subtlety is that one should not simply consider $L|_{W}$ but should ``remove'' contributions of the base locus of $L$.
\end{enumerate}

Since some of the definitions used in the main theorem are rather technical, we simply give references here.  We will describe the intuition behind the theorem in Section \ref{intuitivedesc}.  The notation $\mathbf{B}_{-}(L)$ denotes the diminished base locus defined in Section \ref{baseloci}, $\vol_{X|W}$ denotes the restricted volume defined in Section \ref{restrictedvolumesection},  $P_{\sigma}(-)$ denotes the divisorial Zariski decomposition defined in Section \ref{divzardecomsection}, and $\langle - \rangle$ denotes the restricted positive product defined in Section \ref{restrictedproductsection}.

\begin{thrm} \label{equalityofnumericaldimension}
Let $X$ be a normal projective variety over $\mathbb{C}$ and let $L$ be a pseudo-effective $\mathbb{R}$-Cartier $\mathbb{R}$-Weil divisor.  In the following,  $A$ will denote some fixed sufficiently ample $\mathbb{Z}$-divisor and $W$ will range over all subvarieties of $X$ not contained in $\mathbf{B}_{-}(L) \cup \Supp(L) \cup \Sing(X)$.  The following quantities coincide:

Perturbed growth condition:
\begin{enumerate}
\item $\max \left\{ k \in \mathbb{Z}_{\geq 0} \left| \limsup_{m \to \infty} \frac{h^{0}(X,\mathcal{O}_{X}(\lfloor mL \rfloor + A))}
{m^{k}} > 0 \right. \right\}$.
\end{enumerate}

Volume conditions:
\begin{enumerate}
\setcounter{enumi}{1}
\item $\max \{ k \in \mathbb{Z}_{\geq 0} | \,  \exists C>0 \textrm{ such that } Ct^{n-k} < \vol(L+tA) \textrm{ for all } t > 0 \}$.
\item $\max \{ \dim W | \lim_{\epsilon \to 0} \vol_{X|W}(L + \epsilon A) > 0 \}$.
\item $\max \{ \dim W | \inf_{\phi: Y \to X} \vol_{\widetilde{W}}(P_{\sigma}(\phi^{*}L)|_{\widetilde{W}}) >0 \}$ where $\phi$ varies over all birational maps such that no exceptional center contains $W$ and $\widetilde{W}$ denotes the strict transform of $W$.
\end{enumerate}

Positive product conditions:
\begin{enumerate}
\setcounter{enumi}{4}
\item $\max \{ k \in \mathbb{Z}_{\geq 0} | \langle L^{k} \rangle \neq 0 \}$.
\item $\max \{ \dim W | \langle L^{\dim W} \rangle_{X|W} > 0 \}$.
\end{enumerate}

Seshadri-type condition:
\begin{enumerate}
\setcounter{enumi}{6}
\item $\min \left\{ \dim W \left| \begin{array}{l} \phi^{*}L - \epsilon E \textrm{ is not
    pseudo-effective} \\ \textrm{for any } \epsilon > 0 \textrm{ where } \phi: Bl_{W}(X) \to X \\ \textrm{and } \mathcal{O}_{X}(-E) = \phi^{-1}\mathcal{I}_{W} \cdot \mathcal{O}_{Bl_{W}X} \end{array} \right. \right\}$.

    \noindent By convention, if $L$ is big we interpret this expression as returning $\dim(X)$.
\end{enumerate}
This common quantity is known as the numerical dimension of $L$, and is denoted $\nu(L)$.
It only depends on the numerical class of $L$.
\end{thrm}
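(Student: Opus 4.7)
The plan is to establish a cycle of equalities among the seven quantities, which I label $\nu_1,\ldots,\nu_7$ in the order listed. I group them into the global quantities ($\nu_1, \nu_2, \nu_5$), the restricted quantities ($\nu_3, \nu_4, \nu_6$), and the Seshadri-type quantity $\nu_7$. The global equivalences $\nu_1 = \nu_2 = \nu_5$ come from the standard asymptotic dictionary: for $\nu_1 = \nu_2$, the comparison $h^{0}(X,\mathcal{O}_{X}(\lfloor mL\rfloor + A)) \sim \frac{m^{n}}{n!}\vol(L + A/m)$ under the substitution $t = 1/m$ matches the polynomial growth rate in $m$ with the polynomial decay rate in $t$. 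For $\nu_2 = \nu_5$, the function $t \mapsto \vol(L+tA)$ admits a polynomial-like expansion in $t$ whose coefficient at order $t^{n-k}$ is a positive multiple of $\langle L^{k}\rangle \cdot A^{n-k}$; since $\langle L^{k}\rangle$ is pseudo-effective, it is nonzero exactly when this leading coefficient is positive.

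The restricted equivalences $\nu_3 = \nu_4 = \nu_6$ parallel the global ones on a subvariety. The equality $\nu_3 = \nu_6$ transports the dictionary of volumes and positive products to restricted invariants via Fujita approximation for $\vol_{X|W}$. The equality $\nu_3 = \nu_4$ is more subtle: on a birational model $\phi : Y \to X$, the sections of $\phi^{*}(L+\epsilon A)$ restricting nontrivially to $\widetilde{W}$ are controlled by $P_{\sigma}(\phi^{*}L)$, and taking the infimum over models isolates this positive part from the exceptional contribution, matching $\lim_{\epsilon\to 0}\vol_{X|W}(L+\epsilon A)$. The bridge between the global and restricted groups, which I formulate as $\nu_5 = \nu_6$, comes from cutting by very general ample divisors: if $\langle L^{k}\rangle \ne 0$, then for $W$ a dimension-$k$ very general complete intersection of ample divisors, the projection formula gives $\langle L^{\dim W}\rangle_{X|W} > 0$; conversely, a positive restricted positive product on some $W$ forces the ambient positive product to be nonzero.

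For the Seshadri-type quantity $\nu_7$, the identity $\mathcal{O}(-E) = \phi^{-1}\mathcal{I}_{W}\cdot\mathcal{O}_{Bl_{W}X}$ makes pseudo-effectivity of $\phi^{*}L - \epsilon E$ equivalent to the existence of a sequence of $\mathbb{Q}$-effective divisors numerically equivalent to $L$ whose asymptotic multiplicity along $W$ is at least $\epsilon$. The inequality $\nu_7 \le \nu_3$ is obtained by constructing a $W$ of dimension $\nu_3$ along which $L$ has zero asymptotic multiplicity, for instance by using a general fiber of a map associated to the sections of $L$ that saturate the restricted volume on a maximal-dimensional witness. The inequality $\nu_7 \ge \nu_3$ is obtained by showing that any allowed $W$ of strictly smaller dimension admits sections of multiples of $L$ with positive multiplicity along $W$; these are produced from the abundance of sections contributing to the positive restricted volumes on larger subvarieties containing $W$ (in particular using that $W \not\subset \mathbf{B}_{-}(L)$). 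The numerical invariance of $\nu(L)$ is then inherited from any numerical characterization, for example $\nu_5$.

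The principal technical obstacle is the equivalence $\nu_3 = \nu_4$: this requires a careful analysis of how the divisorial Zariski decomposition $P_{\sigma} + N_{\sigma}$ behaves under birational pullback, and in particular uniform control on the negative part $N_{\sigma}(\phi^{*}(L+\epsilon A))$ as both $\phi$ ranges over birational models and $\epsilon \to 0$. A secondary but nontrivial point is the construction of the dimension-$\nu_3$ witness $W$ needed for $\nu_7 \le \nu_3$, whose existence and vanishing-multiplicity property rely on the detailed structure of $P_{\sigma}(L)$ and its asymptotic base locus.
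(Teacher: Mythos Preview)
Your outline has a genuine gap at the claimed equivalence $\nu_{1}=\nu_{2}$. The heuristic
\[
h^{0}(X,\mathcal{O}_{X}(\lfloor mL\rfloor + A)) \;\sim\; \frac{m^{n}}{n!}\,\vol\!\left(L + \tfrac{1}{m}A\right)
\]
is not valid: the right-hand side is itself an asymptotic limit $\limsup_{j\to\infty} h^{0}(X,\mathcal{O}_{X}(j(\lfloor mL\rfloor + A)))/(j^{n}/n!)$, and for a single divisor $D$ there is no a priori comparison between $h^{0}(X,D)$ and $\vol(D)$ in either direction. In fact the two-sided estimate $C_{1}m^{\nu(L)} < h^{0}(X,\lfloor mL\rfloor + A) < C_{2}m^{\nu(L)}$ is one of the announced \emph{consequences} of the theorem (it resolves Nakayama's question whether $\kappa_{\sigma}^{-}=\kappa_{\sigma}=\kappa_{\sigma}^{+}$), not an input to its proof. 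Your scheme would have to establish this directly, and no mechanism for doing so is indicated.

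The paper closes the cycle along a different arc, and the step you are missing is the actual technical core. In your numbering the chain runs $\nu_{5}=\nu_{6}=\nu_{3}=\nu_{4}\le\nu_{1}\le\nu_{7}\le\nu_{5}$, with $\nu_{2}=\nu_{5}$ handled separately much as you describe. The hard inequality is $\nu_{4}\le\nu_{1}$: given a $k$-dimensional $W$ with $\inf_{\phi}\vol_{\widetilde W}(P_{\sigma}(\phi^{*}L)|_{\widetilde W})>0$, one must manufacture $\gtrsim m^{k}$ global sections of $\lfloor mL\rfloor + A$ on $X$. This requires (i) a volume-derivative argument (via the differentiability of $\vol$) producing a fixed ample $H$ on $W$ such that $P_{\sigma}(\phi^{*}L)|_{\widetilde W}-\phi^{*}H$ is big on \emph{every} model simultaneously, and then (ii) a Kawamata--Viehweg--based lifting theorem for twisted linear series to propagate the resulting sections from $\widetilde W$ up to $X$. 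What you flag as the principal obstacle, $\nu_{3}=\nu_{4}$, is by contrast a formal consequence of how the restricted positive product is approximated by $P_{\sigma}$ on higher models and is comparatively routine. Your handling of $\nu_{7}$ is likewise misaligned: the paper links $\nu_{7}$ upward to $\nu_{1}$ (quoting Nakayama) and downward to $\nu_{5}$ via a positive-product computation on the blow-up along $W$, rather than to $\nu_{3}$ through asymptotic multiplicities.
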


The definitions $\kappa_{\sigma}$ and $\kappa_{\nu}$ of \cite{nakayama04} are listed as (1) and (7) respectively and the definition $\nu$ of \cite{bdpp04} is listed as (5).  When $L$ is nef this definition agrees with the definition of \cite{kawamata85}.

\begin{rmk}
The numerical dimension also admits a natural interpretation with respect to separation of jets, reduced volumes, and the other invariants considered in \cite{elmnp06}.
\end{rmk}

The numerical dimension is very natural from the viewpoint of birational geometry.  It is established in \cite{nakayama04} that for a pseudo-effective divisor $L$:
\begin{itemize}
\item $0 \leq \nu(L) \leq \dim(X)$.
\item $\nu(L) = \dim(X)$ iff $L$ is big and $\nu(L) = 0$ iff $P_{\sigma}(L) \equiv 0$.
\item $\kappa(L) \leq \nu(L)$.
\item If $\phi: Y \to X$ is a surjective morphism then $\nu(\phi^{*}L) = \nu(L)$. 
\end{itemize}
We will prove two additional basic properties, answering a question of \cite{nakayama04}:
\begin{itemize}
\item $\nu(L) = \nu(P_{\sigma}(L))$.
\item Fix some sufficiently ample $\mathbb{Z}$-divisor $A$.  Then there are positive constants $C_{1}, C_{2}$ such that
\begin{equation*}
C_{1}m^{\nu(L)} < h^{0}(X,\mathcal{O}_{X}(\lfloor mL \rfloor + A)) < C_{2}m^{\nu(L)}
\end{equation*}
for every sufficiently large $m$.
\end{itemize}
The properties of $\nu(L)$ will be discussed in more depth in Section \ref{numericaldimensionsection}.

\subsection{Intuitive Description} \label{intuitivedesc}

We now turn to an intuitive description of several of the definitions in Theorem \ref{equalityofnumericaldimension}.  Classically one measures the positivity of a divisor using the rate of growth of sections of $H^{0}(X,\mathcal{O}_{X}(mL))$ as $m$ increases.  More precisely, the Iitaka dimension is defined as
\begin{equation*}
\kappa(L) = \max \left\{ k \in \mathbb{Z}_{\geq 0} \left| \limsup_{m \to \infty} \frac{h^{0}(X,\mathcal{O}_{X}(\lfloor mL \rfloor))} {m^{k}} > 0 \right. \right\}.
\end{equation*}
(If $H^{0}(X,\mathcal{O}_{X}(\lfloor mL \rfloor))=0$ for every $m$, we set $\kappa(L) = -\infty$.)  To obtain a numerical invariant, we must instead consider sections of $mL + A$ for some sufficiently ample divisor $A$.  Thus definition (1) indicates that $\nu(L)$ can be viewed as a numerical analogue of the Iitaka dimension.

Another way to calculate the positivity of $L$ is to use intersection products.  \cite{kawamata85} defined the numerical dimension of a nef divisor $L$ as
\begin{equation*}
\nu(L) := \max \{ k \in \mathbb{Z}_{\geq 0} | L^{k} \cdot A^{n-k} \neq 0 \}
\end{equation*}
for some (thus any) ample divisor $A$.  The naive extension of this definition to pseudo-effective divisors does not work, as the diminished base locus of $L$ might contribute positively to this intersection and distort the measurement.  The positive product of \cite{bdpp04} gives a precise method of taking intersection products while discounting these contributions.  Definition (5) shows that $\nu(L)$ can be defined as in \cite{kawamata85} by replacing the intersection product by the positive product.

A third way to measure the positivity of a divisor is the volume: setting $n = \dim(X)$,
\begin{equation*}
\vol(L) := \limsup_{m \to \infty} \frac {h^{0}(X,\mathcal{O}_{X}(mL))} {m^{n}/n!}.
\end{equation*}
Conceptually, one can view the volume as a loose analogue of the top self-intersection of $L$.  While this latter quantity does not usually yield geometric information, the volume is a useful alternative that still shares many of the desirable properties of intersection products.  \cite{lm09} and \cite{bfj09} show that $\vol$ is a differentiable function on the space of big $\mathbb{R}$-Cartier divisors.  Definition (2) demonstrates that $\nu(L)$ controls the derivative of $\vol$ near $L$.

\subsection{Restricted Numerical Dimension}

It is useful to study not only numerical invariants on $X$ but also restricted versions that measure positivity along a subvariety $V$.  We will define a restricted numerical dimension of $L$ along a subvariety $V$ of $X$.  Just as in the non-restricted case, the restricted numerical dimension should measure the maximal dimension of a very general subvariety $W \subset V$ such that the ``positive restriction'' of $L$ is big along $W$.

\begin{defn} \label{firstrestnumdim}
Let $X$ be a smooth variety, $V$ a subvariety, and $L$ a pseudo-effective $\mathbb{R}$-divisor such that $V \not \subset \mathbf{B}_{-}(L)$.  Fix an ample divisor $A$.  We define the restricted numerical dimension $\nu_{X|V}(L)$ to be
\begin{equation*}
\nu_{X|V}(L) := \max \{ \dim W | \lim_{\epsilon \to 0} \vol_{X|W}(L + \epsilon A) > 0 \}
\end{equation*}
where $W$ ranges over smooth subvarieties of $V$ not contained in $\mathbf{B}_{-}(L)$.  The restricted numerical dimension is an invariant of the numerical class of $L$.
\end{defn}

The restricted numerical dimension satisfies (slightly weaker) analogues of Theorems \ref{equalityofnumericaldimension} and \ref{numdimproperties}.  For nef divisors we obtain nothing new because $\nu_{L|V}(L) = \nu_{V}(L|_{V})$.  Nevertheless, the restricted numerical dimension plays an important role in understanding the geometry of a pseudo-effective divisor $L$.

\subsection{Organization} The paper is organized as follows.  Section \ref{divzardecomsection} is devoted to the study of the divisorial Zariski decomposition, giving the technical background for the rest of the paper.  Sections \ref{restrictedproductsection} and \ref{nakayamaconstantsection} prove some basic facts about the invariants of Theorem \ref{equalityofnumericaldimension}.  We then turn to the proof of Theorem \ref{equalityofnumericaldimension} in Section \ref{numericaldimensionsection}.  Section \ref{restrictednumericaldimensionsection} is devoted to a discussion of the restricted numerical dimension.

I would like to thank my advisor J.~M\textsuperscript{c}Kernan for his advice and support.  I also thank R.~Lazarsfeld for some helpful conversations, Y.~Gongyo for pointing out several mistakes in an earlier draft, and the referee for the careful revisions.

\section{Preliminaries} \label{preliminaries}

All schemes will lie over the base field $\mathbb{C}$.  A variety will always be an irreducible reduced projective scheme.  The ambient variety $X$ is assumed to be normal unless otherwise noted.  The term ``divisor'' will always refer to an $\mathbb{R}$-Cartier $\mathbb{R}$-Weil divisor.  $N^{p}(X)$ will denote the $\mathbb{R}$-vector space of codimension $p$ cycles quotiented out by those numerically equivalent to $0$, and $CD(X)$ will denote the $\mathbb{R}$-vector space of Cartier divisors quotiented out by those that have degree $0$ along every irreducible curve.

\subsection{Base Loci} \label{baseloci}

Let $L$ be a pseudo-effective divisor.  The $\mathbb{R}$-stable base locus of $L$ is defined to be
\begin{equation*}
\mathbf{B}_{\mathbb{R}}(L) := \bigcap \{ \; \Supp(D) \; | \; D \geq 0 \; \; \textrm{and} \; \; D \sim_{\mathbb{R}} L \}.
\end{equation*}
When $L$ is not $\mathbb{R}$-linearly equivalent to an effective divisor, we use the convention that $\mathbf{B}_{\mathbb{R}}(L) = X$.  The $\mathbb{R}$-stable base locus is always a Zariski-closed subset of $X$; we do not associate any scheme structure to it.

We obtain a much better behaved invariant by perturbing by an ample divisor.  This approach to invariants was first considered in \cite{nakamaye00} and was studied systematically in \cite{elmnp05}.

\begin{defn}
Let $L$ be a pseudo-effective divisor.  The augmented base locus of $L$ is
\begin{equation*}
\mathbf{B}_{+}(L) := \bigcap_{A \textrm{ ample}} \mathbf{B}_{\mathbb{R}}(L - A).
\end{equation*}
\end{defn}

Note that $\mathbf{B}_{+}(L) \supset \mathbf{B}_{\mathbb{R}}(L)$.  \cite{elmnp05} Corollary 1.6 verifies that the augmented base locus is equal to $\mathbf{B}_{\mathbb{R}}(L - A)$ for any sufficiently small ample divisor $A$.  Thus $\mathbf{B}_{+}(L)$ is a Zariski-closed subset of $X$ and it only depends on the numerical class of $L$.

For the second variant, we add on a small ample divisor.

\begin{defn}
Let $L$ be a pseudo-effective divisor.  The diminished base locus of $L$ is
\begin{equation*}
\mathbf{B}_{-}(L) = \bigcup_{A \textrm{ ample}} \mathbf{B}_{\mathbb{R}}(L + A).
\end{equation*}
\end{defn}

\begin{rmk}
Although \cite{nakayama04} uses a different definition, it is equivalent to ours by \cite{nakayama04} Theorem V.1.3.
\end{rmk}

\cite{elmnp05} Proposition 1.15 checks that the diminished base locus only depends on the numerical class of $L$.  Unlike the augmented base locus, the diminished base locus is probably not a Zariski-closed subset (although no examples are known of such pathological behavior).  However, it is a countable union of closed subsets by the following theorem.

\begin{thrm}[\cite{nakayama04}, Theorem V.1.3] \label{restrictedbaselocuscountable}
Let $X$ be a smooth variety and let $L$ be a pseudo-effective divisor.  There is an ample divisor $A$ such that
\begin{equation*}
\mathbf{B}_{-}(L) = \bigcup_{m} \mathrm{Bs}(\lceil mL \rceil + A)
\end{equation*}
where $\mathrm{Bs}$ denotes the (set-theoretic) base locus.
\end{thrm}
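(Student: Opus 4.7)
The plan is to establish the two inclusions of the claimed equality separately. A preliminary observation is that, since $A$ is ample, the countable cofinal family $\{A/m\}_{m \geq 1}$ captures $\mathbf{B}_-(L)$: by definition,
\begin{equation*}
\mathbf{B}_-(L) \; = \; \bigcup_{m \geq 1} \mathbf{B}_{\mathbb{R}}(L + A/m) \; = \; \bigcup_m \mathbf{B}_{\mathbb{R}}(mL + A)
\end{equation*}
after scaling. The content of the theorem is therefore to bridge the small gap between the $\mathbb{R}$-stable base locus of the $\mathbb{R}$-divisor $mL + A$ and the integral base locus of the rounded integer divisor $\lceil mL \rceil + A$. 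The obstruction to matching them directly is the effective fractional correction $\Delta_m := \lceil mL \rceil - mL$, supported on $\Supp L$ with coefficients in $[0,1)$, so that $\Delta_m/m \to 0$ in $N^{1}(X)_{\mathbb{R}}$.

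For the inclusion $\bigcup_m \mathrm{Bs}(\lceil mL \rceil + A) \subseteq \mathbf{B}_-(L)$, I argue the contrapositive. If $x \notin \mathbf{B}_-(L)$ then for each $m$ there is an effective $\mathbb{R}$-divisor $D_m \sim_{\mathbb{R}} L + A/m$ not passing through $x$; scaling and adding the effective divisor $\Delta_m$ produces $mD_m + \Delta_m \sim_{\mathbb{R}} \lceil mL \rceil + A$, an effective $\mathbb{R}$-divisor avoiding $x$ (after a small modification at components of $\Supp L$ through $x$, using that $A$ is ample). This yields $x \notin \mathbf{B}_{\mathbb{R}}(\lceil mL \rceil + A)$, and the upgrade to $x \notin \mathrm{Bs}(\lceil mL \rceil + A)$ is the crux of this direction: one uses that $A$ is sufficiently ample to invoke a Kodaira--Nadel vanishing argument, showing that the global sections of $\lceil mL \rceil + A$ generate the sheaf outside the co-support of the asymptotic multiplier ideal associated to $mL$, which is itself contained in $\mathbf{B}_-(L)$.

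For the reverse inclusion $\mathbf{B}_-(L) \subseteq \bigcup_m \mathrm{Bs}(\lceil mL \rceil + A)$, I assume $x \notin \mathrm{Bs}(\lceil mL \rceil + A)$ for every $m$ and show $x \notin \mathbf{B}_-(L)$. Fix any ample $H$; the goal is to produce an effective $\mathbb{R}$-divisor $\sim_{\mathbb{R}} L + H$ not through $x$. Let $F$ be the reduced divisor supported on $\Supp L$, so $\Delta_m \leq F$ coefficient-wise. For each $m$ pick an effective integer $E_m \sim \lceil mL \rceil + A$ with $x \notin \Supp E_m$. For $m$ large enough the class $H - A/m - F/m$ is ample, so choose an effective $\mathbb{R}$-representative $G$ of it avoiding $x$. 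Then
\begin{equation*}
\tfrac{1}{m} E_m \; + \; G \; + \; \tfrac{1}{m}(F - \Delta_m) \; \sim_{\mathbb{R}} \; L + H
\end{equation*}
is a sum of three effective $\mathbb{R}$-divisors, each avoiding $x$, as required.

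The principal obstacle is the first inclusion's need to upgrade from $\mathbf{B}_{\mathbb{R}}$ to the actual integral base locus $\mathrm{Bs}$ of $\lceil mL \rceil + A$. In general these invariants differ for an integral divisor, and the passage hinges on the ``sufficient ampleness'' of $A$: one needs $A - K_X$ positive enough to apply a Nadel-type vanishing theorem on a log resolution, converting the $\mathbb{R}$-theoretic existence of effective representatives into honest integral global sections separating the point $x$. The second inclusion is, by contrast, an approximation argument made possible by the quantitative estimate that the coefficients of $\Delta_m$ are bounded by $1/m$.
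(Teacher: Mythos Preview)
The paper does not prove this statement; it is quoted verbatim from \cite{nakayama04}, Theorem~V.1.3, and used as a black-box input throughout. There is therefore no proof in the paper to compare your attempt against.

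As a standalone sketch your outline is broadly correct and follows the standard route (which is essentially Nakayama's). Two minor remarks. In your second inclusion, the correction term $\tfrac{1}{m}(F-\Delta_m)$ is supported on $\Supp L$ and may well pass through $x$; the clean fix is to observe that for $m$ large the $\mathbb{R}$-divisor $H-\tfrac{1}{m}A-\tfrac{1}{m}\Delta_m$ already has ample numerical class, hence is $\mathbb{R}$-linearly equivalent to an effective $G_m$ avoiding $x$, and $\tfrac{1}{m}E_m+G_m\sim_{\mathbb{R}}L+H$ works directly with no third term. In your first inclusion, the $D_m$ construction and the Nadel-vanishing step are really two separate arguments, and the first is superfluous: once $A$ is chosen of the form $K_X+(n+1)H+(\text{ample})$ with $H$ very ample, Nadel vanishing plus Castelnuovo--Mumford regularity give global generation of $\mathcal{O}_X(\lceil mL\rceil+A)\otimes\mathcal{J}$ for a suitable asymptotic multiplier ideal whose zero locus lies in $\mathbf{B}_-(L)$, yielding $\mathrm{Bs}(\lceil mL\rceil+A)\subseteq\mathbf{B}_-(L)$ without passing through $\mathbf{B}_{\mathbb{R}}$.
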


\cite{nakayama04} proves the invariance of $\mathbf{B}_{-}(L)$ under surjective morphisms.

\begin{prop} \label{restrictedbaselocusinvariance}
Let $\phi: Y \to X$ be a surjective morphism from a normal variety $Y$ onto a normal variety $X$.  Suppose that $L$ is a pseudo-effective divisor on $X$.  Then we have an equality of sets
\begin{equation*}
\phi^{-1}\mathbf{B}_{-}(L) \cup \phi^{-1}\Sing(X) = \mathbf{B}_{-}(\phi^{*}L) \cup \phi^{-1}\Sing(X).
\end{equation*}
\end{prop}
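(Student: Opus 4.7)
The plan is to prove the two set inclusions separately, using the presentation $\mathbf{B}_{-}(M) = \bigcup_{A' \text{ ample}} \mathbf{B}_{\mathbb{R}}(M+A')$ throughout.

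For $\mathbf{B}_{-}(\phi^{*}L) \cup \phi^{-1}\Sing(X) \subseteq \phi^{-1}\mathbf{B}_{-}(L) \cup \phi^{-1}\Sing(X)$, we argue by contrapositive. Suppose $\phi(y) \notin \mathbf{B}_{-}(L) \cup \Sing(X)$; then for every ample $A$ on $X$ there is an effective $D_{A} \sim_{\mathbb{R}} L + A$ with $\phi(y) \notin \Supp(D_{A})$, and by smoothness of $\phi(y)$ its pullback $\phi^{*}D_{A}$ is an effective $\mathbb{R}$-divisor numerically equivalent to $\phi^{*}L + \phi^{*}A$ and missing $y$. For any prescribed ample $A_{Y}$ on $Y$, we pick $A$ small enough that $A_{Y} - \phi^{*}A$ is still ample on $Y$; decomposing this ample $\mathbb{R}$-divisor into ample $\mathbb{Q}$-classes and using basepoint-free multiples produces an effective representative of $A_{Y} - \phi^{*}A$ missing $y$, and summing with $\phi^{*}D_{A}$ yields an effective representative of $\phi^{*}L + A_{Y}$ missing $y$, so $y \notin \mathbf{B}_{-}(\phi^{*}L)$.

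The reverse inclusion $\phi^{-1}\mathbf{B}_{-}(L) \cup \phi^{-1}\Sing(X) \subseteq \mathbf{B}_{-}(\phi^{*}L) \cup \phi^{-1}\Sing(X)$ is subtler because divisors on $Y$ do not in general pushforward to divisors on $X$ when $\dim Y > \dim X$. Our plan is to factor $\phi = g \circ f$ via the Stein factorization, with $f: Y \to Z$ having connected fibers and $g: Z \to X$ finite surjective (with $Z$ normal), and to handle each piece independently. For $f$, the projection formula together with $f_{*}\mathcal{O}_{Y} = \mathcal{O}_{Z}$ lets one relate $\mathbb{R}$-linear systems on $Y$ (after perturbing by pullbacks of ample divisors from $Z$) to those on $Z$; combined with Theorem \ref{restrictedbaselocuscountable} applied on a resolution, this should give $\mathbf{B}_{-}(f^{*}N) = f^{-1}\mathbf{B}_{-}(N)$ off of the singular loci. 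For the finite piece $g$, we will pass to a Galois closure $\tilde{g}: \tilde{Z} \to X$ with group $G$: the $G$-invariance of $\mathbf{B}_{-}(\tilde{g}^{*}L)$ promotes avoidance at a single preimage of $\phi(y)$ to avoidance at the whole finite Galois orbit $\tilde{g}^{-1}(\phi(y))$. Since $\tilde{g}^{*}L + \tilde{g}^{*}A$ is big for $A$ ample on $X$, a Zariski-open avoidance argument on a sufficiently divisible $\mathbb{Q}$-linear system (approximating the $\mathbb{R}$-class by rational pieces) produces one effective $\tilde{D} \sim_{\mathbb{R}} \tilde{g}^{*}L + \tilde{g}^{*}A$ whose support misses the whole orbit; pushing forward $\tilde{D}$ and dividing by $\deg \tilde{g}$ yields an effective representative of $L + A$ avoiding $\phi(y)$, so $\phi(y) \notin \mathbf{B}_{-}(L)$.

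The hard part will be the Galois-orbit avoidance step: we must extend the openness avoidance argument from $\mathbb{Q}$-linear systems to $\mathbb{R}$-linear equivalence (by writing the perturbation as a positive combination of ample $\mathbb{Q}$-divisors and assembling $\mathbb{Q}$-representatives of each), and carefully track singular-locus contributions through both the Stein factorization and the Galois closure so that the singular-locus corrections appearing in the statement absorb any residual errors.
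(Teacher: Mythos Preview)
Your first inclusion is correct and essentially identical to the paper's argument. For the reverse inclusion, however, your route via Stein factorization and Galois closure diverges substantially from the paper, which instead reduces to the case where both $X$ and $Y$ are smooth (passing to a higher birational model of $Y$, which is harmless by the first inclusion) and then invokes Nakayama's valuative characterization of $\mathbf{B}_{-}$: a closed point $z$ lies in $\mathbf{B}_{-}(M)$ if and only if for every birational map $\psi:W\to Z$ from a smooth $W$ and every $\psi$-exceptional divisor $E$ with $\psi(E)=z$ one has $E\subset\mathbf{B}_{-}(\psi^{*}M)$ (\cite{nakayama04}, Lemmas III.2.3 and III.5.15). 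Once you have this, the inclusion $\phi^{-1}\mathbf{B}_{-}(L)\subset\mathbf{B}_{-}(\phi^{*}L)$ follows immediately by composing with blow-ups.

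Your strategy can be made to work, but as written it has two real gaps. First, in the connected-fibre piece you want to deduce $f(y)\notin\mathbf{B}_{-}(N)$ from $y\notin\mathbf{B}_{-}(f^{*}N)$, and you suggest perturbing by pullbacks of ample divisors from $Z$ together with the projection formula. But $f^{*}A_{Z}$ is only big and nef on $Y$, not ample, so the union $\bigcup_{A_{Z}}\mathbf{B}_{\mathbb{R}}(f^{*}N+f^{*}A_{Z})$ is \emph{a priori} larger than $\mathbf{B}_{-}(f^{*}N)$; the projection formula alone does not bridge this. You would need an additional argument (for instance, restricting to a general complete-intersection multisection through $y$ to reduce to the generically finite case) that you have not supplied. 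Second, in the Galois step, $G$-invariance of $\mathbf{B}_{\mathbb{R}}(\tilde{g}^{*}(L+A))$ tells you each $\tilde{z}_{i}$ individually lies outside the base locus, but producing a \emph{single} effective $\tilde{D}\sim_{\mathbb{R}}\tilde{g}^{*}(L+A)$ avoiding the entire orbit is a genuine simultaneous-avoidance problem for $\mathbb{R}$-linear equivalence. Your suggestion of ``approximating the $\mathbb{R}$-class by rational pieces'' does not straightforwardly work: convex combinations enlarge supports, and decomposing $L+A$ into $\mathbb{Q}$-pieces does not give you control over the base locus of each piece. One can resolve this (e.g.\ by passing through Theorem~\ref{restrictedbaselocuscountable} to reduce everything to honest $\mathbb{Z}$-linear systems, where simultaneous avoidance of finitely many points is an open condition, and then using a norm map), but this requires real care that your sketch does not provide.

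In short: the paper's proof is shorter because it outsources the hard direction to Nakayama's structural lemmas about $\mathbf{B}_{-}$; your approach is more self-contained in spirit but, as it stands, the connected-fibre step and the $\mathbb{R}$-divisor simultaneous-avoidance step are both genuinely incomplete.
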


\begin{proof}
Fix an ample divisor $H$ on $Y$ and an ample divisor $A$ on $X$.  We have
\begin{align*}
\phi^{-1}\mathbf{B}_{-}(L) & = \phi^{-1}\left( \bigcup_{m} \mathbf{B}_{\mathbb{R}}\left(L + \frac{1}{m}A \right) \right) \textrm{ by \cite{elmnp05} Remark 1.20} \\
& = \bigcup_{m} \mathbf{B}_{\mathbb{R}}\left( \phi^{*} \left(L + \frac{1}{m}A \right) \right) \\
& \supset \bigcup_{m} \mathbf{B}_{\mathbb{R}}\left( \phi^{*} \left(L + \frac{1}{m}A \right) + \frac{1}{m}H \right) \\
& = \mathbf{B}_{-}(\phi^{*}L) \textrm{ by \cite{elmnp05} Remark 1.20}.
\end{align*}
This proves the inclusion $\supset$.  Furthermore, the same argument shows that it suffices to prove the reverse inclusion $\subset$ after replacing $Y$ by any higher birational model.

We next reduce to the case where $X$ and $Y$ are smooth.  Let $\psi: \widetilde{X} \to X$ denote a resolution that is an isomorphism away from $\Sing(X)$.  Suppose that the closed point $\widetilde{x} \not \in \mathbf{B}_{-}(\phi^{*}L) \cup \phi^{-1}\Sing(X)$.  Fix an ample divisor $\widetilde{A}$ on $\widetilde{X}$ and choose an ample divisor $A$ on $X$ so that $\phi^{*}A - \widetilde{A}$ is an effective divisor $E$.  Since $\widetilde{x}$ is not contained in the $\psi$-exceptional locus, we may also ensure that that $\widetilde{x} \not \in \Supp(E)$.  Then
\begin{equation*}
\widetilde{x} \not \in \mathbf{B}_{\mathbb{R}}(\phi^{*}(L) + \epsilon H + \epsilon E) = \phi^{-1}\mathbf{B}_{\mathbb{R}}(\phi^{*}(L+ \epsilon A))
\end{equation*}
for any $\epsilon >0$, showing that
\begin{equation*}
\psi^{-1}\mathbf{B}_{-}(L) \cup \psi^{-1}\Sing(X) = \mathbf{B}_{-}(\psi^{*}L) \cup \psi^{-1}\Sing(X).
\end{equation*}
As discussed earlier, we may verify the desired equality of sets by replacing $Y$ by a smooth birational model that dominates $\widetilde{X}$.  Thus we have reduced to case when both $X$ and $Y$ are smooth.

\cite{nakayama04} Lemma III.2.3 and Lemma III.5.15 together show that for a smooth variety $Z$ and a  pseudo-effective divisor $M$ on $Z$, a closed point $z \in Z$ is contained in $\mathbf{B}_{-}(M)$ if and only if for every birational map $\psi: W \to Z$ from a smooth variety $W$ and every $\psi$-exceptional divisor $E$ with $\psi(E) = z$ we have $E \subset \mathbf{B}_{-}(\psi^{*}L)$.  This immediately implies the inclusion $\subset$ when both $X$ and $Y$ are smooth, concluding the proof.
\end{proof}

\subsection{$V$-pseudo-effective Cone and $V$-big Cone}

The perturbed base loci can be used to describe when a divisor $L$ sits in ``general position'' with respect to a subvariety $V$.

\begin{defn} \label{vbigcone}
Suppose that $V \subset X$ is a subvariety.  We define the $V$-pseudo-effective cone $\mathrm{Psef}_{V}(X)$ to be the cone in $CD(X)$ generated by classes of divisors $L$ with $V \not \subset \mathbf{B}_{-}(L)$.  We define the $V$-big cone $\mathrm{Big}_{V}(X)$ to be the cone generated by classes of divisors $L$ with $V \not \subset \mathbf{B}_{+}(L)$.
\end{defn}

It is easy to verify that $\mathrm{Psef}_{V}(X)$ is closed and $\mathrm{Big}_{V}(X)$ is its interior.  Note also that $L|_{V}$ is pseudo-effective whenever $L$ has numerical class in $\mathrm{Psef}_{V}(X)$.  The following perspective will sometimes be useful.

\begin{defn}
Suppose that $V \subset X$ is a subvariety.  If $L$ is an effective divisor such that $\Supp(L) \not \supset V$ we say $L \geq_{V} 0$.
\end{defn}

The relationship with the earlier criteria is given by a trivial lemma.

\begin{lem} \label{vbignesscondition}
Suppose that $V \subset X$ is a subvariety.  If $L$ is a $V$-big divisor, then $L \sim_{\mathbb{R}} L'$ for some $L' \geq_{V} 0$.
\end{lem}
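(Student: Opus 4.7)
The plan is to unpack the definition of $V$-bigness and produce the desired effective representative by combining an approximate section of $L-A$ (for small ample $A$) with an effective representative of $A$ chosen to avoid $V$.

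First, since $L$ is $V$-big, we have $V \not\subset \mathbf{B}_+(L)$ by Definition \ref{vbigcone}. By \cite{elmnp05} Corollary 1.6 (which is already invoked earlier in the excerpt), there is a sufficiently small ample $\mathbb{R}$-divisor $A$ such that $\mathbf{B}_+(L) = \mathbf{B}_{\mathbb{R}}(L-A)$, and in particular $V \not\subset \mathbf{B}_{\mathbb{R}}(L-A)$. By the definition of the $\mathbb{R}$-stable base locus as an intersection over effective representatives, there exists an effective $\mathbb{R}$-divisor $D \sim_{\mathbb{R}} L-A$ with $V \not\subset \Supp(D)$.

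Next, I would exploit the amplitude of $A$ to produce an effective $\mathbb{R}$-divisor $A' \sim_{\mathbb{R}} A$ with $V \not\subset \Supp(A')$. For this one writes $A = \sum r_i A_i$ with $r_i > 0$ and each $A_i$ a very ample $\mathbb{Z}$-divisor; since each linear system $|A_i|$ is basepoint-free, one can pick effective members $A_i' \in |A_i|$ with $V \not\subset \Supp(A_i')$, and then $A' := \sum r_i A_i'$ works. Setting $L' := D + A'$, we obtain an effective $\mathbb{R}$-divisor with $L' \sim_{\mathbb{R}} L$, and
\begin{equation*}
\Supp(L') \subset \Supp(D) \cup \Supp(A'),
\end{equation*}
neither of which contains $V$, so $V \not\subset \Supp(L')$, i.e. $L' \geq_V 0$ as required.

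There is no genuine obstacle here; the only subtle step is ensuring that the ample $\mathbb{R}$-divisor $A$ admits an effective representative whose support misses $V$, and that is handled by the basepoint-freeness of very ample integral classes together with the positivity of the coefficients $r_i$.
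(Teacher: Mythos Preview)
Your argument is correct. The paper does not actually give a proof of this lemma; it is introduced as a ``trivial lemma'' and left without justification, so there is no paper proof to compare against.

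That said, your route is slightly longer than necessary. Since $\mathbf{B}_{+}(L) \supset \mathbf{B}_{\mathbb{R}}(L)$ (stated in the paper just after the definition of $\mathbf{B}_{+}$), the hypothesis $V \not\subset \mathbf{B}_{+}(L)$ already gives $V \not\subset \mathbf{B}_{\mathbb{R}}(L)$, and then the very definition of $\mathbf{B}_{\mathbb{R}}(L)$ as $\bigcap\{\Supp(D) : D \geq 0, \; D \sim_{\mathbb{R}} L\}$ produces an effective $L' \sim_{\mathbb{R}} L$ with $V \not\subset \Supp(L')$ directly. Your decomposition $L = (L-A) + A$ and separate treatment of the ample piece is not needed, though it is of course valid.
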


\subsection{Admissible and V-Birational Models}

Suppose that $X$ is a normal variety and $V$ is a subvariety.  In order to study how $V$-pseudo-effective divisors behave under birational pull-backs, we need to be careful about how $V$ intersects the exceptional centers of the map.  The most general situation is the following.

\begin{defn}
Let $X$ be a normal variety and $V$ a subvariety of $X$.  Suppose that $\phi: Y \to X$ is a birational map and that $W$ is a subvariety of $Y$ such that the induced map $\phi|_{W}: W \to V$ is generically finite.  We say that $(Y,W)$ or $\phi: (Y,W) \to (X,V)$ is an admissible model for $(X,V)$.  When both $Y$ and $W$ are smooth, we say that $(Y,W)$ is a smooth admissible model.
\end{defn}

The disadvantage of admissible models is that in many circumstances we need to keep track of the degree of $\phi|_{W}$.  Since we want to focus on the birational geometry of $V$, we will usually restrict ourselves to the following situation.

\begin{defn}
Let $X$ be a normal variety and $V$ a subvariety not contained in $\Sing(X)$.  Suppose that $\phi: \widetilde{X} \to X$ is a birational map from a normal variety $\widetilde{X}$ such that $V$ is not contained in any $\phi$-exceptional center.  Let $\widetilde{V}$ denote the strict transform of $V$.  We say that $(\widetilde{X},\widetilde{V})$ or $\phi: \widetilde{X} \to X$ is a $V$-birational model for $(X,V)$.  When both $\widetilde{X}$ and $\widetilde{V}$ are smooth, we say that $(\widetilde{X},\widetilde{V})$ is a smooth $V$-birational model.
\end{defn}

Suppose that $V$ is a subvariety not contained in $\Sing(X)$ and $\phi: (Y,W) \to (X,V)$ is an admissible model.  By Proposition \ref{restrictedbaselocusinvariance}, the pull-back of a $V$-pseudo-effective divisor under $\phi$ is $W$-pseudo-effective.  If $\phi$ is a $V$-birational model, then more is true:

\begin{prop} \label{vbigconebirational}
Let $X$ be a normal variety and $V$ a subvariety not contained in $\Sing(X)$.  Suppose that $\phi: \widetilde{X} \to X$ is a $V$-birational model.  If $L$ is a $V$-big divisor then $\phi^{*}L$ is a $\widetilde{V}$-big divisor.
\end{prop}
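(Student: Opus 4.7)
The plan is to construct an ample divisor $H$ on $\widetilde X$ together with an effective $\mathbb R$-divisor $\mathbb R$-linearly equivalent to $\phi^*L - H$ whose support avoids $\widetilde V$; once produced, this gives $\widetilde V \not\subset \mathbf{B}_{\mathbb{R}}(\phi^*L - H) \supset \mathbf{B}_+(\phi^*L)$ and hence the $\widetilde V$-bigness of $\phi^*L$.

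First I would unwind the hypothesis on $L$. Since $V \not\subset \mathbf{B}_+(L)$, by \cite{elmnp05} Corollary 1.6 there is an ample divisor $A$ on $X$ with $V \not\subset \mathbf{B}_{\mathbb{R}}(L-A)$, so I may choose an effective $D \sim_{\mathbb{R}} L - A$ with $V \not\subset \Supp(D)$.  Because $V$ is not contained in any $\phi$-exceptional center, the generic point of $\widetilde V$ lies in the isomorphism locus of $\phi$ and maps to the generic point of $V$, which lies outside $\Supp(D)$; it follows that $\phi^*D$ is effective with $\widetilde V \not\subset \Supp(\phi^*D)$, while $\phi^*L \sim_{\mathbb{R}} \phi^*A + \phi^*D$.

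The key remaining step is to show that $\phi^*A$ is itself $\widetilde V$-big.  Granting this, the definition of $\mathbf{B}_+$ furnishes an ample $H$ on $\widetilde X$ and an effective $E \sim_{\mathbb{R}} \phi^*A - H$ with $\widetilde V \not\subset \Supp(E)$, and then $\phi^*L - H \sim_{\mathbb{R}} E + \phi^*D$ is the desired effective representative.  I expect this $\widetilde V$-bigness to be the main obstacle, and I would handle it via Nakamaye's null-locus description of $\mathbf{B}_+$ for big and nef divisors: for any subvariety $W \subset \widetilde X$ not contracted by $\phi$, the projection formula gives $(\phi^*A)^{\dim W} \cdot W = A^{\dim W} \cdot \phi_* W > 0$, so $\mathbf{B}_+(\phi^*A) \subset \mathrm{Exc}(\phi)$.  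Since $\widetilde V$ is the strict transform of a subvariety avoiding every exceptional center, it is not contained in $\mathrm{Exc}(\phi)$, and hence it lies outside $\mathbf{B}_+(\phi^*A)$; the proof then assembles as above.
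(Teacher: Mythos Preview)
Your overall strategy matches the paper's: both arguments reduce to proving that $\phi^{*}A$ is $\widetilde{V}$-big for an ample divisor $A$ on $X$, and both establish this by showing $\mathbf{B}_{+}(\phi^{*}A)$ is contained in the $\phi$-exceptional locus, which by hypothesis does not contain $\widetilde{V}$.  Your reduction via an explicit decomposition $\phi^{*}L \sim_{\mathbb{R}} \phi^{*}A + \phi^{*}D$ is slightly more hands-on than the paper's (which instead observes that $\phi^{*}L$ is $\widetilde{V}$-pseudo-effective and then appeals to openness of the $\widetilde{V}$-big cone), but both are fine.

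There is, however, a genuine technical gap at the key step.  You invoke Nakamaye's description $\mathbf{B}_{+}(\phi^{*}A) = \mathrm{Null}(\phi^{*}A)$ directly on $\widetilde{X}$, but a $V$-birational model is only required to be \emph{normal}, not smooth, and Nakamaye's theorem in its classical form is stated for smooth projective varieties.  Extensions to singular varieties exist but are nontrivial and postdate the setting of this paper, so the appeal is not justified as written.

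The paper circumvents this by first passing to a smooth resolution $\psi: \widetilde{Y} \to \widetilde{X}$ that is an isomorphism away from $\Sing(\widetilde{X})$.  One then checks, via
\[
\psi^{-1}\mathbf{B}_{+}(\phi^{*}H) = \psi^{-1}\mathbf{B}_{\mathbb{R}}((1-\epsilon)\phi^{*}H) = \mathbf{B}_{\mathbb{R}}((1-\epsilon)\psi^{*}\phi^{*}H) \subset \mathbf{B}_{+}(\psi^{*}\phi^{*}H),
\]
that it suffices to bound $\mathbf{B}_{+}$ of the pullback on the \emph{smooth} variety $\widetilde{Y}$, where the containment in $\mathrm{Exc}(\phi\circ\psi)$ is standard.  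Since $\widetilde{V}$ avoids both $\mathrm{Exc}(\phi)$ and $\Sing(\widetilde{X})$, the conclusion follows.  Your argument becomes complete once you insert this resolution step before applying Nakamaye.
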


\begin{proof}
$V$-pseudo-effectiveness of $L$ implies that $\phi^{*}L$ is $\widetilde{V}$-pseudo-effective.  By openness of the $\widetilde{V}$-big cone, it suffices to check that $\phi^{*}H$ is $\widetilde{V}$-big for an ample divisor $H$ on $X$.  Let $\psi: \widetilde{Y} \to \widetilde{X}$ be a smooth model such that $\psi$ is an isomorphism away from $\Sing(\widetilde{X})$.  Note that for some sufficiently small $\epsilon$,
\begin{align*}
\psi^{-1}\mathbf{B}_{+}(\phi^{*}H) & = \psi^{-1}\mathbf{B}_{\mathbb{R}}((1-\epsilon)\phi^{*}H) \textrm{ by \cite{elmnp05} Corollary 1.6}\\
& = \mathbf{B}_{\mathbb{R}}((1-\epsilon)\psi^{*}\phi^{*}H) \\
& \subset \mathbf{B}_{+}(\psi^{*}\phi^{*}H).
\end{align*}
But clearly $\mathbf{B}_{+}(\psi^{*}\phi^{*}H)$ is contained in the $(\phi \circ \psi)$-exceptional locus.  Thus $\mathbf{B}_{+}(\phi^{*}H)$ is contained inside the union of the $\phi$-exceptional locus and $\Sing(\widetilde{X})$.  In particular it does not contain $\widetilde{V}$.
\end{proof}

\subsection{Restricted Volume} \label{restrictedvolumesection}

Just as the volume measures the asymptotic rate of growth of sections, the restricted volume measures the rate of growth of restrictions of sections to a subvariety $V$.  This notion originated in the work of Hacon-M\textsuperscript{c}Kernan and Takayama and is systematically developed in \cite{elmnp06}.

\begin{defn} \label{restrictedvolumedefinition}
Suppose that $X$ is a normal variety, $V$ is a $d$-dimensional subvariety of $X$, and $L$ is a divisor.  We define
\begin{equation*}
H^{0}(X|V,\mathcal{O}_{X}(\lfloor L \rfloor)) := \mathrm{Im} (H^{0}(X,\mathcal{O}_{X}(\lfloor mL \rfloor)) \to H^{0}(V,\mathcal{O}_{V}(\lfloor mL \rfloor)))
\end{equation*}
and $h^{0}(X|V,\mathcal{O}_{X}(\lfloor L \rfloor))$ to be the dimension of this space.
We then define the restricted volume $\vol_{X|V}(L)$ to be
\begin{equation*}
\vol_{X|V}(L) := \limsup_{m \to \infty} \frac {h^{0}(X|V,\mathcal{O}_{X}(\lfloor mL \rfloor))} {m^{d}/d!}.
\end{equation*}
\end{defn}

\begin{rmk}
Although this definition of $\vol_{X|V}$ is formulated differently from that of \cite{elmnp06}, the two definitions agree (whenever the restricted volume is defined in \cite{elmnp06}).  An elementary argument proves that $\vol_{X|V}$ is homogeneous of degree $d$, so that Definition \ref{restrictedvolumedefinition} agrees with the definition in \cite{elmnp06} for $\mathbb{Q}$-divisors.  In particular, $\vol_{X|V}$ is a continuous function on the space of $V$-big $\mathbb{Q}$-divisors.  Using this fact, one readily checks that $\vol_{X|V}$ is continuous on the set of $V$-big $\mathbb{R}$-divisors by perturbing by ample divisors, and thus coincides with the definition of \cite{elmnp06}.
\end{rmk}

As with the other quantities we consider, the restricted volume is a numerical and birational invariant.  More precisely, \cite{elmnp06} Theorem A shows that if $L$ and $L'$ are numerically equivalent $V$-big divisors then $\vol_{X|V}(L) = \vol_{X|V}(L')$.  Furthermore, \cite{elmnp06} Proposition 2.4 proves that the restricted volume remains unchanged upon pulling back to an admissible model.

\subsection{Twisted Linear Series}

It was observed by Iitaka that linear series of the form $|\lfloor mL \rfloor + A|$ play an important role in governing the numerical behavior of $L$.  Due to the presence of the auxiliary divisor $A$, we call these ``twisted'' linear series.  In this section we recall the work of \cite{nakayama04} analyzing the asymptotic behavior of twisted linear series.

\begin{defn} \label{kappasigmadefinition}
Let $X$ be a normal variety, $L$ a pseudo-effective $\mathbb{R}$-divisor, and $A$ any divisor.
If $H^{0}(X,\mathcal{O}_{X}(\lfloor mL + A \rfloor))$ is non-zero for infinitely many values of $m$, we define
\begin{equation*}
\kappa_{\sigma}(L;A) := \max \left\{ k \in \mathbb{Z}_{\geq 0} \left| \limsup_{m \to \infty} \frac{h^{0}(X, \mathcal{O}_{X}(\lfloor mL + A \rfloor))}
{m^{k}} > 0 \right. \right\}.
\end{equation*}
Otherwise we define $\kappa_{\sigma}(L;A) = -\infty$.
The $\sigma$-dimension $\kappa_{\sigma}(X,L)$ is defined to be
\begin{equation*}
\kappa_{\sigma}(L) := \max_{A} \{ \kappa_{\sigma}(L;A) \}.
\end{equation*}
\end{defn}

Note that this maximum will be computed by some sufficiently ample divisor $A$.  Thus, we restrict our attention to the case when $A$ is an ample $\mathbb{Z}$-divisor from now on.

\begin{rmk} \label{ceilingremark} \label{floorremark}
As we increase $m$ the class of the divisor $\lceil mL \rceil - \lfloor mL \rfloor$ is bounded.
Thus if we replace $\lfloor - \rfloor$ by $\lceil - \rceil$ in the definition of $\kappa_{\sigma}(L)$, the
result is unchanged as the difference can be absorbed by the divisor $A$.
\end{rmk}

\begin{rmk}
\cite{nakayama04} asks whether $\kappa_{\sigma}(L)$ coincides with
\begin{itemize}
\item $\kappa_{\sigma}^{-}(L)$, where we replace the $\limsup$ by a $\liminf$.
\item $\kappa_{\sigma}^{+}(L)$, where we replace $>0$ by $< \infty$.
\end{itemize}
The equality of these three notions is a consequence of Theorem \ref{numdimproperties} (\ref{growthprop}).
\end{rmk}

\cite{nakayama04} shows that $\kappa_{\sigma}$ is a birational and numerical invariant.  In fact, since $\kappa_{\sigma}$ is one of the many equivalent definitions of the numerical dimension, it satisfies all of the properties of Theorem \ref{numdimproperties}.  The following key result shows that $\kappa_{\sigma}$ is non-negative for pseudo-effective divisors.

\begin{prop}[\cite{nakayama04}, Corollary V.1.4] \label{twistedsectionexistence}
Let $X$ be a smooth variety of dimension $n$.  Fix a big basepoint free divisor $B$ on $X$.  Then a divisor $L$ is pseudo-effective iff $h^{0}(X, \mathcal{O}_{X}(K_{X} + (n+2)B + \lceil mL \rceil)) > 0$ for every $m \geq 0$.
\end{prop}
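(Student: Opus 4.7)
The plan is to prove the two directions of the biconditional separately.

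For the implication from right to left, I would use a limit argument in $N^1(X)_{\mathbb{R}}$. If for every $m \geq 0$ the space $H^0(X,\mathcal{O}_X(K_X+(n+2)B+\lceil mL \rceil))$ contains a nonzero section, then there is an effective divisor $E_m$ in this class. The numerical class $\tfrac{1}{m}[E_m]$ converges to $[L]$ as $m \to \infty$, since the contributions of $\tfrac{1}{m}K_X$, $\tfrac{1}{m}(n+2)B$, and $\tfrac{1}{m}(\lceil mL \rceil - mL)$ all tend to zero. Because the pseudo-effective cone is closed and contains the classes of effective divisors, $L$ is pseudo-effective.

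For the forward direction I would attack the problem using multiplier ideals together with Nadel vanishing and Castelnuovo--Mumford regularity with respect to the basepoint-free divisor $B$. Fix $m \geq 0$ and set $D_m = \lceil mL \rceil + B$. The divisor $\lceil mL \rceil$ is pseudo-effective (it equals $mL$ plus an effective fractional correction), and $B$ is big, so $D_m$ is big; hence $|kD_m|$ is nonempty for some $k \geq 1$ and I may choose an effective $F_m \in |kD_m|$. Let $\mathcal{J}$ denote the multiplier ideal sheaf $\mathcal{J}(X,\tfrac{1}{k}F_m)$, a nonzero ideal sheaf on the smooth variety $X$. For each integer $j$ with $2 \leq j \leq n+2$, the class $(\lceil mL \rceil + jB) - \tfrac{1}{k}F_m$ is numerically $(j-1)B$, which is nef and big, so Nadel vanishing gives
\[
H^{i}\bigl(X,\mathcal{O}_X(K_X+\lceil mL \rceil + jB)\otimes \mathcal{J}\bigr)=0 \quad\text{for all } i > 0.
\]
Applying this with $j = n+2-i$ for $i = 1, \dots, n$ is precisely the Castelnuovo--Mumford regularity hypothesis for the basepoint-free divisor $B$, which forces the sheaf $\mathcal{O}_X(K_X+\lceil mL\rceil + (n+2)B)\otimes \mathcal{J}$ to be globally generated. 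Since $\mathcal{J}$ is a nonzero ideal sheaf, this sheaf is nontrivial, so it admits a nonzero global section, which injects into $H^0(X,\mathcal{O}_X(K_X+(n+2)B+\lceil mL \rceil))$.

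The main obstacle I anticipate is the careful bookkeeping that pins down the coefficient $n+2$: one copy of $B$ must be absorbed by the Nadel shift so that $(j-1)B$ is nef and big at the extreme $j=2$, while the remaining $n+1$ copies accommodate the Castelnuovo--Mumford regularity requirement on the $n$-dimensional smooth variety $X$. A secondary technical point is whether to work with the multiplier ideal of the single divisor $\tfrac{1}{k}F_m$ or with the asymptotic multiplier ideal $\mathcal{J}(\|D_m\|)$; either works, but the former is cleaner and relies only on the smoothness hypothesis already in force.
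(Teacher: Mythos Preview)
Your argument is correct but follows a genuinely different route from the paper's. The paper (following Nakayama's original argument for very ample $B$) produces a section by constructing an effective divisor $D \equiv (n+1)B + \lceil mL \rceil$ whose multiplier ideal $\mathcal{J}(D)$ has an \emph{isolated point}: one chooses $n^{2}$ general members $B_{1},\dots,B_{n^{2}}\in |B|$ through a general point $x\notin\mathbf{B}_{+}(B)$, sets $D=\sum \tfrac{1}{n}B_{i}+E$ with $E\equiv B+\lceil mL\rceil$ effective and avoiding $x$, and uses bigness of $B$ to ensure the $B_{i}$ meet in the expected dimensions so that $D$ has multiplicity $n$ at $x$ but multiplicity $<1$ nearby. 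A single application of Nadel vanishing (for $H^{1}$, with $K_{X}+(n+2)B+\lceil mL\rceil - D\equiv B$ big and nef) then forces a section surjecting onto the nonzero skyscraper at $x$.

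By contrast, you take an \emph{arbitrary} effective $\mathbb{Q}$-divisor $\tfrac{1}{k}F_{m}$ in the numerical class of $\lceil mL\rceil + B$, note only that its multiplier ideal $\mathcal{J}$ is nonzero, and then invoke Nadel vanishing for all $H^{i}$ together with Castelnuovo--Mumford regularity with respect to $B$ to obtain global generation of $\mathcal{O}_{X}(K_{X}+(n+2)B+\lceil mL\rceil)\otimes\mathcal{J}$. This is cleaner in that it bypasses the isolated-point construction entirely. One point worth making explicit: the regularity step is applied with respect to a basepoint-free divisor $B$ that is big but not assumed ample, whereas the standard reference (e.g.\ Lazarsfeld, \emph{Positivity} I, Theorem~1.8.5) states the result for ample globally generated line bundles. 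The usual inductive proof, cutting by a general member of $|B|$, does go through for any globally generated line bundle, but you should flag this. The paper's approach trades that small technicality for the more hands-on multiplicity construction; your approach trades the construction for a slightly nonstandard form of regularity.
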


\begin{proof}
\cite{nakayama04} Corollary V.1.4 shows a similar statement for $B$ very ample.  We explain how to extend the argument to the case when $B$ is big and basepoint free.  The main point is to show that there is an effective divisor $D \equiv (n+1)B + \lceil mL \rceil$ such that $\mathcal{J}(D)$ has an isolated point.  There is an effective divisor $E \equiv B + \lceil mL \rceil$.  Choose a general point $x$ that does not lie in $\Supp(E) \cup \mathbf{B}_{+}(B)$.  Let $B_{1},\ldots,B_{n^{2}} \in |B|$ be irreducible smooth divisors going through $x$.  Since $B$ is big, by choosing the $B_{i}$ sufficiently general we may ensure the intersections of any collection of at most $n$ of them has  the expected dimension.  Thus $D := \sum \frac{1}{n} B_{i} + E$ has multiplicity $n$ at $x$ and less than $1$ in a neighborhood of $x$.  By \cite{lazarsfeld04} Propositions 9.3.2 and 9.5.13, $\mathcal{J}(D)$ has an isolated point.  The proof then proceeds as in \cite{nakayama04} Corollary V.1.4.
\end{proof}

\section{Divisorial Zariski Decomposition} \label{divzardecomsection}

The divisorial Zariski decomposition is a higher-dimension analogue of the classical Zariski decomposition on surfaces.  It was introduced by \cite{nakayama04} and by \cite{boucksom04} in the analytic setting.

\begin{defn}
Let $X$ be a smooth variety and let $L$ be a pseudo-effective divisor.  Fix an ample divisor $A$ on $X$.  For any prime divisor $\Gamma$ on $X$ we define
\begin{equation*}
\sigma_{\Gamma}(L) = \lim_{\epsilon \to 0^{+}} \inf\{\mathrm{mult}_{\Gamma}(L') | L' \sim_{\mathbb{R}} L + \epsilon A \textrm{ and } L' \geq 0 \}.
\end{equation*}
By \cite{nakayama04} Lemma III.1.5 this definition is independent of the choice of $A$.
\end{defn}

\cite{nakayama04} Lemma III.1.7 shows that for any pseudo-effective divisor $L$ there are only finitely many prime divisors $\Gamma$ with $\sigma_{\Gamma}(L) > 0$.  Thus, we can define:

\begin{defn} \label{divzardecdef}
Let $X$ be a smooth variety, $L$ a pseudo-effective divisor.
We define:
\begin{equation*}
N_{\sigma}(L) = \sum \sigma_{\Gamma}(L) \Gamma \qquad \qquad P_{\sigma}(L) = L - N_{\sigma}(L)
\end{equation*}
The decomposition $L = N_{\sigma}(L) + P_{\sigma}(L)$ is called the divisorial Zariski
decomposition of $L$.
\end{defn}

The following proposition records the basic properties of the divisorial Zariski decomposition.  The key point is that $P_{\sigma}(L)$ captures all of the interesting geometric information about $L$.

\begin{prop}[\cite{nakayama04}, Lemma III.1.4, Corollary III.1.9, Theorem V.1.3]
Let $X$ be a smooth variety, $L$ a pseudo-effective divisor.
\begin{enumerate}
\item $N_{\sigma}(L)$ depends only on the numerical class of $L$.
\item $N_{\sigma}(L) \geq 0$ and $\kappa(N_{\sigma}(L)) = 0$.
\item $\Supp(N_{\sigma}(L))$ is precisely the divisorial part of $\mathbf{B}_{-}(L)$.
\item $H^{0}(X,\mathcal{O}_{X}(\lfloor mP_{\sigma}(L) \rfloor)) \to H^{0}(X,\mathcal{O}_{X}(\lfloor mL \rfloor))$ is an isomorphism for every $m \geq 0$.
\end{enumerate}
\end{prop}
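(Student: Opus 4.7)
The plan is to prove the four statements using a common technical device: any ample $\mathbb{R}$-divisor $H$ is $\mathbb{R}$-linearly equivalent to an effective divisor whose support avoids any prescribed prime $\Gamma$. One obtains this by writing $H$ as a positive $\mathbb{R}$-combination of ample $\mathbb{Q}$-divisors, passing to very ample multiples of each summand, and choosing general members via Bertini.

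For (1), if $L \equiv L'$, set $N := L' - L \equiv 0$. Then $N + \epsilon A$ is ample for every $\epsilon > 0$ and admits an effective representative $E_{\epsilon}$ avoiding $\Gamma$. Given any effective $M \sim_{\mathbb{R}} L + \delta A$, the sum $M + E_{\epsilon}$ is an effective representative of $L' + (\delta + \epsilon) A$ with the same multiplicity along $\Gamma$ as $M$. Taking infima and sending $\delta, \epsilon \to 0^{+}$ yields $\sigma_{\Gamma}(L') \leq \sigma_{\Gamma}(L)$; symmetry gives equality, so both $N_{\sigma}$ and $P_{\sigma}$ are numerical invariants. The non-negativity $N_{\sigma}(L) \geq 0$ in (2) is immediate from the definition of $\sigma_{\Gamma}$.

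The remaining parts of (2) and the whole of (4) rest on the key lemma $\sigma_{\Gamma}(P_{\sigma}(L)) = 0$ for every prime $\Gamma$. Granting this, if $D \sim m N_{\sigma}(L)$ is effective and $R \sim_{\mathbb{R}} m P_{\sigma}(L) + \eta A$ is an effective representative with $\mathrm{mult}_{\Gamma}(R) < \eta$ (which exists by the lemma applied to $mP_{\sigma}(L)$), then $D + R$ is an effective representative of $mL + \eta A$, so the infimum definition forces $\mathrm{mult}_{\Gamma}(D) + \mathrm{mult}_{\Gamma}(R) \geq m\sigma_{\Gamma}(L) - O(\eta)$; sending $\eta \to 0$ gives $\mathrm{mult}_{\Gamma}(D) \geq m \sigma_{\Gamma}(L)$ for every $\Gamma$, hence $D \geq m N_{\sigma}(L)$, and since $D \sim m N_{\sigma}(L)$ both are effective this forces $D = m N_{\sigma}(L)$. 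Thus $h^{0}(X,\mathcal{O}_{X}(m N_{\sigma}(L)))=1$ and $\kappa(N_{\sigma}(L))=0$, proving (2). The same inequality applied to an effective $D \sim \lfloor mL \rfloor$ yields $D \geq \lfloor m N_{\sigma}(L) \rfloor$, producing the desired section of $\lfloor m P_{\sigma}(L) \rfloor$ and the isomorphism in (4) (after routine floor bookkeeping, using that $m N_{\sigma}(L)$ is integral along its support for appropriate $m$). The lemma itself is established by exploiting the finiteness of $\{\Gamma : \sigma_{\Gamma}(L) > 0\}$ (Nakayama III.1.7): for each $\eta > 0$, a simultaneous-approximation argument over these finitely many primes produces an effective $L_{\eta} \sim_{\mathbb{R}} L + \eta A$ that is $\geq N_{\sigma}(L)$ coefficient-wise yet has $\mathrm{mult}_{\Gamma}(L_{\eta}) \leq \sigma_{\Gamma}(L) + \eta$ on each such $\Gamma$, after which $L_{\eta} - N_{\sigma}(L)$ is the required small-multiplicity representative of $P_{\sigma}(L) + \eta A$.

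Part (3) is the main obstacle. The forward direction is immediate, since $\sigma_{\Gamma}(L) > 0$ forces every effective representative of $L + \epsilon A$ (for small $\epsilon$) to contain $\Gamma$, giving $\Gamma \subset \mathbf{B}_{\mathbb{R}}(L + \epsilon A) \subset \mathbf{B}_{-}(L)$. The converse requires showing that $\sigma_{\Gamma}(L) = 0$ produces an effective representative of $L + \epsilon A$ whose multiplicity along $\Gamma$ is exactly zero for some $\epsilon$, not merely arbitrarily small: the $\mathbb{R}$-infimum defining $\sigma_{\Gamma}$ need not be attained, and in particular the integer-valued sequence $\mathrm{mult}_{\Gamma} |\lceil mL \rceil + A|$ featured in the characterization $\mathbf{B}_{-}(L) = \bigcup_{m} \mathrm{Bs}(\lceil mL \rceil + A)$ of Theorem V.1.3 could in principle remain positive while its asymptotic rate vanishes. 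I would resolve this by exploiting the full freedom in the definition of $\mathbf{B}_{-}(L)$ to replace $A$ by $jA$ for arbitrarily large $j$, combined with a Seshadri-constant estimate: for sufficiently large $j$, the twisted integer multiplicity $\mathrm{mult}_{\Gamma} |\lceil mL \rceil + jA|$ becomes strictly smaller than $m$ for some $m$ and hence, being a non-negative integer whose normalization is less than $1/m$, must vanish. This simultaneous control of $m$ and the twist $jA$ is where I expect the bulk of the technical work.
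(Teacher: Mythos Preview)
The paper does not prove this proposition; it is stated with attribution to Nakayama and no proof is given. Your outline for (1), (2), and (4) is essentially Nakayama's argument and is correct in substance, though the parenthetical ``using that $mN_{\sigma}(L)$ is integral along its support for appropriate $m$'' is false in general (the $\sigma_{\Gamma}(L)$ are real numbers) and unnecessary: once you know $\mathrm{mult}_{\Gamma}(D) + \{m\,\mathrm{mult}_{\Gamma}(L)\} \geq m\sigma_{\Gamma}(L)$, the fact that $\mathrm{mult}_{\Gamma}(D)$ and $\lfloor m\,\mathrm{mult}_{\Gamma}(L)\rfloor$ are both integers forces $\mathrm{mult}_{\Gamma}(D) \geq \lfloor m\,\mathrm{mult}_{\Gamma}(L)\rfloor - \lfloor m\,\mathrm{mult}_{\Gamma}(P_{\sigma}(L))\rfloor$ directly.

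Your treatment of the converse in (3) is where there is a real problem. You correctly identify that the infimum defining $\sigma_{\Gamma}$ need not be attained, but then you propose to work through the characterization $\mathbf{B}_{-}(L)=\bigcup_{m}\mathrm{Bs}(\lceil mL\rceil+A)$ and a Seshadri-type estimate; the sketch you give (``being a non-negative integer whose normalization is less than $1/m$, must vanish'') does not make sense as written, since an integer multiplicity smaller than $m$ is not forced to be zero. The actual argument is much simpler and uses only the definition of $\mathbf{B}_{-}$ together with openness of the ample cone. First observe that the function $f(\epsilon):=\inf\{\mathrm{mult}_{\Gamma}(D):D\sim_{\mathbb{R}}L+\epsilon A,\ D\geq 0\}$ is non-increasing in $\epsilon$ (add an effective representative of the extra ample avoiding $\Gamma$), so $\sigma_{\Gamma}(L)=0$ forces $f(\epsilon)=0$ for every $\epsilon>0$. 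Now fix $\epsilon>0$ and pick $D\sim_{\mathbb{R}}L+\tfrac{\epsilon}{2}A$ effective with $\delta':=\mathrm{mult}_{\Gamma}(D)$ so small that $\tfrac{\epsilon}{2}A+\delta'\Gamma$ is still ample. Choose an effective $F\sim_{\mathbb{R}}\tfrac{\epsilon}{2}A+\delta'\Gamma$ avoiding $\Gamma$; then $(D-\delta'\Gamma)+F\sim_{\mathbb{R}}L+\epsilon A$ is effective and does not contain $\Gamma$, so $\Gamma\not\subset\mathbf{B}_{\mathbb{R}}(L+\epsilon A)$. Since $\epsilon$ was arbitrary, $\Gamma\not\subset\mathbf{B}_{-}(L)$. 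No Seshadri constants or integer base loci are needed.
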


Note that $N_{\sigma}(L) = 0$ iff $\mathbf{B}_{-}(L)$ has no divisorial components.  This simple observation leads to a different perspective on the divisorial Zariski decomposition.

\begin{defn}
Let $X$ be a smooth variety.  The movable cone $\overline{Mov}^{1}(X) \subset CD(X)$ is the cone consisting of the classes of all pseudo-effective divisors $L$ such that $\mathbf{B}_{-}(L)$ has no divisorial components.
\end{defn}

The positive part $P_{\sigma}(L)$ of the divisorial Zariski decomposition can be understood as a ``projection'' of $L$ onto the movable cone.  We will need a slightly modified version of \cite{nakayama04} Proposition III.1.14 that takes into account a subvariety $V$.

\begin{prop} \label{vbignessandpsigma}
Let $X$ be smooth, $V$ a subvariety, and $L$ a $V$-pseudo-effective divisor.  If $M$ is a movable divisor then $L \geq_{V} M$ iff $P_{\sigma}(L) \geq_{V} M$.  Thus $L - M$ is $V$-big (resp.~$V$-pseudo-effective) iff $P_{\sigma}(L)-M$ is $V$-big (resp.~$V$-pseudo-effective).
\end{prop}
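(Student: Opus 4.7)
The plan is to first prove the core equivalence $L \geq_V M \Longleftrightarrow P_\sigma(L) \geq_V M$, and then deduce the $V$-big and $V$-pseudo-effective statements by combining Lemma \ref{vbignesscondition} with the openness/closedness of the respective cones. The $(\Leftarrow)$ direction of the core statement is essentially formal: if $P_\sigma(L) - M \sim_\mathbb{R} D$ with $D$ effective and $V \not\subset \Supp(D)$, then $L - M \sim_\mathbb{R} D + N_\sigma(L)$, and the support of this representative avoids $V$ because $\Supp(N_\sigma(L))$ is the divisorial part of $\mathbf{B}_-(L)$ (which does not contain $V$ by the $V$-pseudo-effectivity of $L$) and $V$ is irreducible.

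For the harder direction $(\Rightarrow)$, the key fact is the inequality $N_\sigma(L) \leq N_\sigma(L - M)$, which uses movability of $M$ (so $\sigma_\Gamma(M) = 0$ for every prime $\Gamma$) together with the subadditivity $\sigma_\Gamma(L) \leq \sigma_\Gamma(L - M) + \sigma_\Gamma(M)$ that is immediate from the infimum definition. Given an effective $V$-avoiding representative $D$ of $L - M$, the infimum definition also yields $N_\sigma(L - M) \leq D$ componentwise: for each prime $\Gamma$ and each $\epsilon > 0$, one may pick an effective representative of the perturbation $\epsilon A$ with zero multiplicity along $\Gamma$ (using ampleness of $A$), witnessing $\sigma_\Gamma(L - M) \leq \mathrm{mult}_\Gamma(D)$. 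Combining, $D - N_\sigma(L) \geq 0$ is an effective $\mathbb{R}$-linear representative of $P_\sigma(L) - M$ whose support is contained in $\Supp(D)$, and thus avoids $V$.

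To bootstrap to the $V$-big statement: given that $L - M$ is $V$-big, openness of the $V$-big cone supplies an ample $A$ and $\epsilon > 0$ with $L - M - \epsilon A$ still $V$-big, and then $M + \epsilon A$ remains movable; Lemma \ref{vbignesscondition} and the core equivalence applied to $L$ and $M + \epsilon A$ produce an effective $V$-avoiding representative of $P_\sigma(L) - M - \epsilon A$, exhibiting $P_\sigma(L) - M$ as an effective $V$-avoiding divisor plus an ample one, hence $V$-big. For the $V$-pseudo-effective statement, one instead applies the $V$-big case to $L + \delta H$ and $M$ for each $\delta > 0$ and ample $H$, and lets $\delta \to 0^+$: lower semicontinuity of $N_\sigma$ (from \cite{nakayama04}) combined with the subadditivity bound $N_\sigma(L + \delta H) \leq N_\sigma(L)$ gives $P_\sigma(L + \delta H) \to P_\sigma(L)$ in numerical class, and closedness of the $V$-pseudo-effective cone finishes the job. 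The reverse implications are immediate from $L - M = (P_\sigma(L) - M) + N_\sigma(L)$ together with the $V$-pseudo-effectivity of $N_\sigma(L)$. The only delicate step is securing the componentwise bound $N_\sigma(L - M) \leq D$; once in hand, the rest is essentially bookkeeping with cones of positivity.
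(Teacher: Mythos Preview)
Your proof is correct and follows essentially the same route as the paper's: both hinge on the bound $N_\sigma(L) \leq E$ when $L = M + E$ with $M$ movable and $E \geq_V 0$, then bootstrap to the $V$-big case via Lemma~\ref{vbignesscondition} and pass to the $V$-pseudo-effective case by a limit. The paper simply cites \cite{nakayama04} Proposition~III.1.14 for that bound, whereas you re-derive it through the chain $N_\sigma(L) \leq N_\sigma(L-M) + N_\sigma(M) = N_\sigma(L-M) \leq D$.

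One cosmetic point worth tightening: in this paper $D \geq_V 0$ means $D$ is \emph{literally} effective with $V \not\subset \Supp(D)$, not merely $\sim_\mathbb{R}$ to such a divisor. Your core argument works verbatim for this literal reading by taking $D = L - M$ (resp.\ $D = P_\sigma(L) - M$), so the extra $\sim_\mathbb{R}$ in your write-up is harmless. In the $V$-big step, however, applying the core equivalence to ``$L$ and $M + \epsilon A$'' requires $L - M - \epsilon A$ itself to be $\geq_V 0$, which Lemma~\ref{vbignesscondition} does not give; the paper handles this by applying the equivalence instead to the movable divisor $L - D \sim_\mathbb{R} M + A$ (where $D \sim_\mathbb{R} L - M - A$ with $D \geq_V 0$), so that $L \geq_V (L - D)$ holds on the nose. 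Your version is equivalent once this substitution is made.
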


\begin{proof}
First suppose that $P_{\sigma}(L) \geq_{V} M$.  Since $L$ is $V$-pseudo-effective, no component of $N_{\sigma}(L)$ contains $V$.  Thus $L \geq_{V} M$.  Conversely, suppose $L = M + E$ with $E \geq_{V} 0$.  Since $M$ is movable, $N_{\sigma}(L) \leq E$ by \cite{nakayama04} Proposition III.1.14.  Thus $E - N_{\sigma}(L)$ is still effective and does not contain $V$ in its support, showing that $P_{\sigma}(L) \geq_{V} M$.

Suppose now that $L-M$ is $V$-big.  Choose an ample divisor $A$ sufficiently small so that $L-M-A$ is $V$-big.  By Lemma \ref{vbignesscondition}, there is some $D \sim_{\mathbb{R}} L-M-A$ such that $D \geq_{V} 0$.  Applying the first step to $L-D$ shows that $P_{\sigma}(L) - L + D \equiv P_{\sigma} - M - A$ is $V$-pseudo-effective, so that $P_{\sigma}(L) - M$ is $V$-big.  The converse is straightforward.  The analogous statement for $V$-pseudo-effectiveness follows by taking limits.
\end{proof}

\subsection{Birational Properties}

Although the divisorial Zariski decomposition is not a birational invariant, its birational behavior is relatively nice.

\begin{prop}[\cite{nakayama04}, Theorem III.5.16]
Let $\phi: Y \to X$ be a birational map of smooth varieties and let $L$ be a pseudo-effective divisor on $X$.  Then $N_{\sigma}(\phi^{*}L) - \phi^{*}N_{\sigma}(L)$ is effective and $\phi$-exceptional.
\end{prop}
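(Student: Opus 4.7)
My plan is to compare, prime divisor by prime divisor on $Y$, the coefficients of $N_{\sigma}(\phi^{*}L)$ and $\phi^{*}N_{\sigma}(L)$, splitting cases according to whether the prime is $\phi$-exceptional. By resolving indeterminacies we may assume $\phi \colon Y \to X$ is a birational morphism of smooth projective varieties. Fix ample divisors $A$ on $X$ and $H$ on $Y$; by \cite{nakayama04} Lemma III.1.5 the choice of ample perturbation is immaterial for computing $\sigma_{\Gamma}$.

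First I would handle a non-exceptional prime $\Gamma \subset Y$ with image $\Gamma' = \phi(\Gamma)$, aiming to prove $\sigma_{\Gamma}(\phi^{*}L) = \sigma_{\Gamma'}(L)$, which is exactly the coefficient of $\Gamma$ in $\phi^{*}N_{\sigma}(L)$. The bound $\sigma_{\Gamma}(\phi^{*}L) \leq \sigma_{\Gamma'}(L)$ comes from pulling back a near-optimal effective $D \sim_{\mathbb{R}} L + \epsilon A$: we have $\mathrm{mult}_{\Gamma}(\phi^{*}D) = \mathrm{mult}_{\Gamma'}(D)$, and using Kodaira's lemma to write $\phi^{*}A \sim_{\mathbb{R}} H' + E$ with $H'$ ample on $Y$ and $E \geq 0$, we can convert the big-and-nef perturbation $\epsilon \phi^{*}A$ into the ample perturbation used in the definition of $\sigma_{\Gamma}(\phi^{*}L)$, absorbing the negligible $\epsilon E$-contribution. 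The reverse inequality follows symmetrically by pushforward: given effective $M \sim_{\mathbb{R}} \phi^{*}L + \epsilon H$ on $Y$, the divisor $\phi_{*}M \sim_{\mathbb{R}} L + \epsilon \phi_{*}H$ is effective with $\mathrm{mult}_{\Gamma'}(\phi_{*}M) = \mathrm{mult}_{\Gamma}(M)$, and $\phi_{*}H$ is dominated by a multiple of $A$ plus an effective divisor.

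Next I would handle a $\phi$-exceptional prime $E \subset Y$, showing $\sigma_{E}(\phi^{*}L) \geq \mathrm{mult}_{E}(\phi^{*}N_{\sigma}(L))$. Since only finitely many primes $\Gamma'$ on $X$ have $\sigma_{\Gamma'}(L) > 0$, the coefficient $\mathrm{mult}_{E}(\phi^{*}N_{\sigma}(L))$ is the finite sum $\sum_{\Gamma'} \sigma_{\Gamma'}(L) \cdot \mathrm{mult}_{E}(\phi^{*}\Gamma')$. The key is to produce, for each $\epsilon > 0$, a single effective $D \sim_{\mathbb{R}} L + \epsilon A$ whose multiplicities along every relevant $\Gamma'$ simultaneously approximate the respective infima $\sigma_{\Gamma'}(L)$; this is possible by averaging near-optimal divisors for each individual prime. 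Pulling back, $\phi^{*}D$ is effective with $\mathrm{mult}_{E}(\phi^{*}D) \geq \sum_{\Gamma'} \mathrm{mult}_{\Gamma'}(D) \cdot \mathrm{mult}_{E}(\phi^{*}\Gamma')$, and the same Kodaira-lemma conversion as in the previous step, followed by $\epsilon \to 0$, yields the desired bound.

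Combining the two steps, the non-exceptional coefficients of $N_{\sigma}(\phi^{*}L) - \phi^{*}N_{\sigma}(L)$ vanish while the exceptional ones are nonnegative, so the difference is effective and $\phi$-exceptional. I expect the main obstacle to lie in the second step, where one must simultaneously approximate the infima defining $\sigma_{\Gamma'}(L)$ for several primes using a common test divisor, and where one must carefully transfer between ample perturbations on $Y$ and ample perturbations on $X$; both issues are handled by Kodaira-type decompositions together with the convexity of the set of effective divisors in a fixed $\mathbb{R}$-linear equivalence class.
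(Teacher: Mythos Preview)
The paper does not supply a proof of this proposition; it is quoted from \cite{nakayama04} without argument, so I evaluate your proposal on its own merits.

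Your first step (non-exceptional primes) is fine. The second step, however, has the inequality running the wrong way. You want to show $\sigma_{E}(\phi^{*}L) \geq \mathrm{mult}_{E}(\phi^{*}N_{\sigma}(L))$, which is a \emph{lower} bound on an infimum: that requires bounding the multiplicity along $E$ of \emph{every} effective test divisor. What you do instead is exhibit a single test divisor $\phi^{*}D$; that only yields the upper bound $\sigma_{E}(\phi^{*}L+\epsilon\phi^{*}A) \leq \mathrm{mult}_{E}(\phi^{*}D)$. Combined with your observation that $\mathrm{mult}_{E}(\phi^{*}D) \geq \sum_{\Gamma'} \mathrm{mult}_{\Gamma'}(D)\,\mathrm{mult}_{E}(\phi^{*}\Gamma') \approx \mathrm{mult}_{E}(\phi^{*}N_{\sigma}(L))$, you have two inequalities pointing in opposite directions, and nothing follows. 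The averaging trick for simultaneous near-optimality is irrelevant here: no choice of $D$ on $X$ can, by pullback alone, control divisors on $Y$ that are not pullbacks.

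The missing ingredient is a comparison $M \geq \phi^{*}\phi_{*}M$ for an \emph{arbitrary} effective $M \sim_{\mathbb{R}} \phi^{*}(L+\epsilon A)$. Set $D = \phi_{*}M$; then $D \geq 0$, $D \sim_{\mathbb{R}} L+\epsilon A$, and $M - \phi^{*}D$ is $\phi$-exceptional with $M - \phi^{*}D \equiv 0$. The negativity lemma then forces $M = \phi^{*}D$, so
\[
\mathrm{mult}_{E}(M) = \mathrm{mult}_{E}(\phi^{*}D) = \sum_{\Gamma'} \mathrm{mult}_{\Gamma'}(D)\,\mathrm{mult}_{E}(\phi^{*}\Gamma') \geq \sum_{\Gamma'} \sigma_{\Gamma'}(L+\epsilon A)\,\mathrm{mult}_{E}(\phi^{*}\Gamma').
\]
Taking the infimum over $M$ and letting $\epsilon \to 0$ gives the desired lower bound. (Perturbing by the big divisor $\phi^{*}A$ rather than an ample on $Y$ computes the same limit, by the same Kodaira-type conversion you already use in step one.)
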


$L$ is said to admit a Zariski decomposition if there is a birational map $\phi: Y \to X$ from a smooth variety $Y$ such that $P_{\sigma}(\phi^{*}L)$ is nef.
An important example due to Nakayama (\cite{nakayama04}, Section IV.2) shows that Zariski decompositions do not always exist.  Nevertheless, there is a sense in which the positive part $P_{\sigma}(\phi^{*}L)$ becomes ``more nef'' as we pass to higher models $\phi: Y \to X$.  We will give two versions of this fact.  In the first, we consider a $V$-big divisor $L$. 

\begin{prop} \label{psigmaapproachesnef}
Let $X$ be smooth, $V$ a subvariety, and $L$ a $V$-big divisor with $L \geq_{V} 0$.  Then there is an effective divisor $G$ so that for any sufficiently large $m$ there is a model $\phi_{m}: \widetilde{X}_{m} \to X$ centered in $\mathbf{B}_{+}(L)$ and a big and nef divisor $N_{m}$ on $\widetilde{X}_{m}$ with
\begin{equation*}
N_{m} \leq_{\widetilde{V}_{m}} P_{\sigma}(\phi_{m}^{*}L) \leq_{\widetilde{V}_{m}} N_{m}+\frac{1}{m}\phi_{m}^{*}G
\end{equation*}
where $\widetilde{V}_{m}$ denotes the strict transform of $V$ on $\widetilde{X}_{m}$.
\end{prop}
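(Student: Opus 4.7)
The plan is to construct $\widetilde{X}_m$ as a smooth $V$-birational log resolution of the linear system $|\lfloor mL\rfloor - A|$ for a fixed ample $\mathbb{Z}$-divisor $A$, chosen large enough that $|\lfloor mL\rfloor - A|$ is non-empty for all $m \gg 0$ (possible since $L$ is big) and that $\mathrm{Bs}|\lfloor mL\rfloor - A| \subseteq \mathbf{B}(L - \tfrac{1}{m}A) \subseteq \mathbf{B}_{+}(L)$ for every large $m$. Since $V \not\subset \mathbf{B}_{+}(L)$ by $V$-bigness, this is a legitimate smooth $V$-birational model centered in $\mathbf{B}_{+}(L)$. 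Writing $\phi_m^{*}(\lfloor mL\rfloor - A) = M_m + F_m$ with $M_m$ basepoint-free, I set
\[
N_m := \tfrac{1}{m}\bigl(M_m + \phi_m^{*}A\bigr),
\]
which is nef as a sum of the basepoint-free $\tfrac{1}{m}M_m$ and the nef $\tfrac{1}{m}\phi_m^{*}A$, and is big for $m \gg 0$ by standard volume estimates.

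The key algebraic identity is obtained by combining this decomposition with $\phi_m^{*}L = \phi_m^{*}P_{\sigma}(L) + \phi_m^{*}N_{\sigma}(L)$ and Nakayama's birational comparison (\cite{nakayama04}, Theorem III.5.16), which states that $N_{\sigma}(\phi_m^{*}L) - \phi_m^{*}N_{\sigma}(L)$ is effective and $\phi_m$-exceptional. A direct calculation yields
\[
m\bigl(P_{\sigma}(\phi_m^{*}L) - N_m\bigr) = F_m + \phi_m^{*}R'_m - m N_{\sigma}(\phi_m^{*}L),
\]
where $R'_m := mL - \lfloor mL\rfloor \geq 0$ has support in $\mathrm{Supp}(L)$ with multiplicities in $[0,1)$. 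The chain of inequalities in the proposition therefore reduces to sandwiching the right-hand side between $0$ and $\phi_m^{*}G$ modulo $\widetilde{V}_m$ for a fixed effective $G$ on $X$.

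The lower bound is established by bounding the multiplicities of $F_m$ from below: for every prime divisor $\Gamma \neq \widetilde{V}_m$ on $\widetilde{X}_m$, sub-additivity gives $\mathrm{mult}_\Gamma F_m \geq \sigma_\Gamma\bigl(\phi_m^{*}(\lfloor mL\rfloor - A)\bigr)$, and the monotonicity $\sigma_\Gamma(L) \leq \sigma_\Gamma(L - A)$ (which follows from the sub-additivity of $\sigma$ together with $\sigma_\Gamma(A) = 0$ for ample $A$) then forces $\mathrm{mult}_\Gamma F_m + \mathrm{mult}_\Gamma \phi_m^{*}R'_m \geq m\sigma_\Gamma(\phi_m^{*}L)$. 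The upper bound requires a quantitative counterpart: a fixed effective $G_0$ on $X$ with $\mathrm{mult}_\Gamma F_m \leq m\sigma_\Gamma(\phi_m^{*}L) + \mathrm{mult}_\Gamma \phi_m^{*}G_0$ for all such $\Gamma$ and all $m \gg 0$. On non-exceptional primes this is the $O(1)$ rate of convergence of the asymptotic order of vanishing under twisting by the fixed ample $A$, which follows from Kodaira-type constructions of near-minimal representatives. On exceptional primes --- which by construction lie over $\mathbf{B}_{+}(L)$ --- one chooses $G_0$ to be an effective divisor whose support contains $\mathbf{B}_{+}(L)$ with sufficient multiplicity so that $\phi_m^{*}G_0$ dominates every exceptional discrepancy uniformly in $m$. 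Setting $G := \sum_{\Gamma_0 \subseteq \mathrm{Supp}(L)}\Gamma_0 + G_0$ then completes the proof. The main obstacle is establishing this uniform quantitative bound at exceptional divisors in a form that is independent of the varying log resolutions $\phi_m$.
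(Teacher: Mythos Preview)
Your setup and the lower bound $N_m \leq_{\widetilde V_m} P_\sigma(\phi_m^{*}L)$ are fine; that direction is essentially Proposition~\ref{vbignessandpsigma} applied to the movable divisor $N_m$ and the effective $\widetilde V_m$-avoiding divisor $\frac{1}{m}(F_m+\phi_m^{*}R'_m)$. The problem is the upper bound, and you have correctly identified it as ``the main obstacle'' --- but you have not actually overcome it.

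What you need is a uniform inequality of the type
\[
\mathrm{mult}_\Gamma F_m \leq m\,\sigma_\Gamma(\phi_m^{*}L) + \mathrm{mult}_\Gamma \phi_m^{*}G_0
\]
valid for \emph{every} prime $\Gamma$ on \emph{every} model $\widetilde X_m$, with a single $G_0$ on $X$. Your suggestion for exceptional $\Gamma$ --- take $G_0$ supported on $\mathbf B_+(L)$ with ``sufficient multiplicity'' --- does not work: both $\mathrm{mult}_\Gamma F_m - m\sigma_\Gamma(\phi_m^{*}L)$ and $\mathrm{mult}_\Gamma \phi_m^{*}G_0$ scale with the discrepancy data of the valuation $\Gamma$, and there is no a priori reason the former is controlled by the latter uniformly over the infinitely many valuations appearing as $m$ varies. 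No finite choice of multiplicity on $X$ can be justified by the argument you sketch.

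The paper handles exactly this point via asymptotic multiplier ideals. It proves (Lemma~\ref{multiplieridealbaselocuscomparison}, using Nadel vanishing and Castelnuovo--Mumford regularity) that $\mathcal J(\Vert mL\Vert)\otimes\mathcal O_X(-G)\subset \mathfrak b(|\lfloor mL\rfloor|)$ for a fixed effective $G$. On the resolution this reads $F_m + \phi_m^{*}G \geq E_m$, where $-E_m$ and $-F_m$ principalize the base ideal and the multiplier ideal respectively. Combined with the known inequality $N_\sigma(m\phi_m^{*}L)\geq F_m$ from \cite{elmnp05}, this immediately gives the desired upper bound. The multiplier ideal is precisely the device that makes the comparison uniform in $m$ and over all exceptional valuations; your ``Kodaira-type constructions of near-minimal representatives'' is gesturing at this but is not a proof, and the claim about dominating exceptional discrepancies by a fixed $G_0$ is unsubstantiated.
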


The second version handles $V$-pseudo-effective divisors $L$.  Although the statement is slightly more technical, the additional flexibility will be useful later on.

\begin{prop} \label{psigmaapproachesnef2}
Let $X$ be smooth and let $L$ be a pseudo-effective divisor.  There are birational maps $\phi_{m}: \widetilde{X}_{m} \to X$ centered in $\mathbf{B}_{-}(L)$, an ample $\mathbb{Z}$-divisor $A$, and an effective divisor $G$ satisfying the following condition.  Suppose that $V$ is a subvariety of $X$ not contained in $\mathbf{B}_{-}(L)$.  Then there is some $G_{V} \sim_{\mathbb{Q}} G$ and for every $m$ there is an effective divisor $D_{m} \sim \lceil mL \rceil + A$ and a big and nef divisor $M_{m,D_{m}}$ such that
\begin{equation*}
M_{m,D_{m}} \leq_{\widetilde{V}_{m}} P_{\sigma}(\phi_{m}^{*}D_{m}) \leq_{\widetilde{V}_{m}} M_{m,D_{m}} + \phi_{m}^{*}G_{V}
\end{equation*}
where $\widetilde{V}_{m}$ denotes the strict transform of $V$ on $\widetilde{X}_{m}$.  We may furthermore assume that $A + D$ is ample for every $D$ supported on $\Supp(L)$ with coefficients in the set $[-3,3]$.
\end{prop}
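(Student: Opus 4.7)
The plan is to reduce to Proposition \ref{psigmaapproachesnef} by applying that result to the $V$-big divisors $D_m$ obtained from the linear systems $|\lceil mL\rceil + A|$, and to extract a uniform-in-$m$ error bound by setting the auxiliary index in Proposition \ref{psigmaapproachesnef} equal to $m$, thereby cancelling the linear growth of the Kodaira-type error against the $\tfrac{1}{m}$ factor.

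First I would choose the ample $\mathbb{Z}$-divisor $A$ positive enough to satisfy simultaneously: (a) $h^0(X,\lceil mL\rceil + A) > 0$ for every $m$, via Proposition \ref{twistedsectionexistence}; (b) $\mathrm{Bs}(|\lceil mL\rceil + A|) \subset \mathbf{B}_-(L)$ for every $m$, via Theorem \ref{restrictedbaselocuscountable}; and (c) $A + D$ is ample whenever $D$ is supported on $\Supp(L)$ with coefficients in $[-3,3]$, which is the last clause of the statement. Now fix a subvariety $V\not\subset\mathbf{B}_-(L)$. By (b), for each $m$ I may choose an effective $D_m\in|\lceil mL\rceil + A|$ with $V\not\subset\Supp(D_m)$, so that $D_m\geq_V 0$. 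Writing $D_m \equiv mL + A_m'$ with $A_m' := A + (\lceil mL\rceil - mL)$ ample by (c), the scaling invariance $\mathbf{B}_-(mL) = \mathbf{B}_-(L)$ together with the inclusion $\mathbf{B}_{\mathbb{R}}(mL + \delta A_m')\subset\mathbf{B}_-(L)$ for every $\delta>0$ gives $\mathbf{B}_+(D_m)\subset\mathbf{B}_-(L)$, so each $D_m$ is $V$-big.

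Next I would apply Proposition \ref{psigmaapproachesnef} to each $D_m$: for every sufficiently large auxiliary integer $k$ it supplies a birational model $\phi_{m,k}\colon\widetilde{X}_{m,k}\to X$ centered in $\mathbf{B}_+(D_m)\subset\mathbf{B}_-(L)$, a big and nef divisor $N_{m,k}$, and an effective divisor $G_{D_m}$ (depending on $D_m$ but not on $k$) satisfying
\[ N_{m,k}\leq_{\widetilde{V}_{m,k}} P_\sigma(\phi_{m,k}^* D_m)\leq_{\widetilde{V}_{m,k}} N_{m,k} + \tfrac{1}{k}\phi_{m,k}^* G_{D_m}. \]
Setting $\phi_m := \phi_{m,m}$ and $M_{m,D_m} := N_{m,m}$ makes the error $\tfrac{1}{m}\phi_m^* G_{D_m}$.

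The main obstacle is twofold: the model $\phi_m$ must be independent of $V$, and $\tfrac{1}{m}G_{D_m}$ must be uniformly dominated, for every $m$, by a $V$-effective divisor $G_V\sim_{\mathbb{Q}} G$ for a single effective divisor $G$ depending only on $L$ and $A$. I would address both by inspecting the proof of Proposition \ref{psigmaapproachesnef}: the model $\phi_{m,k}$ is constructed from a log resolution of the support of a Kodaira-type decomposition of $D_m$, and $G_{D_m}$ is essentially the effective part of that decomposition. Since $\tfrac{1}{m}D_m$ approaches $L$ in $N^1(X)$, with the discrepancy controlled by ample perturbations permitted by (c), we may choose a single Kodaira decomposition of the $V$-big divisor $L + \eta A$ (which is $V$-big because $V\not\subset\mathbf{B}_-(L)$) for a fixed small $\eta>0$, and take the Kodaira decompositions of the $D_m$ to be $m$-scalings of this fixed decomposition up to bounded ample adjustments. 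Then $\tfrac{1}{m}G_{D_m}$ is $\mathbb{Q}$-linearly equivalent to a fixed effective divisor $G_V$, while the underlying log resolution can be chosen as a log resolution of $\Supp(L)\cup\mathbf{B}_+(L+\eta A)$, which is $V$-independent. Taking $G$ to be a fixed effective divisor in the numerical class determined by $L$, $A$, and $\eta$ (with $G_V\sim_{\mathbb{Q}} G$ for each $V$ via the $V$-bigness of $L+\eta A$) completes the argument.
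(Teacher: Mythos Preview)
Your overall strategy matches the paper's: fix $A$ so that $|\lceil mL\rceil+A|$ has base locus in $\mathbf{B}_-(L)$ for all $m$, pick $D_m\in|\lceil mL\rceil+A|$ with $D_m\geq_V 0$, apply Proposition~\ref{psigmaapproachesnef} to each $D_m$, and then argue that the error terms can be bounded independently of $m$ and $V$. Your diagonal choice $k=m$ (so the error becomes $\tfrac{1}{m}G_{D_m}$) is morally the same as what the paper does.

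However, the final paragraph contains genuine gaps. First, the models $\phi_{m,k}$ in Proposition~\ref{psigmaapproachesnef} are \emph{not} built from a ``Kodaira-type decomposition of $D_m$''; they are resolutions of the base ideal $\mathfrak{b}(|kD_m|)$ and the asymptotic multiplier ideal $\mathcal{J}(\|kD_m\|)$. This is precisely why the $V$-independence of $\phi_m$ is automatic: both ideals depend only on the linear equivalence class $|\lceil mL\rceil+A|$, not on the particular $D_m$ chosen for a given $V$. Your proposed fix via ``a log resolution of $\Supp(L)\cup\mathbf{B}_+(L+\eta A)$'' is not clearly sufficient and misses this point. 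Second, your plan to obtain a single $G_V\sim_{\mathbb{Q}}G$ by rescaling a fixed Kodaira decomposition of $L+\eta A$ does not match the mechanism of Proposition~\ref{psigmaapproachesnef}: the error divisor there comes from Lemma~\ref{multiplieridealbaselocuscomparison}, and for the integral divisor $D_m$ one may take $G_m$ effective with $G_m\equiv D_m-K_X-(n+1)H$. The paper then fixes a very ample $G$ once and for all, chooses $\epsilon_m>0$ with $G-\epsilon_m G_m$ ample (possible since $G_m$ grows only linearly in $m$), applies Proposition~\ref{psigmaapproachesnef} to $D_m$ with index large enough that the error is $\leq\epsilon_m\phi_m^*G_m$, and finally uses the $V$-bigness of $G-\epsilon_m G_m$ to absorb everything into a single $G_V\sim_{\mathbb{Q}}G$. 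Your sketch gestures at this growth cancellation but does not carry it out, and the ``Kodaira decomposition'' framing would not produce the needed divisors.
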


Proposition \ref{psigmaapproachesnef} is equivalent to the following comparison between asymptotic multiplier ideals and base loci.  It is the analogue for $\mathbb{R}$-divisors of \cite{lazarsfeld04}, Theorem 11.2.21.  Note that the theory of asymptotic multiplier ideals for big $\mathbb{R}$-divisors works just as in the case of $\mathbb{Q}$-divisors.

\begin{lem} \label{multiplieridealbaselocuscomparison}
Let $X$ be smooth and let $L$ be a big divisor on $X$.  Fix a very ample $\mathbb{Z}$-divisor $H$ on $X$ such that $H + D$ is ample for every divisor $D$ supported on $\Supp(L)$ with coefficients in the set $[-3,3]$.  Suppose that $b$ is a sufficiently large positive integer so that $\lfloor bL \rfloor - (K_{X} + (n+1)H)$ is numerically equivalent to an effective $\mathbb{Z}$-divisor $G$.  Then for every $m \geq b$ we have
\begin{equation*}
\mathcal{J}(\Vert mL \Vert) \otimes \mathcal{O}_{X}(-G) \subseteq \mathfrak{b}(|\lfloor mL \rfloor|).
\end{equation*}
\end{lem}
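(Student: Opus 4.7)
The strategy mirrors the proof of \cite{lazarsfeld04}, Theorem 11.2.21, with extra care needed to handle the fractional parts arising from the $\mathbb{R}$-divisor $L$. I will establish the stronger inclusion $\mathcal{J}(\Vert (m-b)L \Vert) \otimes \mathcal{O}_X(-G) \subseteq \mathfrak{b}(|\lfloor mL \rfloor|)$; the lemma then follows from subadditivity of asymptotic multiplier ideals, which gives $\mathcal{J}(\Vert mL \Vert) \subseteq \mathcal{J}(\Vert (m-b)L \Vert) \cdot \mathcal{J}(\Vert bL \Vert) \subseteq \mathcal{J}(\Vert (m-b)L \Vert)$.

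To prove the stronger inclusion, it suffices to show that
\begin{equation*}
\mathcal{F} := \mathcal{J}(\Vert (m-b)L \Vert) \otimes \mathcal{O}_X(\lfloor mL \rfloor - G)
\end{equation*}
is globally generated: any global section, multiplied by the canonical section of $\mathcal{O}_X(G)$, lifts to a section of $\mathcal{O}_X(\lfloor mL \rfloor)$ whose local coefficients lie in the ideal $\mathcal{J}(\Vert (m-b)L \Vert) \cdot \mathcal{O}_X(-G)$, so global generation realizes the containment. By Castelnuovo--Mumford regularity with respect to the very ample divisor $H$, this reduces to the vanishing
\begin{equation*}
H^i\bigl(X, \mathcal{J}(\Vert (m-b)L \Vert) \otimes \mathcal{O}_X(\lfloor mL \rfloor - G - iH)\bigr) = 0 \quad (i \geq 1).
\end{equation*}

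Using $G \equiv \lfloor bL \rfloor - (K_X + (n+1)H)$ together with the identity $\lfloor mL \rfloor - \lfloor bL \rfloor = (m-b)L + \delta_{m,b}$, where $\delta_{m,b} := \{bL\} - \{mL\}$ is supported on $\Supp(L)$ with coefficients in $(-1,1)$, the twist rewrites as $K_X + (m-b)L + \delta_{m,b} + (n+1-i)H$. Choosing an effective $\mathbb{R}$-divisor $D \sim_{\mathbb{R}} (m-b)L$ that computes $\mathcal{J}(\Vert (m-b)L \Vert)$, the Nadel vanishing theorem for $\mathcal{J}(X,D)$ applies as soon as
\begin{equation*}
\delta_{m,b} + (n+1-i)H \;=\; (n-i)H + (H + \delta_{m,b})
\end{equation*}
is nef and big. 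For $1 \leq i \leq n$ this is immediate from the hypothesis on $H$: the class $H + \delta_{m,b}$ is ample (since $\delta_{m,b}$ is supported on $\Supp(L)$ with coefficients in $[-3,3]$), and adding the nef class $(n-i)H$ preserves ampleness; for $i > n$ the cohomology vanishes by Grothendieck. The main subtlety throughout is the non-integrality of $L$; the hypothesis that $H + D$ be ample for every $D$ supported on $\Supp(L)$ with coefficients in $[-3,3]$ is designed precisely to absorb the fractional contribution $\delta_{m,b}$ into the positivity needed for the Nadel vanishing step.
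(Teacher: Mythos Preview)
Your argument is correct and follows essentially the same route as the paper: rewrite $\lfloor mL\rfloor - G$ numerically as $K_X + (m-b)L + (\text{ample})$ using the hypothesis on $H$ to absorb the fractional term $\{bL\}-\{mL\}$, then apply Nadel vanishing and Castelnuovo--Mumford regularity to get global generation of $\mathcal{J}(\Vert (m-b)L\Vert)\otimes\mathcal{O}_X(\lfloor mL\rfloor - G)$, and conclude via $\mathcal{J}(\Vert mL\Vert)\subseteq\mathcal{J}(\Vert (m-b)L\Vert)$. The only (harmless) difference is that you route the last inclusion through subadditivity, whereas it follows directly from monotonicity of asymptotic multiplier ideals in the coefficient.
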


\begin{proof}
The condition on $H$ guarantees that for $m \geq b$ we can write
\begin{align*}
\lfloor mL \rfloor - G & \equiv \lfloor mL \rfloor - \lfloor bL \rfloor + K_{X} + (n+1) H \\
& \equiv \left( (m-b)L + A \right) + K_{X} + nH
\end{align*}
for some ample $\mathbb{R}$-divisor $A$.  By applying Nadel vanishing and Castelnuovo-Mumford regularity, we find that
\begin{equation*}
\mathcal{O}_{X}(\lfloor mL \rfloor) \otimes \left( \mathcal{O}_{X}(-G) \otimes \mathcal{J}(\Vert (m-b)L \Vert) \right)
\end{equation*}
is globally generated for $m \geq b$.  Since $\mathcal{J}(\Vert mL \Vert) \subset \mathcal{J}(\Vert (m-b)L \Vert)$, this proves the theorem.
\end{proof}

\begin{proof}[Proof of Proposition \ref{psigmaapproachesnef}:]
Fix a very ample $\mathbb{Z}$-divisor $H$ and an integer $b$ as in Lemma \ref{multiplieridealbaselocuscomparison}.  Thus, for any $m \geq b$ we have
\begin{equation*}
\mathcal{J}(\Vert mL \Vert) \otimes \mathcal{O}_{X}(-G) \subseteq \mathfrak{b}(|\lfloor mL \rfloor|).
\end{equation*}
Recall that $G$ can be chosen to be any effective $\mathbb{Z}$-divisor numerically equivalent to $\lfloor bL \rfloor - (K_{X} + (n+1)H)$.  In particular, for $b$ large enough, the base locus of $|G|$ is contained in $\mathbf{B}_{+}(L)$.  Since this set does not contain $V$, we may ensure that $G \geq_{V} 0$.

Let $\phi_{m}: \widetilde{X}_{m} \to X$ be a resolution of the ideals $\mathfrak{b}(|\lfloor mL \rfloor|)$ and $\mathcal{J}(\Vert mL \Vert)$.  Note that each $\phi_{m}$ is centered in $\mathbf{B}_{+}(L)$.  We write $\phi_{m}^{-1}\mathfrak{b}(|\lfloor mL \rfloor|) \cdot \mathcal{O}_{Y_{m}} = \mathcal{O}_{Y_{m}}(-E_{m})$ and $\phi_{m}^{-1}\mathcal{J}(\Vert mL \Vert) \cdot \mathcal{O}_{Y_{m}} = \mathcal{O}_{Y_{m}}(-F_{m})$.  We also define the big and nef divisor $M_{m} := m\phi_{m}^{*}L - E_{m} - \phi_{m}^{*}\{ mL \}$.

We know that $F_{m}+\phi_{m}^{*}G \geq E_{m}$ for all sufficiently large $m$.  Let $M = \sum_{D \subset \Supp(L)} D$ be the sum of the components of $\Supp(L)$.  Replacing $G$ by $G + M$ allows us to take into account the fractional part of $mL$ so that
\begin{equation*}
F_{m}+\phi_{m}^{*}G \geq E_{m} + \phi_{m}^{*} \{ mL \}.
\end{equation*}
Note that still $G \geq_{V} 0$.  Since $L$ is $V$-big we know that $F_{m} \geq_{\widetilde{V}_{m}} 0$.   Thus, the inequality in the equation above is a $\widetilde{V}_{m}$-inequality.
Furthermore $N_{\sigma}(m\phi_{m}^{*}L) \geq_{\widetilde{V}_{m}} F_{m}$ by \cite{elmnp05} Proposition 2.5.  In all, we get $P_{\sigma}(m\phi_{m}^{*}L) \leq_{\widetilde{V}_{m}} M_{m} + \phi_{m}^{*}G$.  Dividing by $m$ and setting $N_{m} := M_{m}/m$ yields $P_{\sigma}(\phi_{m}^{*}L) \leq_{\widetilde{V}_{m}} N_{m} + \frac{1}{m}\phi_{m}^{*}G$.  The inequality $N_{m} \leq_{\widetilde{V}_{m}} P_{\sigma}(\phi_{m}^{*}L)$ follows from Proposition \ref{vbignessandpsigma} and the fact that $E_{m} + \phi_{m}^{*}\{ mL \} \geq_{\widetilde{V}_{m}} 0$.
\end{proof}

\begin{proof}[Proof of Proposition \ref{psigmaapproachesnef2}:]
Fix very ample divisors $H$ and $G$.
By Theorem \ref{restrictedbaselocuscountable}, there is an ample $\mathbb{Z}$-divisor $A$ such that $\textrm{Bs}(|\lceil mL \rceil + A|) \subset \mathbf{B}_{-}(L)$ for every positive integer $m$.  We may assume that $A$ is sufficiently ample so that:
\begin{itemize}
\item $\lceil mL \rceil + A - K_{X} - (n+1)H$ is numerically equivalent to an effective divisor $G_{m}$ for every $m>0$, and
\item $A+D$ is ample for every $D$ supported on $\Supp(L)$ with coefficients in the set $[-3,3]$.
\end{itemize}
Choose $D_{m} \sim \lceil mL \rceil + A$ so that $D_{m} \geq_{V} 0$.  Note that we can apply Proposition \ref{psigmaapproachesnef} to $D_{m}$ using $G_{m}$ as our choice of effective divisor (since $D_{m}$ is an integral divisor, there is no need to set conditions on the ampleness of $H$ along the components of $D_{m}$).  In particular, for every positive integer $m$ choose an $\epsilon_{m}>0$ such that $G-\epsilon_{m}G_{m}$ is ample.  Proposition \ref{psigmaapproachesnef} constructs a birational map $\phi_{m}: X_{m} \to X$ and big and nef divisors $M_{m,D_{m}}$ such that
\begin{equation*}
M_{m,D_{m}} \leq_{\widetilde{V}_{m}} P_{\sigma}(\phi_{m}^{*}D_{m}) \leq_{\widetilde{V}_{m}} M_{m,D_{m}} + \epsilon_{m}\phi_{m}^{*}G_{m}
\end{equation*}
Since $G - \epsilon_{m}G_{m}$ is $V$-big, we may replace $G$ by some $\mathbb{Q}$-linearly equivalent divisor $G_{V}$ so that
\begin{equation*}
M_{m,D_{m}} \leq_{\widetilde{V}_{m}} P_{\sigma}(\phi_{m}^{*}D_{m}) \leq_{\widetilde{V}_{m}} M_{m,D_{m}} + \phi_{m}^{*}G_{V}.
\end{equation*}
\end{proof}

\section{The Restricted Positive Product} \label{restrictedproductsection}

Fujita realized that one can study the asymptotic behavior of sections of a big divisor $L$ by analyzing the ample divisors sitting beneath $L$ on higher birational models.  The positive product (developed in \cite{boucksom04} and \cite{bdpp04}) is a construction that encapsulates this approach to asymptotic behavior.

In this section we will discuss the restricted positive product $\langle L_{1} \cdot L_{2} \cdot \ldots \cdot L_{k} \rangle_{X|V}$ of \cite{bfj09}.  In contrast to the usual intersection product $L_{1} \cdot L_{2} \cdot \ldots \cdot L_{k} \cdot V$, the restricted positive product throws away the contributions of the base loci of the $L_{i}$.  The result is a numerical equivalence class of cycles on $V$ that gives a more precise measure of the positivity of the $L_{i}$ along $V$.

\subsection{Definition and Basic Properties}

In this section we review the construction of the restricted positive product in \cite{bfj09}.  Throughout we will use the intersection product of \cite{fulton84}.  We will use the following notation:

\begin{defn}
Let $X$ be a normal variety.  Suppose that $V$ is a subvariety of $X$ and that $[L] \in CD(X)$.  We will let $[L]|_{V}$ denote the image under the restriction map $CD^{1}(X) \to CD^{1}(V)$.
\end{defn}

Note that if $L$ is a divisor such that $\Supp(L) \not \supset V$ then $[L|_{V}] = [L]|_{V}$.

\begin{defn}
Let $X$ be a normal variety variety of dimension $n$.  Suppose that $K$, $K'$ are two classes in $N^{k}(X)$.  We write $K \succeq K'$ if $K-K'$ is contained in the closure of the cone generated by effective cycles of dimension $n-k$.
\end{defn}

We will often use the following basic lemma.

\begin{lem}[\cite{bfj09}, Proposition 2.3 and Definition 4.4] \label{nefcomparison}
Let $X$ be a smooth variety and let $V$ be a subvariety of $X$.  Suppose that $N_{1},\ldots,N_{k}$ and $N_{1}',\ldots,N_{k}'$ are nef divisors on $X$ satisfying $N_{i} \geq_{V} N_{i}'$.  Then
\begin{equation*}
N_{1} \cdot \ldots \cdot N_{k} \cdot V \succeq N_{1}' \cdot \ldots \cdot N_{k}' \cdot V.
\end{equation*}
\end{lem}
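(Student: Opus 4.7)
The plan is to reduce the comparison to a telescoping sum and then handle each term using the fact that intersecting nef divisors with an effective cycle yields a class in the closure of the pseudo-effective cone. Write $E_{i} := N_{i} - N_{i}'$; by hypothesis each $E_{i}$ is effective with $V \not\subset \Supp(E_{i})$, so the refined intersection $E_{i} \cdot V$ is a well-defined effective cycle of dimension $\dim V - 1$ supported on $E_{i} \cap V$.

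First I would set up the telescoping identity
\begin{equation*}
N_{1} \cdot \ldots \cdot N_{k} - N_{1}' \cdot \ldots \cdot N_{k}' \; = \; \sum_{i=1}^{k} N_{1}' \cdot \ldots \cdot N_{i-1}' \cdot E_{i} \cdot N_{i+1} \cdot \ldots \cdot N_{k}
\end{equation*}
in $N^{k}(X)$, and then intersect both sides with $V$. Using associativity of the refined intersection product (which is valid because $X$ is smooth and $V \not\subset \Supp(E_{i})$, so $E_{i}$ meets $V$ properly in the sense needed to form $E_{i} \cdot V$ as an effective cycle class), the $i$-th term becomes the intersection of the nef classes $N_{1}', \ldots, N_{i-1}', N_{i+1}, \ldots, N_{k}$ with the effective cycle class $E_{i} \cdot V$.

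The remaining step is to argue that intersecting a sequence of nef divisors with an effective cycle produces a class in the closure of the effective cone. Here I would approximate each nef divisor by a sequence of ample $\mathbb{Q}$-divisors: some multiple of an ample $\mathbb{Q}$-divisor is very ample and, by a standard Bertini argument, can be represented by an effective divisor meeting any given cycle properly, so the intersection with an effective cycle is again effective. Iterating and passing to the limit shows that each of the $k$ terms in the telescoping sum lies in the closure of the cone of effective $(n-k)$-dimensional cycles, i.e., is $\succeq 0$, and summing gives the desired inequality.

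The main technical subtlety will be the last step: ensuring that the approximation of nef classes by ample classes is compatible with intersection against a cycle supported in a possibly singular locus, and that the resulting effective cycles can be chosen uniformly enough to pass to the limit in $N^{k}(X)$. Since $X$ is smooth this causes no trouble (one works with Fulton's refined intersection, which is bilinear and continuous on $N^{*}(X)$), but it is worth explicitly invoking \cite{bfj09}, Proposition 2.3, which records exactly this fact and is cited in the statement of the lemma.
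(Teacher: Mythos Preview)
Your argument is correct, and in fact the paper does not supply its own proof of this lemma; it simply records the statement with a citation to \cite{bfj09}, Proposition 2.3 and Definition 4.4. The telescoping reduction together with the fact that intersecting nef divisors against an effective cycle lands in the closure of the effective cone is exactly the standard argument behind the cited result, so your write-up matches what the paper is invoking.
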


\begin{thrm}[\cite{bfj09},Lemma 2.6 and Lemma 2.7]
Let $X$ be a normal variety, $V$ a subvariety not contained in $\Sing(X)$, and $L_{1},\ldots,L_{k}$ $V$-big divisors.  Consider the classes
\begin{equation*}
\phi_{*}(N_{1} \cdot N_{2} \cdot \ldots \cdot N_{k} \cdot \widetilde{V}) \in N^{k}(V)
\end{equation*}
where $\phi: (\widetilde{X},\widetilde{V}) \to (X,V)$ varies over all smooth $V$-birational models, the $N_{i}$ are nef, and $E_{i} := \phi^{*}L_{i} - N_{i}$ is a $\mathbb{Q}$-divisor satisfying $E_{i} \geq_{\widetilde{V}} 0$.  These classes form a directed set under the relation $\preceq$ and admit a unique maximum under this relation.
\end{thrm}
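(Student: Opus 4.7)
The plan is to establish three properties of the collection $S \subset N^k(V)$ of classes $\phi_*(N_1 \cdot N_2 \cdots N_k \cdot \widetilde{V})$: non-emptiness, directedness under $\preceq$, and existence of a unique maximum. For non-emptiness, since each $L_i$ is $V$-big we may first use Lemma \ref{vbignesscondition} to replace $L_i$ by an $\mathbb{R}$-linearly equivalent representative with $L_i \geq_V 0$, and then apply Proposition \ref{psigmaapproachesnef} to each $L_i$ on a common smooth $V$-birational model $\phi: \widetilde{X} \to X$ (obtained by resolving the models produced for each). This yields big and nef divisors $N_i$ with $\phi^*L_i - N_i \geq_{\widetilde{V}} 0$; a small rational perturbation makes the $N_i$ into $\mathbb{Q}$-divisors, so the corresponding class lies in $S$.

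The heart of the argument is directedness. Given $\alpha, \alpha' \in S$ coming from tuples $(\phi, \{N_i\})$ and $(\phi', \{N_i'\})$, I would pass to a common smooth $V$-birational model $\pi: Y \to X$ with $\pi = \phi\rho = \phi'\rho'$. On $Y$ the pullbacks $\rho^*N_i, \rho'^*N_i'$ are nef and satisfy $\pi^*L_i \geq_{\widetilde{V}_Y} \rho^*N_i$ (and the analogue for the primed tuple); Proposition \ref{vbignessandpsigma} then upgrades these to $P_\sigma(\pi^*L_i) \geq_{\widetilde{V}_Y} \rho^*N_i$ and $P_\sigma(\pi^*L_i) \geq_{\widetilde{V}_Y} \rho'^*N_i'$. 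Applying Proposition \ref{psigmaapproachesnef} on $Y$ produces, for each sufficiently large $m$, a model $\psi_m: Y_m \to Y$ and big nef divisors $N_{i,m}$ with $P_\sigma(\psi_m^*\pi^*L_i) \leq_{\widetilde{V}_{Y_m}} N_{i,m} + \tfrac{1}{m}\psi_m^*G$. Combining this with Nakayama's comparison that $\psi_m^*P_\sigma(M) - P_\sigma(\psi_m^*M)$ is effective and $\psi_m$-exceptional (Theorem III.5.16 of \cite{nakayama04}), we obtain $\widetilde{V}_{Y_m}$-inequalities bounding $\psi_m^*\rho^*N_i$ and $\psi_m^*\rho'^*N_i'$ above by $N_{i,m} + \tfrac{1}{m}\psi_m^*G$ modulo $\psi_m$-exceptional effective divisors. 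Lemma \ref{nefcomparison} transports this to intersection products on $\widetilde{V}_{Y_m}$, and pushforward (which kills the $\psi_m$-exceptional pieces and uses birationality of $\pi|_{\widetilde{V}_Y}$) yields a class in $S$ dominating both $\alpha$ and $\alpha'$ up to an $O(1/m)$ error. To produce genuine domination, I would first perturb $N_i, N_i'$ slightly downward by subtracting a small ample contribution to obtain $\alpha_\delta, \alpha'_\delta \preceq \alpha, \alpha'$; for fixed $\delta$ and large $m$ the ample slack absorbs the $\tfrac{1}{m}\psi_m^*G$ error, and then letting $\delta \to 0$ along a cofinal sequence produces the required dominating element.

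For the unique maximum: the bound $N_i \leq_{\widetilde{V}} \phi^*L_i$ together with Lemma \ref{nefcomparison} (comparing $N_i$ against a fixed nef divisor such as $L_i + H$ for ample $H$) shows $S$ is contained in a bounded subset of $N^k(V)$. A bounded directed set in $N^k(V)$ under the closed partial order $\preceq$ admits a unique supremum, realized as the limit of any cofinal sequence; by construction the sequence arising from Proposition \ref{psigmaapproachesnef} with $m \to \infty$ is cofinal, so the supremum is the unique maximum, and uniqueness is automatic from the partial-order structure. The main obstacle throughout is the directedness step: carefully managing the $\tfrac{1}{m}\psi_m^*G$ error, the $\psi_m$-exceptional corrections from Theorem III.5.16 of \cite{nakayama04}, and the perturbation-then-limit argument needed to upgrade approximate domination to genuine domination in $S$.
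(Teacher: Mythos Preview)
The paper does not give its own proof of this statement; it is cited directly from \cite{bfj09} (Lemmas~2.6 and~2.7), with only the remark that the argument there extends from prime divisors $V$ to arbitrary subvarieties. So there is no ``paper's proof'' to compare against beyond that reference.

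Evaluating your proposal on its own merits, the non-emptiness and boundedness steps are fine, and your overall strategy of using Proposition~\ref{psigmaapproachesnef} to produce near-maximal nef approximations is exactly the right mechanism. However, the directedness argument has a genuine gap. You correctly observe that on a high model $\psi_m$ one only gets
\[
\psi_m^*\rho^*N_i \leq_{\widetilde V_{Y_m}} N_{i,m} + \tfrac{1}{m}\psi_m^*G + (\text{effective }\psi_m\text{-exceptional}),
\]
and you attempt to absorb the $\tfrac{1}{m}\psi_m^*G$ term by perturbing $N_i \mapsto N_i - \delta(\text{ample})$. This does yield, for each fixed $\delta>0$ and $m\gg_\delta 0$, an element $\gamma_m\in S$ with $\gamma_m\succeq\alpha_\delta$ and $\gamma_m\succeq\alpha'_\delta$. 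But it does \emph{not} give any single $\gamma\in S$ with $\gamma\succeq\alpha$ and $\gamma\succeq\alpha'$: as $\delta\to 0$ the required $m$ goes to infinity, and the limit of the $\gamma_m$ is the supremum of $S$, which in general does not lie in $S$. Indeed, in the case $k=1$, $V=X$, the supremum is $[P_\sigma(L)]$, and Nakayama's example (Section~IV.2 of \cite{nakayama04}) of a big divisor with no Zariski decomposition shows this class need not be of the form $\phi_*[N]$ for any nef $N$. So your ``letting $\delta\to 0$ along a cofinal sequence'' step does not produce a dominating element of $S$.

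Relatedly, your final paragraph asserts that the supremum ``is the unique maximum,'' i.e.\ is achieved in $S$; the same example shows this is false in general. In the paper (and in \cite{bfj09}) ``maximum'' should be read as least upper bound in $N^k(V)$, not as an element of $S$. What your argument \emph{does} establish---that the Fujita-type approximations from Proposition~\ref{psigmaapproachesnef} form a bounded sequence whose limit dominates every element of $S$ up to arbitrarily small error---is precisely what is needed to show this supremum exists and is unique, and that is the content actually used downstream in Definition~\ref{restrictedpositiveproductdefn}.
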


\begin{rmk}
Although \cite{bfj09} only proves this when $V$ is a prime divisor in $X$, the proof works without change in this more general situation.
\end{rmk}

The restricted positive product is defined as the maximum class occurring in the previous theorem.

\begin{defn} \label{restrictedpositiveproductdefn}
Let $X$ be a normal variety and let $V$ be a subvariety not contained in $\Sing(X)$.  Let $L_{1},L_{2},\ldots,L_{k}$ be $V$-big divisors.  We define the cycle
\begin{equation*}
\langle L_{1} \cdot L_{2} \cdot \ldots \cdot L_{k} \rangle_{X|V} \in N^{k}(V)
\end{equation*}
to be the maximum under $\preceq$ over all smooth $V$-birational models $\phi: (\widetilde{X},\widetilde{V}) \to (X,V)$ of
\begin{equation*}
\phi_{*}(N_{1} \cdot N_{2} \cdot \ldots \cdot N_{k} \cdot \widetilde{V})
\end{equation*}
where the $N_{i}$ are nef and $E_{i} := \phi^{*}L_{i} - N_{i}$ is a $\mathbb{Q}$-divisor satisfying $E_{i} \geq_{\widetilde{V}} 0$.  In the special case $X = V$, we write $\langle L_{1} \cdot L_{2} \cdot \ldots \cdot L_{k} \rangle_{X}$.
\end{defn}

In fact, \cite{bfj09} Proposition 2.13 shows that the definition is unchanged if we allow $E_{i}$ to be a $V$-pseudo-effective $\mathbb{R}$-divisor.  The restricted positive product satisfies a number of important properties.

\begin{prop}[\cite{bfj09},Proposition 4.6]
As a function on the $k$-fold product of the $V$-big cone, the restricted positive product is continuous, symmetric, homogeneous of degree $1$, and super-additive in each variable in the sense that
\begin{equation*}
\langle (L + L') \cdot L_{2} \cdot \ldots \cdot L_{k} \rangle_{X|V} \succeq \langle L \cdot L_{2} \cdot \ldots \cdot L_{k} \rangle_{X|V} + \langle L' \cdot L_{2} \cdot \ldots \cdot L_{k} \rangle_{X|V}.
\end{equation*}
\end{prop}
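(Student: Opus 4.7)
The plan is to verify the four properties in order of increasing difficulty, using the characterization of $\langle L_1 \cdots L_k \rangle_{X|V}$ as a $\preceq$-maximum over admissible nef approximations in Definition \ref{restrictedpositiveproductdefn} together with the uniform nef approximation from Proposition \ref{psigmaapproachesnef}.

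\textbf{Symmetry} is immediate because both $\phi_*(N_1 \cdot N_2 \cdots N_k \cdot \widetilde{V})$ and the admissibility conditions $\phi^* L_i - N_i \geq_{\widetilde{V}} 0$ are manifestly symmetric in the index $i$. For \textbf{homogeneity} in a single variable, say $L_1$, by a positive rational $c$, the map $N_1 \mapsto cN_1$ bijects admissible approximations of $L_1$ with those of $cL_1$ on any fixed model and scales the intersection by $c$, so taking $\preceq$-suprema on both sides gives $\langle cL_1 \cdot L_2 \cdots L_k \rangle_{X|V} = c\langle L_1 \cdots L_k \rangle_{X|V}$; continuity extends this to $c \in \mathbb{R}_{>0}$.

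For \textbf{super-additivity}, take a smooth $V$-birational model $\phi : (\widetilde{X}, \widetilde{V}) \to (X, V)$ carrying simultaneous admissible nef approximations $N \leq_{\widetilde{V}} \phi^* L$, $N' \leq_{\widetilde{V}} \phi^* L'$, and $N_i \leq_{\widetilde{V}} \phi^* L_i$ (passing to a common refinement if needed). Then $N + N'$ is nef and $\phi^*(L+L') - (N+N') \geq_{\widetilde{V}} 0$, so $N+N'$ is admissible for $L+L'$ on $\phi$. Additivity of the intersection product yields
\begin{equation*}
\phi_*(N \cdot N_2 \cdots N_k \cdot \widetilde{V}) + \phi_*(N' \cdot N_2 \cdots N_k \cdot \widetilde{V}) = \phi_*((N+N') \cdot N_2 \cdots N_k \cdot \widetilde{V}) \preceq \langle (L+L') \cdot L_2 \cdots L_k \rangle_{X|V},
\end{equation*}
and taking $\preceq$-suprema over admissible $N$ and $N'$ independently proves super-additivity.

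\textbf{Continuity} is the main obstacle, since the individual intersections $\phi_*(N_1 \cdots N_k \cdot \widetilde{V})$ depend continuously on the $N_i$, but a $\preceq$-supremum of continuous functions is only semi-continuous \emph{a priori}. The strategy is to pinch the positive product between two expressions on a common model whose difference vanishes. Assuming (by Lemma \ref{vbignesscondition} and homogeneity) that each $L_i \geq_V 0$, Proposition \ref{psigmaapproachesnef} produces for each $m$ a model $\phi_m$ and nef divisors $N_{i,m}$ satisfying $N_{i,m} \leq_{\widetilde{V}_m} P_\sigma(\phi_m^* L_i) \leq_{\widetilde{V}_m} N_{i,m} + \frac{1}{m}\phi_m^*G$ for a fixed effective $G$. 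Lemma \ref{nefcomparison} then sandwiches
\begin{equation*}
\phi_{m*}(N_{1,m} \cdots N_{k,m} \cdot \widetilde{V}_m) \preceq \langle L_1 \cdots L_k \rangle_{X|V} \preceq \phi_{m*}\bigl((N_{1,m} + \tfrac{1}{m}\phi_m^*G) \cdots (N_{k,m} + \tfrac{1}{m}\phi_m^*G) \cdot \widetilde{V}_m\bigr),
\end{equation*}
and multilinear expansion shows the gap between these bounds is $O(1/m)$. The delicate step is to verify that for small perturbations $L_i' \approx L_i$ the data $(\phi_m, N_{i,m})$ can be adjusted so that both sides of the sandwich depend continuously on $(L_1',\ldots,L_k')$; this is achieved by absorbing $L_i' - L_i$ into a small ample correction of $N_{i,m}$ on the same model $\phi_m$. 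Letting $m \to \infty$ then realizes the positive product as a uniform limit of continuous functions on the open $V$-big cone.
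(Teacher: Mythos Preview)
The paper does not supply its own proof of this proposition; it is quoted verbatim from \cite{bfj09}, Proposition~4.6, and used as input for the rest of Section~\ref{restrictedproductsection}. So there is no internal argument to compare against, and I can only comment on your sketch on its own terms.

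Symmetry and super-additivity are correct as written. Your homogeneity argument has a small circularity: you invoke continuity to pass from $c \in \mathbb{Q}_{>0}$ to $c \in \mathbb{R}_{>0}$, but continuity is what you prove next. Reorder the steps---rational homogeneity first, then continuity, then real homogeneity---and say so.

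Your continuity argument is the right strategy and essentially matches how \cite{bfj09} proceeds, but two points need tightening. First, the upper sandwich bound
\[
\langle L_1 \cdots L_k \rangle_{X|V} \preceq \phi_{m*}\bigl((N_{1,m} + \tfrac{1}{m}\phi_m^*G) \cdots (N_{k,m} + \tfrac{1}{m}\phi_m^*G) \cdot \widetilde{V}_m\bigr)
\]
requires Lemma~\ref{nefcomparison}, which needs \emph{nef} upper bounds; since $G$ is merely effective you must first replace it by an ample $H \geq_V G$ on $X$ so that $\phi_m^*H$ is nef. With that fix, any admissible nef $M_i$ on any higher model satisfies $M_i \leq_{\widetilde{V}} P_\sigma(\psi^*\phi_m^*L_i) \leq_{\widetilde{V}} \psi^*(N_{i,m} + \tfrac{1}{m}\phi_m^*H)$ and Lemma~\ref{nefcomparison} applies. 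Second, the phrase ``absorbing $L_i' - L_i$ into a small ample correction of $N_{i,m}$ on the same model $\phi_m$'' is too quick: the models $\phi_m$ were built from the $L_i$ and need not serve for nearby $L_i'$. A cleaner way to finish is to use what you have already shown---super-additivity and positivity give monotonicity in each slot---so that for $L_i'$ in a small neighborhood of $L_i$ one has the two-sided sandwich $\langle (L_1-\epsilon A)\cdots(L_k-\epsilon A)\rangle_{X|V} \preceq \langle L_1'\cdots L_k'\rangle_{X|V} \preceq \langle (L_1+\epsilon A)\cdots(L_k+\epsilon A)\rangle_{X|V}$, and your $O(1/m)$ estimate (applied now to the fixed divisors $L_i \pm \epsilon A$) controls the width of this sandwich uniformly in $L_i'$.
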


Since the product is continuous, this allows us to define a limit as we approach the pseudo-effective cone.

\begin{defn}
Let $X$ be a normal variety, $V$ a subvariety not contained in $\Sing(X)$, and $L_{1},L_{2},\ldots,L_{k}$ $V$-pseudo-effective divisors.  For each $i$ fix a sequence of $V$-big divisors $B_{i,j}$ converging to $0$ as $j$ increases.  We define the class
\begin{equation*}
\langle L_{1} \cdot L_{2} \cdot \ldots \cdot L_{k} \rangle_{X|V} = \lim_{j \to \infty} \langle (L_{1} + B_{1,j}) \cdot (L_{2} + B_{2,j}) \cdot \ldots \cdot (L_{k} + B_{k,j}) \rangle_{X|V}.
\end{equation*}
Note that this limit is independent of the choice of the $B_{i,j}$ since by super-additivity any two choices are comparable under $\succeq$.
\end{defn}

We will sometimes abuse notation by allowing the restricted positive product to take numerical classes as arguments rather than actual divisors.  Since the restricted positive product is compatible under pushforward, we can extend the definition to arbitrarily singular varieties in the following way.

\begin{defn}
Let $X$ be an integral variety and let $\phi: Y \to X$ be a smooth model.  For $[L_{1}],\ldots,[L_{k}] \in CD(X)$ we define
\begin{equation*}
\langle [L_{1}] \cdot \ldots \cdot [L_{k}] \rangle_{X} := \phi_{*} \langle \phi^{*}[L_{1}] \cdot \ldots \cdot \phi^{*}[L_{k}] \rangle_{Y}.
\end{equation*}
\end{defn}

Even though the restricted positive product is continuous along the $V$-big cone, it is only semi-continuous along the $V$-pseudo-effective boundary in the sense that if $L_{i,j}$ is a sequence of $V$-pseudo-effective divisors whose limit is $L_{i}$ then
\begin{equation*}
\langle L_{1} \cdot \ldots \cdot L_{k} \rangle_{X|V} \succeq \lim_{j \to \infty} \langle L_{1,j} \cdot \ldots \cdot L_{k,j} \rangle_{X|V}
\end{equation*}

As noted in \cite{bfj09}, it is most natural to consider the restricted positive product as the set of classes $\{ \langle \phi^{*}L_{1} \cdot \ldots \cdot \phi^{*}L_{k} \rangle_{\widetilde{X}|\widetilde{V}} \}$ on all smooth $V$-birational models $\phi: \widetilde{X} \to X$, or in other words, as a class on the Riemann-Zariski space of $V$.  Although we will not develop this principle systematically, this idea appears implicitly as some theorems will only hold upon taking a limit over all sufficiently high birational models.

Since the restricted positive product should be considered as a birational object, the class in $N^{k}(V)$ may not be closely related to the geometry of $L$ and $V$.  The class $\langle L_{1} \cdot \ldots \cdot L_{k} \rangle_{X|V}$ seems to be most interesting in the following two situations.

\begin{exmple}
When $X$ is smooth $\langle L \rangle_{X}$ is the numerical class of $P_{\sigma}(L)$.  It suffices to check this when $L$ is big.  Recall that for any birational map $\phi: Y \to X$ from a smooth variety $Y$ we have $\phi_{*}P_{\sigma}(\phi^{*}L) = P_{\sigma}(L)$.  Thus, choosing an effective divisor $G$ as in Proposition \ref{psigmaapproachesnef}, the result of the proposition implies that for any $\epsilon > 0$ we have $\langle L \rangle_{X} \preceq [P_{\sigma}(L)] \preceq \langle L + \epsilon G \rangle_{X}$.  Letting $\epsilon \to 0$ demonstrates the equality.
\end{exmple}

\begin{exmple}
Consider $\langle L_{1} \cdot \ldots \cdot L_{d} \rangle_{X|V}$ where $d = \dim V$.  Since the restricted positive product is compatible under pushforward, $\deg \langle \phi^{*}L_{1} \cdot \ldots \cdot \phi^{*}L_{d} \rangle_{\widetilde{X}|\widetilde{V}}$ is independent of the choice of $V$-birational model $(\widetilde{X},\widetilde{V})$ by the projection formula.  In fact, we have

\begin{prop}[\cite{elmnp06}, Proposition 2.11 and Theorem 2.13] \label{restrictedvolumeandproduct}
Let $X$ be a smooth variety, $V$ a $d$-dimensional subvariety, and $L$ a $V$-big divisor.  Then $\deg \langle L^{d} \rangle_{X|V} = \vol_{X|V}(L)$.
\end{prop}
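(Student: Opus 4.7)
The plan is to prove the equality by establishing two inequalities $\deg \langle L^{d} \rangle_{X|V} \leq \vol_{X|V}(L)$ and $\vol_{X|V}(L) \leq \deg \langle L^{d} \rangle_{X|V}$. Both quantities are continuous on the $V$-big cone (for the positive product this is quoted; for the restricted volume it is the remark following Definition \ref{restrictedvolumedefinition}), so I may reduce to the case where $L$ is a $\mathbb{Q}$-divisor. I will then unpack the positive product as the supremum (under $\preceq$) of pushforwards $\phi_{*}(N^{d} \cdot \widetilde{V})$ coming from admissible nef decompositions $\phi^{*}L = N + E$ with $E \geq_{\widetilde{V}} 0$, and the restricted volume as the asymptotic count of restricted sections.

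For the easy inequality $\deg \langle L^{d} \rangle_{X|V} \leq \vol_{X|V}(L)$, I would take an arbitrary admissible decomposition on a smooth $V$-birational model $\phi:(\widetilde{X},\widetilde{V}) \to (X,V)$. The canonical section of $mE$ does not vanish identically on $\widetilde{V}$, so multiplication by it produces an injection $H^{0}(\widetilde{X}|\widetilde{V}, mN) \hookrightarrow H^{0}(\widetilde{X}|\widetilde{V}, m\phi^{*}L)$ for sufficiently divisible $m$. Using the birational invariance of restricted volume (cited in Section \ref{restrictedvolumesection}), this yields $\vol_{\widetilde{X}|\widetilde{V}}(N) \leq \vol_{X|V}(L)$. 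For the nef $\widetilde{V}$-big divisor $N$, a standard argument (basepoint-freeness of large multiples combined with Castelnuovo--Mumford regularity to control the restriction map) identifies $\vol_{\widetilde{X}|\widetilde{V}}(N)$ with the intersection number $N^{d}\cdot \widetilde{V}$. Since degree is preserved under pushforward, taking the supremum over all decompositions gives the inequality.

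For the reverse inequality, which is the main content, I would produce a sequence of admissible decompositions whose intersection numbers saturate the restricted volume. Apply Proposition \ref{psigmaapproachesnef} (after replacing $L$ by a $V$-linearly equivalent $\widetilde{V}$-effective representative) to obtain for each large $m$ a birational model $\phi_{m}: \widetilde{X}_{m} \to X$ centered in $\mathbf{B}_{+}(L)$ and a nef divisor $N_{m}$ with $N_{m} \leq_{\widetilde{V}_{m}} P_{\sigma}(\phi_{m}^{*}L) \leq_{\widetilde{V}_{m}} N_{m} + \frac{1}{m}\phi_{m}^{*}G$. The explicit construction in the proof of that proposition identifies $mN_{m}$ with the mobile part of $|\lfloor m\phi_{m}^{*}L \rfloor|$, so that sections of $mN_{m}$ account for essentially all of $H^{0}(X|V, \mathcal{O}(\lfloor mL\rfloor))$ modulo a twist by the fixed divisor $G$. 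Since $\widetilde{V}_{m}$ is not contained in $\mathbf{B}_{+}(N_{m})$, an asymptotic Riemann--Roch computation on $\widetilde{V}_{m}$ for the nef class $N_{m}$ gives $h^{0}(\widetilde{X}_{m}|\widetilde{V}_{m}, mN_{m}) = \frac{m^{d}}{d!} N_{m}^{d}\cdot \widetilde{V}_{m} + o(m^{d})$, so that $N_{m}^{d}\cdot \widetilde{V}_{m} \to \vol_{X|V}(L)$. Each term is admissible in the definition of $\langle L^{d} \rangle_{X|V}$, yielding the desired bound in the limit.

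The main obstacle is this Fujita-type approximation step: controlling the sections of $m\phi_{m}^{*}L$ that fail to come from the nef piece $mN_{m}$. This is exactly what the multiplier ideal bound $\mathcal{J}(\Vert mL \Vert)\otimes \mathcal{O}(-G) \subseteq \mathfrak{b}(|\lfloor mL\rfloor|)$ of Lemma \ref{multiplieridealbaselocuscomparison} provides — it bounds the discrepancy by a fixed twist whose $d$-dimensional contribution is $O(m^{d-1})$ after restriction to $\widetilde{V}_{m}$, hence negligible relative to $m^{d}/d!$. One must also be careful that passing from $\phi_{m}^{*}L$ to its positive part $P_{\sigma}(\phi_{m}^{*}L)$ does not change the restricted volume, which follows because $\widetilde{V}_{m}$ avoids $\Supp(N_{\sigma}(\phi_{m}^{*}L)) \subset \mathbf{B}_{-}(\phi_{m}^{*}L)$.
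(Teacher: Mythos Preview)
The paper does not supply a proof of this proposition; it is cited from \cite{elmnp06} (Proposition~2.11 and Theorem~2.13, the latter being the restricted Fujita approximation theorem). Your two-inequality strategy is the same one used there, and most of the outline is sound, but one step in each direction needs repair.

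For the inequality $\deg \langle L^{d} \rangle_{X|V} \leq \vol_{X|V}(L)$: the identification $\vol_{\widetilde{X}|\widetilde{V}}(N) = N^{d}\cdot \widetilde{V}$ via ``basepoint-freeness of large multiples'' presumes $N$ is semiample, which a general nef divisor appearing in Definition~\ref{restrictedpositiveproductdefn} is not. The fix is easy: perturb to $N + \epsilon H$ with $H$ ample on $\widetilde{X}$, so the identification holds by Serre vanishing, and then let $\epsilon \to 0$ using continuity of the restricted volume on the $\widetilde{V}$-big cone.

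The reverse inequality has a more serious gap. Your estimate
\[
h^{0}(\widetilde{X}_{m}|\widetilde{V}_{m},\, mN_{m}) \;=\; \frac{m^{d}}{d!}\, N_{m}^{d}\cdot \widetilde{V}_{m} + o(m^{d})
\]
conflates two roles of $m$. Asymptotic Riemann--Roch gives, for each \emph{fixed} $m$, an expansion $h^{0}(\widetilde{X}_{m}|\widetilde{V}_{m}, kN_{m}) = \frac{k^{d}}{d!} N_{m}^{d}\cdot \widetilde{V}_{m} + O_{m}(k^{d-1})$ as $k\to\infty$, with the implied constant depending on the Chern classes of $\widetilde{V}_{m}$. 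Since the models $\widetilde{X}_{m}$ grow arbitrarily complicated as $m$ increases, there is no uniform control on this constant, and evaluating at the single value $k=m$ yields nothing. What is actually needed is the restricted Fujita approximation: on a \emph{fixed} sufficiently high model $\phi_{p}$, the big semiample divisor $N_{p}$ satisfies $\vol_{\widetilde{X}_{p}|\widetilde{V}_{p}}(N_{p}) = N_{p}^{d}\cdot\widetilde{V}_{p}$ (via a genuine limit in a separate parameter $k\to\infty$), and then one must show $\vol_{\widetilde{X}_{p}|\widetilde{V}_{p}}(N_{p}) \to \vol_{X|V}(L)$ as $p\to\infty$. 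Establishing this convergence is precisely the substantive content of \cite{elmnp06} Theorem~2.13; your multiplier-ideal remark in the final paragraph points at the right tool, but the comparison must be run on a fixed model with $k\to\infty$, not diagonally in $m$.
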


\end{exmple}

\subsection{Properties of the Restricted Positive Product}

In this section we study the properties of the restricted positive product.  The main goal of the section is to show that the restricted positive product can be interpreted as the usual intersection product of $P_{\sigma}(\phi^{*}L_{i})$ if we take a limit over all birational models $\phi$.  The advantage of this viewpoint is that it gives us a natural interpretation of the restricted positive product along the boundary of the pseudo-effective cone.

We first show that the restricted positive product has a natural compatibility with the divisorial Zariski decomposition.

\begin{prop} \label{positiveproductpositivepart}
Let $X$ be a smooth variety, $V$ a subvariety, and  $L_{1},\ldots,L_{k}$ $V$-pseudo-effective divisors.  Then
\begin{equation*}
\langle L_{1} \cdot \ldots \cdot L_{k} \rangle_{X|V} = \langle P_{\sigma}(L_{1}) \cdot \ldots \cdot P_{\sigma}(L_{k}) \rangle_{X|V}.
\end{equation*}
\end{prop}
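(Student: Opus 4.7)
The plan is to first prove the statement when each $L_i$ is $V$-big, and then extend to the general $V$-pseudo-effective case by a limit argument.

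For the $V$-big case, note that each $P_\sigma(L_i)$ is itself $V$-big by Proposition \ref{vbignessandpsigma} applied with $M=0$, so $\langle P_\sigma(L_1) \cdot \ldots \cdot P_\sigma(L_k) \rangle_{X|V}$ is computed directly by the supremum definition. The key step is to show that the same set of tuples $(\phi, N_1, \ldots, N_k)$ appears in the defining supremum for both sides. Concretely, I would prove: for any smooth $V$-birational model $\phi: (\widetilde{X},\widetilde{V}) \to (X,V)$ and any nef divisor $N$ on $\widetilde{X}$, the difference $\phi^*L_i - N$ is $\widetilde{V}$-pseudo-effective if and only if $\phi^*P_\sigma(L_i) - N$ is $\widetilde{V}$-pseudo-effective. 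For the forward direction, Proposition \ref{vbignessandpsigma} applied to $\phi^*L_i$ shows that $P_\sigma(\phi^*L_i) - N$ is $\widetilde{V}$-pseudo-effective; then Nakayama's Theorem III.5.16 produces an effective $\phi$-exceptional divisor $E$ with $\phi^*P_\sigma(L_i) = P_\sigma(\phi^*L_i) + E$, and since $\widetilde{V}$ is not contained in any $\phi$-exceptional center, $E \geq_{\widetilde{V}} 0$, so adding $E$ preserves $\widetilde{V}$-pseudo-effectiveness. Conversely, $\phi^*L_i - N = (\phi^*P_\sigma(L_i) - N) + \phi^*N_\sigma(L_i)$, and the second summand is effective with support not meeting $\widetilde{V}$ (since $V \not\subset \Supp(N_\sigma(L_i))$ by $V$-bigness of $L_i$), hence is $\widetilde{V}$-pseudo-effective. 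Matching sets of decompositions produce matching suprema.

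For the general $V$-pseudo-effective case, pick the ample perturbations $B_{i,j} = \tfrac{1}{j}A$ converging to $0$. One inequality follows from monotonicity of the positive product: since $L_i + B_{i,j} - (P_\sigma(L_i) + B_{i,j}) = N_\sigma(L_i)$ is effective with support disjoint from $V$, we obtain
\begin{equation*}
\langle (L_1 + B_{1,j}) \cdot \ldots \cdot (L_k + B_{k,j}) \rangle_{X|V} \succeq \langle (P_\sigma(L_1) + B_{1,j}) \cdot \ldots \cdot (P_\sigma(L_k) + B_{k,j}) \rangle_{X|V}
\end{equation*}
for each $j$, and passing to the limit yields $\langle L_1 \cdot \ldots \cdot L_k \rangle_{X|V} \succeq \langle P_\sigma(L_1) \cdot \ldots \cdot P_\sigma(L_k) \rangle_{X|V}$. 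For the reverse inequality, the $V$-big case gives $\langle (L_1 + B_{1,j}) \cdot \ldots \rangle_{X|V} = \langle P_\sigma(L_1 + B_{1,j}) \cdot \ldots \cdot P_\sigma(L_k + B_{k,j}) \rangle_{X|V}$ for every $j$; with our choice $B_{i,j} = \tfrac{1}{j}A$, the defining limit characterization of $\sigma_\Gamma$ yields $P_\sigma(L_i + B_{i,j}) \to P_\sigma(L_i)$, and the semi-continuity of the positive product along the $V$-pseudo-effective boundary then gives $\langle P_\sigma(L_1) \cdot \ldots \cdot P_\sigma(L_k) \rangle_{X|V} \succeq \lim_j \langle P_\sigma(L_1 + B_{1,j}) \cdot \ldots \rangle_{X|V} = \langle L_1 \cdot \ldots \cdot L_k \rangle_{X|V}$.

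The main obstacle is the bijection of decompositions in the $V$-big case: one must carefully account for the discrepancy $\phi^*P_\sigma(L_i) - P_\sigma(\phi^*L_i)$, which is generally a nonzero effective $\phi$-exceptional divisor. The $V$-birational hypothesis is essential here because it guarantees that this exceptional correction does not meet $\widetilde{V}$; for a merely admissible model the argument would break down. All other steps are applications of the listed properties of the positive product (super-additivity, monotonicity, continuity on the $V$-big cone, and semi-continuity along the boundary).
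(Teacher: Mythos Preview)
Your proof is correct and follows essentially the same line as the paper's. The $V$-big case is identical: both arguments show that the collections of nef $N_i$ occurring in the two suprema coincide, via Proposition~\ref{vbignessandpsigma} and the fact that $\phi^*P_\sigma(L_i) - P_\sigma(\phi^*L_i)$ is effective, $\phi$-exceptional, and hence $\geq_{\widetilde{V}} 0$.

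The only noteworthy difference is in the passage to the $V$-pseudo-effective boundary. You prove two inequalities separately, invoking monotonicity for one direction and semi-continuity plus $P_\sigma(L_i+\tfrac{1}{j}A)\to P_\sigma(L_i)$ for the other. The paper shortcuts this by observing that $P_\sigma(L_i+\epsilon A)-P_\sigma(L_i)=\epsilon A + (N_\sigma(L_i)-N_\sigma(L_i+\epsilon A))$ is itself $V$-big and tends to $0$, so the $P_\sigma(L_i+\epsilon A)$ serve directly as the perturbing sequence in the \emph{definition} of $\langle P_\sigma(L_1)\cdots P_\sigma(L_k)\rangle_{X|V}$, giving equality in one stroke rather than two inequalities. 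Your route is slightly longer but equally valid.
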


\begin{proof}
First suppose that the $L_{i}$ are $V$-big.  Since any nef divisor is movable, Proposition \ref{vbignessandpsigma} shows that for any of the $N_{i}$ as in Definition \ref{restrictedpositiveproductdefn} we have
$P_{\sigma}(\phi^{*}L_{i}) \geq_{\widetilde{V}} N_{i}$.
We also know that $N_{\sigma}(\phi^{*}L_{i}) \geq_{\widetilde{V}} \phi^{*}N_{\sigma}(L_{i})$ since $V$ is not contained in $\mathbf{B}_{-}(L_{i})$.  Combining the two inequalities yields
\begin{equation*}
\phi^{*}P_{\sigma}(L_{i}) \geq_{\widetilde{V}} N_{i}.
\end{equation*}
Thus the classes $\langle L_{1} \cdot \ldots \cdot L_{k} \rangle_{X|V}$ and $\langle P_{\sigma}(L_{1}) \cdot \ldots \cdot P_{\sigma}(L_{k}) \rangle_{X|V}$ are computed by taking a maximum over the same sets, showing that they are equal.

Now suppose that the $L_{i}$ are only $V$-pseudo-effective.  Fix an ample divisor $A$ on $X$.  Note that
\begin{equation*}
P_{\sigma}(L + \epsilon A) - P_{\sigma}(L) = \epsilon A + (N_{\sigma}(L) - N_{\sigma}(L + \epsilon A))
\end{equation*}
is $V$-big.  As $\epsilon$ goes to $0$ these $V$-big classes also converge to $0$.  Thus
\begin{equation*}
\langle P_{\sigma}(L_{1}) \cdot \ldots \cdot P_{\sigma}(L_{k}) \rangle_{X|V} = \lim_{\epsilon \to 0} \langle P_{\sigma}(L_{1} + \epsilon A) \cdot \ldots \cdot P_{\sigma}(L_{k} + \epsilon A) \rangle_{X|V}.
\end{equation*}
Applying the $V$-big case to the right-hand side finishes the proof.
\end{proof}

The following proposition of \cite{bfj09} compares the restricted positive product of the $L_{i}$ along $V$ with the positive product of the restrictions $L_{i}|_{V}$.  \cite{bfj09} only proves the statement when the $L_{i}$ are $V$-big, but the proposition extends to the $V$-pseudo-effective case by taking limits.

\begin{prop}[\cite{bfj09}, Remark 4.5] \label{comparingrestpospodrestrictions}
Let $X$ be a smooth variety, $V$ a subvariety, and $L_{1},\ldots,L_{k}$ $V$-pseudo-effective divisors.  Then
\begin{equation*}
\langle L_{1} \cdot \ldots \cdot L_{k} \rangle_{X|V} \preceq \langle [L_{1}]|_{V} \cdot \ldots \cdot [L_{k}]|_{V} \rangle_{V}.
\end{equation*}
\end{prop}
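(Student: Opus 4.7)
The plan is to reduce to the $V$-big case and there exhibit a direct bijection between members of the two directed sets whose suprema define the two sides. The extension to the $V$-pseudo-effective boundary is then immediate from the definition of $\langle - \rangle_{X|V}$ as a limit of $V$-big positive products and the fact that the relation $\preceq$ is closed.

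For the $V$-big case, let $\phi : (\widetilde{X}, \widetilde{V}) \to (X,V)$ be a smooth $V$-birational model and choose nef divisors $N_1, \ldots, N_k$ on $\widetilde{X}$ with $E_i := \phi^{*}L_i - N_i$ a $\mathbb{Q}$-divisor satisfying $E_i \geq_{\widetilde{V}} 0$, so that $\phi_*(N_1 \cdot \ldots \cdot N_k \cdot \widetilde{V})$ is one of the classes whose supremum computes $\langle L_1 \cdot \ldots \cdot L_k \rangle_{X|V}$. I would restrict to $\widetilde{V}$: each $N_i|_{\widetilde{V}}$ is nef on $\widetilde{V}$, and since no component of $E_i$ contains $\widetilde{V}$, the restriction $E_i|_{\widetilde{V}}$ is an honest effective $\mathbb{Q}$-divisor on $\widetilde{V}$ with $[\phi^{*}L_i]|_{\widetilde{V}} - N_i|_{\widetilde{V}} = [E_i]|_{\widetilde{V}}$. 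Viewing $\phi|_{\widetilde{V}} : \widetilde{V} \to V$ as a smooth birational model of $V$ and noting that $[\phi^{*}L_i]|_{\widetilde{V}} = (\phi|_{\widetilde{V}})^{*}([L_i]|_{V})$, the data $(N_1|_{\widetilde{V}}, \ldots, N_k|_{\widetilde{V}})$ is therefore admissible for the directed set computing $\langle [L_1]|_V \cdot \ldots \cdot [L_k]|_V \rangle_{V}$. The projection formula gives
\begin{equation*}
\phi_{*}(N_1 \cdot \ldots \cdot N_k \cdot \widetilde{V}) = (\phi|_{\widetilde{V}})_{*}(N_1|_{\widetilde{V}} \cdot \ldots \cdot N_k|_{\widetilde{V}}),
\end{equation*}
so each representative of the left-hand maximum is $\preceq$ some representative of the right-hand maximum. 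Taking the supremum over all smooth $V$-birational models yields the inequality in the $V$-big case.

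For the general case, fix an ample divisor $A$ on $X$. Then each $L_i + \epsilon A$ is $V$-big, and by the previous step
\begin{equation*}
\langle (L_1 + \epsilon A) \cdot \ldots \cdot (L_k + \epsilon A) \rangle_{X|V} \preceq \langle [L_1 + \epsilon A]|_V \cdot \ldots \cdot [L_k + \epsilon A]|_V \rangle_{V}.
\end{equation*}
Letting $\epsilon \to 0$, both sides converge (by the definition of the positive product on the pseudo-effective boundary) to the two sides of the desired inequality. Since $\preceq$ is a closed relation on $N^{k}(V)$, it passes to the limit.

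The main subtlety will be verifying compatibility of the restriction $[L_i]|_{V}$ with the pull-back on the model $\widetilde{V}$, since $L_i$ may have components containing $V$: the hypothesis $V \not\subset \mathbf{B}_{-}(L_i)$ plus the fact that $\widetilde{V}$ is the strict transform guarantees $E_i|_{\widetilde{V}}$ is defined via $\geq_{\widetilde{V}}$, and the restriction map $CD^{1}(\widetilde{X}) \to CD^{1}(\widetilde{V})$ sends $\phi^{*}L_i$ to $(\phi|_{\widetilde{V}})^{*}[L_i]|_{V}$ by functoriality; everything else is standard.
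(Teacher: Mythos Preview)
Your argument is correct and is essentially the approach the paper has in mind: the paper does not give its own proof but cites \cite{bfj09}, Remark 4.5 for the $V$-big case (whose content is exactly the restriction-to-$\widetilde{V}$ argument you wrote out) and then remarks that the $V$-pseudo-effective case follows by taking limits, just as you do. One small simplification: the ``projection formula'' identity you invoke is really immediate from the way $\langle - \rangle_{X|V}$ is defined as a class in $N^{k}(V)$, since $N_{1}\cdot\ldots\cdot N_{k}\cdot\widetilde{V}$ is already computed on $\widetilde{V}$ and $\phi_{*}$ there is $(\phi|_{\widetilde{V}})_{*}$.
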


By combining Propositions \ref{positiveproductpositivepart} and \ref{comparingrestpospodrestrictions} we obtain
\begin{equation*}
\langle L_{1} \cdot \ldots \cdot L_{k} \rangle_{X|V} \preceq \phi_{*} \langle [P_{\sigma}(\phi^{*}L_{1})]|_{\widetilde{V}} \cdot \ldots \cdot [P_{\sigma}(\phi^{*}L_{k})]|_{\widetilde{V}} \rangle_{\widetilde{V}}
\end{equation*}
where $\phi: (\widetilde{X},\widetilde{V}) \to (X,V)$ is any $V$-birational model.  The main theorem of this section states that by taking a limit over all birational models the right-hand side approaches the left.

\begin{thrm} \label{positiveproductpsigmaalternative}
Let $X$ be a smooth variety, $V$ a subvariety, and $L_{1},\ldots,L_{k}$ $V$-pseudo-effective divisors.  Fix an ample divisor $A$.  Then for any $\epsilon$ there is some $V$-birational map $\phi: (\widetilde{X},\widetilde{V}) \to (X,V)$ such that
\begin{equation*}
\phi_{*} \langle [P_{\sigma}(\phi^{*}L_{1})]|_{\widetilde{V}} \cdot \ldots \cdot [P_{\sigma}(\phi^{*}L_{k})]|_{\widetilde{V}} \rangle_{\widetilde{V}} \preceq \langle L_{1} \cdot \ldots \cdot L_{k} \rangle_{X|V} + \epsilon A^{k} \cdot V.
\end{equation*}
\end{thrm}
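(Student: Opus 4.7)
\emph{Plan.} First I would reduce to the case where each $L_i$ is $V$-big. Given $\delta > 0$ small, each $L_i + \delta A$ is $V$-big. A direct computation with the divisorial Zariski decomposition shows that $P_{\sigma}(\phi^* L_i) \leq_{\widetilde V} P_{\sigma}(\phi^*(L_i + \delta A))$ on any $V$-birational model: indeed $\sigma_\Gamma$ is monotone decreasing under the addition of an ample class (by the definition via a limit), and the remaining term $\delta \phi^* A$ is numerically equivalent to an effective $\mathbb{R}$-divisor missing $\widetilde V$. By monotonicity of the restricted positive product and the defining limit of the pseudo-effective positive product from the $V$-big cone, an $\tfrac{\epsilon}{2}$-version of the theorem for the $V$-big divisors $L_i + \delta A$ yields the $\epsilon$-version for $L_i$ once $\delta$ is sufficiently small.

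Henceforth assume each $L_i$ is $V$-big. Applying Proposition \ref{psigmaapproachesnef} to each $L_i$, taking a common refinement of the resulting resolutions, and summing the individual effective divisors, I obtain a single smooth $V$-birational model $\phi_m : (\widetilde X_m, \widetilde V_m) \to (X, V)$, nef big divisors $N_i^m$ on $\widetilde X_m$, and one effective divisor $G$ on $X$ with
\[ N_i^m \leq_{\widetilde V_m} P_{\sigma}(\phi_m^* L_i) \leq_{\widetilde V_m} N_i^m + \tfrac{1}{m} \phi_m^* G \]
for every $i$. Choose $c > 0$ so that $cA - G$ is ample and $L_i + cA$ is ample for each $i$ (possible since the ample cone is open).

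Since $cA - G$ is ample, it is represented by an effective $\mathbb{R}$-divisor missing $V$; hence $\tfrac{1}{m}\phi_m^*(cA - G) \geq_{\widetilde V_m} 0$. Combining with the previous display, the class $P_{\sigma}(\phi_m^* L_i)|_{\widetilde V_m}$ differs from the nef class $(N_i^m + \tfrac{c}{m}\phi_m^* A)|_{\widetilde V_m}$ by a pseudo-effective class on $\widetilde V_m$. By monotonicity of the restricted positive product (using the extension to pseudo-effective errors, \cite{bfj09} Proposition 2.13) and the fact that on the smooth variety $\widetilde V_m$ the positive product of nef classes agrees with the intersection product, I obtain
\[ \langle P_{\sigma}(\phi_m^* L_1)|_{\widetilde V_m} \cdots P_{\sigma}(\phi_m^* L_k)|_{\widetilde V_m} \rangle_{\widetilde V_m} \preceq \prod_{i=1}^k \bigl(N_i^m + \tfrac{c}{m}\phi_m^* A\bigr)|_{\widetilde V_m}. \]

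Expanding the right side multilinearly and pushing forward by $\phi_{m*}$, the leading term $\phi_{m*}(N_1^m \cdots N_k^m \cdot \widetilde V_m)$ is dominated by $\langle L_1 \cdots L_k \rangle_{X|V}$ by the defining supremum, while every other term carries a factor $c^j/m^j$ with $j \geq 1$ and, by Lemma \ref{nefcomparison} with the nef bound $N_i^m \leq_{\widetilde V_m} \phi_m^*(L_i + cA)$ together with the projection formula, pushes forward to a fixed class in $N^k(V)$ scaled by $c^j/m^j$. The total mixed contribution is thus $O(1/m)$ in $N^k(V)$; since $A^k \cdot V$ lies in the interior of the pseudo-effective cone of $N^k(V)$, for $m$ sufficiently large this error is $\preceq \epsilon A^k \cdot V$, and taking $\phi = \phi_m$ completes the argument. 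The main technical obstacle is the careful bookkeeping of which inequalities on $\widetilde X_m$ restrict to pseudo-effective classes on $\widetilde V_m$, since this is what lets monotonicity of the restricted positive product carry the estimate from $\widetilde X_m$ down to $\widetilde V_m$; this forces the explicit choice of $cA$ dominating $G$ and the representation of various differences by effective divisors missing $V$.
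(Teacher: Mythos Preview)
Your argument is correct and follows essentially the same route as the paper's proof: reduce to the $V$-big case by perturbing with a small ample, invoke Proposition~\ref{psigmaapproachesnef} to sandwich $P_{\sigma}(\phi_m^*L_i)$ between a nef divisor $N_i^m$ and $N_i^m+\tfrac{1}{m}\phi_m^*G$, then use monotonicity (Lemma~\ref{nefcomparison}) to bound the pushforward by the leading term plus an $O(1/m)$ error controlled by a fixed ample class. The only cosmetic differences are that the paper absorbs the error directly into a constant $C$ via Lemma~\ref{nefcomparison} without first trading $G$ for $cA$, and is somewhat terser about the multilinear expansion; also note that $\widetilde{V}_m$ need not be smooth, but since the positive product is defined on arbitrary integral varieties via a smooth model this does not affect your argument.
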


\begin{proof}
First suppose the $L_{i}$ are $V$-big.  By Lemma \ref{vbignesscondition} we may replace the $L_{i}$ by some $\mathbb{R}$-linearly equivalent divisors to ensure that $L_{i} \geq_{V} 0$.  Proposition \ref{psigmaapproachesnef} then yields an effective divisor $G_{i}$ such that for any $m$ there is a $V$-birational model $\phi: \widetilde{X}_{m} \to X$ with
\begin{equation*}
N_{m,i} \leq_{\widetilde{V}} P_{\sigma}(\phi_{m}^{*}L_{i}) \leq_{\widetilde{V}} N_{m,i}+\frac{1}{m}\phi_{m}^{*}G_{i}
\end{equation*}
for some nef divisors $N_{m,i}$.  Fix some ample divisor $A$ on $X$ such that $A - L_{i}$ and $A - G_{i}$ are ample for every $i$.  By Lemma \ref{nefcomparison} there is some constant $C$ such that
\begin{equation*}
\phi_{m*}\langle [P_{\sigma}(\phi_{m}^{*}L_{1})]|_{\widetilde{V_{m}}} \cdot \ldots \cdot [P_{\sigma}(\phi_{m}^{*}L_{k})]|_{\widetilde{V_{m}}} \rangle_{\widetilde{V_{m}}} \preceq \phi_{*}(N_{m,1} \cdot \ldots \cdot N_{m,k} \cdot \widetilde{V}) + \frac{C}{m} A^{k} \cdot V.
\end{equation*}

Now suppose that the $L_{i}$ are only $V$-pseudo-effective.  We first choose an ample divisor $H$ so that
\begin{equation*}
\langle (L_{1} + H) \cdot \ldots \cdot (L_{k} + H) \rangle_{X|V} \preceq \langle L_{1} \cdot \ldots \cdot L_{k} \rangle_{X|V} + \frac{\epsilon}{2} A^{k} \cdot V.
\end{equation*}
Construct a model $\phi$ by applying the $V$-big case to the $L_{i} + H$ and $\epsilon/2$.  Since $P_{\sigma}(\phi^{*}(L_{i}+H)) - P_{\sigma}(\phi^{*}L)$ is $\widetilde{V}$-pseudo-effective, the conclusion follows.
\end{proof}

\begin{cor} \label{positiveproductpsigmapsef}
Let $X$ be a smooth variety and let $L_{1},\ldots,L_{k}$ be pseudo-effective divisors.  There is a sequence of birational maps $\phi_{m}: X_{m} \to X$ centered in $\cup_{i} \mathbf{B}_{-}(L_{i})$ such that for any subvariety $V$ not contained in $\cup_{i} \mathbf{B}_{-}(L_{i})$ we have
\begin{equation*}
\langle L_{1} \cdot \ldots \cdot L_{k} \rangle_{X|V} = \lim_{m \to \infty} \phi_{m*} \langle [P_{\sigma}(\phi_{m}^{*}L_{1})]|_{\widetilde{V}_{m}} \cdot \ldots \cdot [P_{\sigma}(\phi_{m}^{*}L_{k})]|_{\widetilde{V}_{m}} \rangle_{\widetilde{V}_{m}}
\end{equation*}
\end{cor}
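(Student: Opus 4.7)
I would prove this as a uniform version of Theorem \ref{positiveproductpsigmaalternative}: the same birational models $\phi_m$ should work for every admissible $V$ simultaneously. The key input is Proposition \ref{psigmaapproachesnef2}, whose models $\phi_m$, ample $A$, and effective $G$ are already constructed independently of $V$; only the representatives $G_V$ and $D_m$ depend on $V$. I would apply Proposition \ref{psigmaapproachesnef2} to each $L_i$ separately and pass to a common refinement of the resulting sequences of models, obtaining birational maps $\phi_m: X_m \to X$ centered in $\cup_i \mathbf{B}_-(L_i)$ together with a fixed ample $A$ and effective $G$ that work for every $L_i$ and every $V \not\subset \cup_i \mathbf{B}_-(L_i)$.

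For any such $V$, one direction is immediate. Using the projection formula for the restricted positive product together with Proposition \ref{positiveproductpositivepart} and Proposition \ref{comparingrestpospodrestrictions},
\[
\langle L_1 \cdots L_k \rangle_{X|V} = \phi_{m*} \langle P_\sigma(\phi_m^* L_1) \cdots P_\sigma(\phi_m^* L_k) \rangle_{X_m|\widetilde{V}_m} \preceq \phi_{m*} \langle [P_\sigma(\phi_m^* L_i)]|_{\widetilde{V}_m} \rangle_{\widetilde{V}_m}
\]
for every $m$. For the reverse inequality up to an error vanishing as $m \to \infty$, I would mirror the argument of Theorem \ref{positiveproductpsigmaalternative} but now with the uniform models. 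Set $\widetilde{N}_{m,i} = M_{m,i}/m$ (nef) and let $\widetilde{L}_{m,i}$ denote the $V$-big class $[D_{m,i}]/m$, which approaches $[L_i]$ in $N^1(X)$ as $m \to \infty$. Scaling Proposition \ref{psigmaapproachesnef2} by $1/m$ yields $\widetilde{N}_{m,i} \leq_{\widetilde{V}_m} P_\sigma(\phi_m^* \widetilde{L}_{m,i}) \leq_{\widetilde{V}_m} \widetilde{N}_{m,i} + \tfrac{1}{m}\phi_m^* G_{V,i}$. Using sub-additivity of each $\sigma_\Gamma$, one deduces that $P_\sigma(\phi_m^* L_i) \leq P_\sigma(\phi_m^* \widetilde{L}_{m,i})$ on $X_m$. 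Dominating $G$ by a fixed ample $H$ (with $H - G$ ample), applying Lemma \ref{nefcomparison}, and invoking continuity of the positive product on the $V$-big cone then gives
\[
\phi_{m*} \langle [P_\sigma(\phi_m^* L_i)]|_{\widetilde{V}_m} \rangle_{\widetilde{V}_m} \preceq \langle \widetilde{L}_{m,1} \cdots \widetilde{L}_{m,k} \rangle_{X|V} + O(1/m),
\]
and the right-hand side converges to $\langle L_1 \cdots L_k \rangle_{X|V}$.

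The main obstacle is the inequality $P_\sigma(\phi_m^* L_i) \leq P_\sigma(\phi_m^* \widetilde{L}_{m,i})$, since $P_\sigma$ is not continuous under arbitrary numerical perturbation. I would establish it by writing $\phi_m^* \widetilde{L}_{m,i} = \phi_m^* L_i + \tfrac{1}{m}\phi_m^*(A + (\lceil mL_i \rceil - mL_i))$ and applying sub-additivity of each $\sigma_\Gamma$, which gives $N_\sigma(\phi_m^* \widetilde{L}_{m,i}) \leq N_\sigma(\phi_m^* L_i) + N_\sigma(\tfrac{1}{m}\phi_m^*(A + (\lceil mL_i \rceil - mL_i)))$; subtracting from $\phi_m^* \widetilde{L}_{m,i}$ produces $P_\sigma(\phi_m^* \widetilde{L}_{m,i}) \geq P_\sigma(\phi_m^* L_i) + P_\sigma(\tfrac{1}{m}\phi_m^*(A + (\lceil mL_i \rceil - mL_i))) \geq P_\sigma(\phi_m^* L_i)$. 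A secondary subtlety is bounding the pushforward of $\tfrac{1}{m}[\phi_m^* G_{V,i}]|_{\widetilde{V}_m}$ uniformly on $V$: dominating each $G_{V,i}$ in $\preceq$ by a fixed ample $H$ yields the needed control, since the intersections of $\phi_m^* H$ with the nef $\widetilde{N}_{m,j}$ pushforward to a uniformly bounded class on $V$.
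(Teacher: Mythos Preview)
Your proof is correct and follows essentially the same approach as the paper: both invoke Proposition \ref{psigmaapproachesnef2} to obtain models $\phi_m$, ample $A$, and effective $G$ independent of $V$, and then sandwich the desired quantity between the nef intersections $\phi_{m*}(\widetilde N_{m,1}\cdots\widetilde N_{m,k}\cdot\widetilde V_m)$ and their perturbations by $\tfrac{1}{m}\phi_m^*G$ (or a dominating ample $H$), concluding via the convergence $\tfrac{1}{m}[D_{m,i}]\to[L_i]$ along a $V$-big direction. You are actually more explicit than the paper on one point: the paper's displayed chain involves $P_\sigma(\phi_m^*D_{m,i})$ rather than $P_\sigma(\phi_m^*L_i)$, and your bridge inequality $P_\sigma(\phi_m^*L_i)\le P_\sigma(\phi_m^*\widetilde L_{m,i})$---valid because $\widetilde L_{m,i}-L_i=\tfrac{1}{m}(A+\lceil mL_i\rceil-mL_i)$ is ample by the last clause of Proposition \ref{psigmaapproachesnef2}, hence has vanishing $N_\sigma$ after pullback---is precisely what is needed to pass from one to the other.
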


\begin{proof}
Fix a sequence of birational maps $\phi_{m}$, an ample divisor $A$, and an effective divisor $G$ as in Proposition \ref{psigmaapproachesnef2} for each of the $L_{i}$ simultaneously.  The proposition constructs divisors $D_{m,i} \equiv \lceil mL_{i} \rceil + A$ and big and nef divisors $M_{m,i,D_{m,i}}$ such that
\begin{equation*}
M_{m,i,D_{m,i}} \leq_{\widetilde{V}_{m}} P_{\sigma}(\phi_{m}^{*}D_{m,i}) \leq_{\widetilde{V}_{m}} M_{m,i,D_{m,i}} + \phi_{m}^{*}G_{V}.
\end{equation*}
Just as in the previous proposition we have
\begin{align*}
\lim_{m \to \infty} & \frac{1}{m^{k}} \phi_{m*} (M_{m,1,D_{m,1}} \cdot \ldots \cdot M_{m,k,D_{m,k}} \cdot \widetilde{V}_{m}) \\
&\preceq \lim_{m \to \infty} \frac{1}{m^{k}} \langle D_{m,1} \cdot \ldots \cdot D_{m,k} \rangle_{X|V} \\
& \preceq \lim_{m \to \infty} \frac{1}{m^{k}} \phi_{m*} \langle [P_{\sigma}(\phi_{m}^{*}D_{m,1})]|_{\widetilde{V}_{m}} \cdot \ldots \cdot  [P_{\sigma}(\phi_{m}^{*}D_{m,k})]|_{\widetilde{V}_{m}} \rangle_{\widetilde{V}_{m}} \\
& \preceq \lim_{m \to \infty} \frac{1}{m^{k}} \phi_{m*} ( (M_{m,1,D_{m,1}} + \phi_{m}^{*}G_{V}) \cdot \ldots \cdot (M_{m,k,D_{m,k}} + \phi_{m}^{*}G_{V}) \cdot \widetilde{V}_{m})
\end{align*}
Arguing as in the previous proof, we see that the leftmost and rightmost expressions converge as $m$ increases.  Recall that by our choice of $A$ we have $\lceil mL_{i} \rceil + A - mL_{i}$ is $V$-big for every $m$.  Thus
\begin{equation*}
\langle L_{1} \cdot \ldots \cdot L_{k} \rangle_{X|V} = \lim_{m \to \infty} \left\langle \frac{1}{m} D_{m,1} \cdot \ldots \cdot \frac{1}{m} D_{m,k} \right \rangle_{X|V}
\end{equation*}
so that the sequence converges to the restricted positive product as desired.
\end{proof}

We extract a useful feature of the previous arguments as a definition.

\begin{defn} \label{computeposproddef}
Let $X$ be a smooth variety, $V$ a subvariety, and $L_{1},\ldots,L_{k}$ $V$-big divisors.  Choose $L_{i}' \sim_{\mathbb{Q}} L_{i}$ satisfying $L_{i}' \geq_{V} 0$.  Suppose that $\phi_{m}$ is a countable sequence of maps that satisfy the conclusion of Proposition \ref{psigmaapproachesnef} for every $L_{i}'$ simultaneously.  We say that the $\phi_{m}$ compute the restricted positive product of the $L_{i}$.
\end{defn}

Note that for any finite set of subvarieties $V_{1},\ldots,V_{r}$ we can choose $\phi_{m}$ and $N_{m}$ to simultaneously compute the restricted positive product for each $V_{j}$.  The key property of Definition \ref{computeposproddef} is that only countably many maps are needed to compute the restricted positive product.  

The restricted positive product reduces to the usual product for nef divisors.

\begin{lem} \label{restrictedproductandnefdivisors}
Let $X$ be a smooth variety, $V$ a subvariety, and $L_{1},\ldots,L_{k}$ $V$-pseudo-effective divisors.
\begin{enumerate}
\item Suppose $N$ is a nef divisor.  Then
\begin{equation*}
\langle L_{1} \cdot L_{2} \cdot \ldots \cdot L_{k} \cdot N \rangle_{X|V} = \langle L_{1} \cdot L_{2} \cdot \ldots \cdot L_{k} \rangle_{X|V} \cdot N|_{V}.
\end{equation*}
\item If $H$ is a very general element of a basepoint free linear system, then
\begin{equation*}
\langle L_{1} \cdot L_{2} \cdot \ldots \cdot L_{k} \rangle_{X|V} \cdot H = \langle L_{1} \cdot L_{2} \cdot \ldots \cdot L_{k} \rangle_{X|V \cap H}.
\end{equation*}
\item If $f: X \to Z$ is a morphism and $F$ is a very general fiber then
\begin{equation*}
\langle L_{1} \cdot L_{2} \cdot \ldots \cdot L_{k} \rangle_{X|V} \cdot F = \langle L_{1} \cdot L_{2} \cdot \ldots \cdot L_{k} \rangle_{X|V \cap F}.
\end{equation*}
\end{enumerate}
\end{lem}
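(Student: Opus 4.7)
The plan is to prove the three parts in order. Part (1) follows directly from the definition of the restricted positive product combined with the projection formula. Parts (2) and (3) are parallel: compute both sides on a common sequence of birational models, then use the very-generality of $H$ (respectively $F$) to match strict transforms under the projection formula.

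For part (1), assume first that the $L_i$ are $V$-big. By Definition \ref{restrictedpositiveproductdefn}, $\langle L_1 \cdot \ldots \cdot L_k \cdot N \rangle_{X|V}$ is the $\preceq$-maximum of $\phi_*(N_1 \cdot \ldots \cdot N_k \cdot N' \cdot \widetilde{V})$ over smooth $V$-birational models $\phi: (\widetilde{X},\widetilde{V}) \to (X,V)$ and nef $\mathbb{Q}$-divisors $N_i, N'$ satisfying $\phi^*L_i - N_i \geq_{\widetilde{V}} 0$ and $\phi^*N - N' \geq_{\widetilde{V}} 0$. Because $N$ is itself nef, $\phi^*N$ is a legal choice for $N'$, and by Lemma \ref{nefcomparison} it is the optimal one. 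The projection formula rewrites the resulting term as $\phi_*(N_1 \cdot \ldots \cdot N_k \cdot \widetilde{V}) \cdot N|_V$, and since multiplication by the nef class $N|_V$ preserves $\preceq$, the supremum distributes to yield $\langle L_1 \cdot \ldots \cdot L_k \rangle_{X|V} \cdot N|_V$. The $V$-pseudo-effective case follows by continuity under perturbations $L_i \mapsto L_i + \epsilon A$ with $A$ ample.

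For parts (2) and (3), apply Proposition \ref{psigmaapproachesnef} simultaneously to all $L_i$ to produce a single effective divisor $G$, a countable sequence of $V$-birational models $\phi_m: \widetilde{X}_m \to X$, and nef $\mathbb{Q}$-divisors $N_{m,i}$ on $\widetilde{X}_m$ with
\[
N_{m,i} \leq_{\widetilde{V}_m} P_{\sigma}(\phi_m^*L_i) \leq_{\widetilde{V}_m} N_{m,i} + \tfrac{1}{m}\phi_m^*G.
\]
The construction of $G$ and the $\phi_m$ depends only on the $L_i$ and $X$, not on the chosen subvariety. By Corollary \ref{positiveproductpsigmapsef} these data compute $\langle L_1 \cdot \ldots \cdot L_k \rangle_{X|V} = \lim_m \phi_{m*}(N_{m,1} \cdot \ldots \cdot N_{m,k} \cdot \widetilde{V}_m)$. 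For (2), choose $H$ very general in the basepoint free linear system so that for every $m$: $\phi_m^*H$ has no $\phi_m$-exceptional component; the cycle $\widetilde{V}_m \cdot \phi_m^*H$ equals the strict transform $\widetilde{V \cap H}_m$; $V \cap H$ is irreducible and not contained in $\cup_i \mathbf{B}_-(L_i)$ or in any $\phi_m$-exceptional center; and each effective divisor witnessing one of the $\leq_{\widetilde{V}_m}$ relations remains $\widetilde{V \cap H}_m$-effective. These are countably many open conditions, so a common very general $H$ satisfies them all. Then $(\phi_m, N_{m,i})$ also compute the restricted positive product along $V \cap H$, and the projection formula gives
\[
\phi_{m*}(N_{m,1} \cdot \ldots \cdot N_{m,k} \cdot \widetilde{V}_m) \cdot H = \phi_{m*}(N_{m,1} \cdot \ldots \cdot N_{m,k} \cdot \widetilde{V \cap H}_m).
\]
Sending $m \to \infty$ yields the claimed equality. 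Part (3) is identical with $H$ replaced by a very general fiber $F = f^{-1}(z)$ of $f$, choosing $z$ outside countably many proper closed subsets of $Z$.

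The main obstacle is showing that a single very general $H$ (respectively $F$) simultaneously makes all the countably many models $\phi_m$ well-behaved and allows the same $(\phi_m, N_{m,i})$ to compute the restricted positive product along $V \cap H$ (respectively $V \cap F$). The key enabling point is that $G$ and the $\phi_m$ from Proposition \ref{psigmaapproachesnef} are constructed from $L_i$ and $X$ alone and do not depend on the subvariety, so very-generality ensures that the approximation bounds transfer cleanly from $\widetilde{V}_m$ to $\widetilde{V \cap H}_m$ with no circular dependence on part (2) itself.
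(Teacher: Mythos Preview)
Your argument is correct and follows essentially the same approach as the paper: part (1) via the observation that $\phi^{*}N$ is itself an admissible nef choice, and part (2) via a countable family of models computing the product together with a very-generality argument for $H$. Two small remarks: you should state explicitly (as the paper does) that for parts (2) and (3) it suffices to treat the $V$-big case before invoking Proposition~\ref{psigmaapproachesnef}, since that proposition requires $V$-bigness; and for part (3) the paper does not rerun the argument of (2) with $F$ in place of $H$ but instead applies (2) inductively to pullbacks of very ample divisors from $Z$, which sidesteps the need to invoke the projection formula for higher-codimension cycles.
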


\begin{proof}
For each of these properties, it is enough to check the case when the $L_{i}$ are $V$-big.

The first property is shown in \cite{bfj09}, Proposition 4.7; one simply notes that for an ample divisor $A$ the pull-back $\phi^{*}A$ is already nef so that one may take $\phi^{*}A$ to be the nef divisor in Definition \ref{restrictedpositiveproductdefn}.  By taking limits as $A$ approaches $N$ we obtain the statement.

To show the second property, consider a countable set of smooth $V$-birational models $\phi_{m}: \widetilde{X}_{m} \to X$ that compute the restricted positive product.  Choose $H$ sufficiently general so that it does not contain any $\phi_{m}$-exceptional center.  Then the strict transform of $V \cap H$ is a cycle representing the class $\phi_{m}^{*}H \cdot \widetilde{V}$.  Thus we can identify the classes
\begin{align*}
\phi_{m*}(N_{1} \cdot N_{2} \cdot \ldots \cdot N_{k} \cdot \widetilde{V}) \cdot H & =
\phi_{m*}(N_{1} \cdot N_{2} \cdot \ldots \cdot N_{k} \cdot \phi_{m}^{*}H \cdot \widetilde{V}) \\
& = \phi_{m*}(N_{1} \cdot N_{2} \cdot \ldots \cdot N_{k} \cdot \widetilde{V \cap H})
\end{align*}

The third property can be proved by a similar argument.  One uses the second property inductively by pulling-back very ample divisors from $Z$.
\end{proof}

\begin{cor} \label{nonvanishingposprod}
Let $X$ be a normal variety, let $V$ be a subvariety not contained in $\Sing(X)$, and let $L_{1},\ldots,L_{k}$ be $V$-pseudo-effective divisors.  Suppose that $\phi: (\widetilde{X},\widetilde{V}) \to (X,V)$ is a  smooth $V$-birational model.  If $\langle \phi^{*}L_{1} \cdot \ldots \cdot \phi^{*}L_{k} \rangle_{\widetilde{X}|\widetilde{V}} \neq 0$, then $\langle L_{1} \cdot \ldots \cdot L_{k} \rangle_{X|V} \neq 0$.
\end{cor}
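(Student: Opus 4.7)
The plan is to establish the pushforward identity
\begin{equation*}
\phi_{*}\langle \phi^{*}L_{1} \cdot \ldots \cdot \phi^{*}L_{k} \rangle_{\widetilde{X}|\widetilde{V}} = \langle L_{1} \cdot \ldots \cdot L_{k} \rangle_{X|V},
\end{equation*}
and then use hyperplane cutting via Lemma \ref{restrictedproductandnefdivisors}(2) to reduce the nonvanishing conclusion to a top-degree intersection computation.

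For the identity, first treat the $V$-big case. Every smooth $\widetilde{V}$-birational model $\sigma: Y \to \widetilde{X}$ composes with $\phi$ to yield a smooth $V$-birational model of $X$ with the same strict transform $W$ of $V$ (equivalently, of $\widetilde{V}$), and any nef divisors $N_{i}$ satisfying $\sigma^{*}\phi^{*}L_{i} - N_{i} \geq_{W} 0$ are valid on both sides of Definition \ref{restrictedpositiveproductdefn}; taking suprema yields $\phi_{*}\alpha \preceq \langle L_{1} \cdots L_{k} \rangle_{X|V}$. Conversely, any smooth $V$-birational model $\rho: Z \to X$ admits a common smooth refinement with $\widetilde{X}$ to which nef $N_{i}$ pull back preserving nefness and the $\geq_{W} 0$ condition; the projection formula identifies the two pushforwards and yields $\succeq$. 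Extend to the $V$-pseudo-effective case by perturbing $L_{i}$ by $\epsilon A$ for some fixed ample $A$ and using continuity of the restricted positive product along the $V$-big cone.

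Given $\alpha := \langle \phi^{*}L_{1} \cdots \phi^{*}L_{k} \rangle_{\widetilde{X}|\widetilde{V}} \neq 0$, pick $H$ very ample on $X$ and a very general member of $|H|$; the pullback $\phi^{*}H$ is basepoint-free on $\widetilde{X}$, and very general members of $|H|$ avoid the exceptional centers of the countably many smooth $\widetilde{V}$-birational models required to compute $\alpha$. Iterating Lemma \ref{restrictedproductandnefdivisors}(2) $\dim V - k$ times produces $k$-dimensional subvarieties $V_{0} := V \cap H^{\dim V - k}$ and $\widetilde{V}_{0} := \widetilde{V} \cap (\phi^{*}H)^{\dim V - k}$ (the strict transform of $V_{0}$), together with top-degree identifications
\begin{equation*}
\alpha \cdot (\phi^{*}H)^{\dim V - k} = \langle \phi^{*}L_{1} \cdots \phi^{*}L_{k} \rangle_{\widetilde{X}|\widetilde{V}_{0}}, \quad \langle L_{1} \cdots L_{k} \rangle_{X|V} \cdot H^{\dim V - k} = \langle L_{1} \cdots L_{k} \rangle_{X|V_{0}}.
\end{equation*}
By the projection formula and the birationality of $\widetilde{V}_{0} \to V_{0}$, the degrees of these two top-degree classes coincide, so it suffices to show the left-hand side has strictly positive degree for some $H$.

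The main obstacle is this last positivity step. Since the supremum defining $\alpha$ is attained, write $\alpha = \sigma_{*}(N_{1} \cdots N_{k} \cdot W)$ for a suitable smooth model $\sigma: Y \to \widetilde{X}$ and nef $N_{i}$ on $Y$; by the projection formula the required positivity reduces to strict positivity of $N_{1}|_{W} \cdots N_{k}|_{W} \cdot ((\phi\sigma)^{*}H|_{W})^{\dim V - k}$ on the $(\dim V)$-dimensional subvariety $W \subset Y$. Since $(\phi\sigma)|_{W}$ is birational onto $V$, the pullback $(\phi\sigma)^{*}H|_{W}$ is big and nef on $W$; for $H$ chosen sufficiently positive so that $(\phi\sigma)^{*}H|_{W} + N_{i}|_{W}$ is ample on $W$ for each $i$, the intersection becomes, after suitable rescaling, a strictly positive intersection of ample classes on the positive-dimensional $W$. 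This yields $\alpha \cdot (\phi^{*}H)^{\dim V - k} > 0$, hence $\langle L_{1} \cdots L_{k} \rangle_{X|V_{0}} \neq 0$ and therefore $\langle L_{1} \cdots L_{k} \rangle_{X|V} \neq 0$.
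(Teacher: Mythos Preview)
Your pushforward identity and the reduction via Lemma~\ref{restrictedproductandnefdivisors} are sound and match the paper's opening move. The gap is in the final positivity step.

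First, you write $\alpha = \sigma_{*}(N_{1}\cdots N_{k}\cdot W)$ on the grounds that ``the supremum defining $\alpha$ is attained.'' That is only guaranteed when the $L_{i}$ are $V$-big; for $V$-pseudo-effective divisors the restricted positive product is defined as a limit and need not be realized on any single model. More seriously, even granting a model, the claim that
\[
N_{1}|_{W}\cdots N_{k}|_{W}\cdot\bigl((\phi\sigma)^{*}H|_{W}\bigr)^{d-k} > 0
\]
does not follow from your argument. The divisor $(\phi\sigma)^{*}H|_{W}$ is big and nef on $W$ but \emph{never} ample once $\sigma$ has exceptional centers meeting $W$, and no scaling of $H$ or addition of the nef $N_{i}|_{W}$ changes this: both summands vanish on every curve in $W$ contracted by $\phi\sigma$. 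A nonzero pseudo-effective cycle can pair to zero against a big nef divisor (think of an exceptional curve against a pulled-back ample), so the intersection you want is not automatically positive, and the ``suitable rescaling to ample classes'' step is not available.

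The paper closes this gap by comparing $\phi^{*}H$ to a genuinely ample divisor on $\widetilde{X}$. Choose $A$ ample on $\widetilde{X}$ and $H$ ample on $X$ with $\phi^{*}H - A$ effective and $\widetilde{V}\not\subset\Supp(\phi^{*}H-A)$, which is possible precisely because $\phi$ is $V$-birational. Then Lemma~\ref{restrictedproductandnefdivisors}(1) and monotonicity of the positive product give
\begin{align*}
\langle L_{1}\cdots L_{k}\rangle_{X|V}\cdot H^{d-k}
&= \alpha\cdot(\phi^{*}H)^{d-k} \\
&= \langle \phi^{*}L_{1}\cdots\phi^{*}L_{k}\cdot(\phi^{*}H)^{d-k}\rangle_{\widetilde{X}|\widetilde{V}} \\
&\geq \langle \phi^{*}L_{1}\cdots\phi^{*}L_{k}\cdot A^{d-k}\rangle_{\widetilde{X}|\widetilde{V}}
= \alpha\cdot A^{d-k},
\end{align*}
and the right-hand side is strictly positive because $A|_{\widetilde{V}}$ is ample and $\alpha$ is a nonzero class in the closure of the effective cone. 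This comparison with an ample divisor on the \emph{upstairs} variety is the missing idea; once you have it, the passage to a higher model $\sigma$ and the attempted ampleness argument become unnecessary.
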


\begin{proof}
Let $A$ be an ample divisor on $\widetilde{X}$ and let $H$ be an ample divisor on $X$ such that $\phi^{*}H \geq A$.  Since $\phi$ is $V$-birational, we may ensure that $\Supp(\phi^{*}H - A)$ does not contain $\widetilde{V}$.  Setting $d = \dim V$, we have
\begin{align*}
\langle L_{1} \cdot \ldots \cdot L_{k} \rangle_{X|V} \cdot H^{d-k} & = \langle \phi^{*}L_{1} \cdot \ldots \cdot \phi^{*}L_{k} \rangle_{\widetilde{X}|\widetilde{V}} \cdot \phi^{*}H^{d-k} \\
& = \langle \phi^{*}L_{1} \cdot \ldots \cdot \phi^{*}L_{k} \cdot \phi^{*}H^{d-k} \rangle_{\widetilde{X}|\widetilde{V}} \\
& \geq \langle \phi^{*}L_{1} \cdot \ldots \cdot \phi^{*}L_{k} \cdot A^{d-k} \rangle_{\widetilde{X}|\widetilde{V}} \\
& = \langle \phi^{*}L_{1} \cdot \ldots \cdot \phi^{*}L_{k} \rangle_{\widetilde{X}|\widetilde{V}} \cdot A^{d-k} > 0.
\end{align*}
\end{proof}

We next consider how the restricted positive product behaves when passing to an admissible model.

\begin{prop} \label{restrictedpositiveproductadmissiblemodels}
Let $X$ be a smooth variety, $V$ a subvariety, and $L_{1},\ldots,L_{k}$ $V$-pseudo-effective divisors.  Suppose that $f: (Y,W) \to (X,V)$ is an admissible model.  Then
\begin{equation*}
f_{*} \langle f^{*}L_{1} \cdot \ldots \cdot f^{*}L_{k} \rangle_{Y|W} = \deg(f|_{W}) \langle L_{1} \cdot \ldots \cdot L_{k} \rangle_{X|V}.
\end{equation*}
\end{prop}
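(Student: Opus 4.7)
The plan is to construct, for each $m$, a compatible pair of birational models of $X$ and of $Y$ that simultaneously approximate the two restricted positive products by nef divisors, and then to extract the equality from the projection formula. By continuity of $\langle - \rangle$ on the interior of the pseudo-effective cone (and its definition by limit on the boundary), I reduce to the case where the $L_i$ are $V$-big. Passing to a smooth admissible model if necessary, I may also assume $Y$ and $W$ are smooth.

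Choose representatives $L_i' \sim_\mathbb{Q} L_i$ with $L_i' \geq_V 0$ and apply Proposition \ref{psigmaapproachesnef} simultaneously to all of them. This yields a common sequence of birational maps $\phi_m: \widetilde{X}_m \to X$, big-and-nef divisors $N_{m,i}$ on $\widetilde{X}_m$, and a single effective divisor $G \geq_V 0$ satisfying
\[
N_{m,i} \leq_{\widetilde{V}_m} P_\sigma(\phi_m^*L_i') \leq_{\widetilde{V}_m} N_{m,i} + \tfrac{1}{m}\phi_m^*G.
\]
For each $m$, let $\widetilde{Y}_m$ be a smooth resolution of the component of $Y \times_X \widetilde{X}_m$ dominating $W$, producing a birational morphism $\psi_m: \widetilde{Y}_m \to Y$ and a morphism $g_m: \widetilde{Y}_m \to \widetilde{X}_m$. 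Since $\phi_m|_{\widetilde{V}_m}$ is birational, the strict transform $\widetilde{W}_m$ maps to $\widetilde{V}_m$ via $g_m|_{\widetilde{W}_m}$ generically finitely of degree $\deg(f|_W)$. Because $\widetilde{W}_m$ lies outside the $g_m$-exceptional locus, $g_m$ carries $\widetilde{V}_m$-effective divisors to $\widetilde{W}_m$-effective ones.

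The key step is to verify that $(\widetilde{Y}_m, g_m^*N_{m,i})$ computes the restricted positive product of the $f^*L_i'$ on $Y$ in the sense of Definition \ref{computeposproddef}. Pulling back the lower bound via $g_m$ gives $g_m^*N_{m,i} \leq_{\widetilde{W}_m} \psi_m^*f^*L_i'$, which by Proposition \ref{vbignessandpsigma} upgrades to $g_m^*N_{m,i} \leq_{\widetilde{W}_m} P_\sigma(\psi_m^*f^*L_i')$. For the matching upper bound, the birational properties of $P_\sigma$ give
\[
g_m^*P_\sigma(\phi_m^*L_i') - P_\sigma(\psi_m^*f^*L_i') = N_\sigma(\psi_m^*f^*L_i') - g_m^*N_\sigma(\phi_m^*L_i'),
\]
an effective and $g_m$-exceptional divisor, hence $\widetilde{W}_m$-effective; combined with the pulled-back upper bound, this yields $P_\sigma(\psi_m^*f^*L_i') \leq_{\widetilde{W}_m} g_m^*N_{m,i} + \tfrac{1}{m}\psi_m^*f^*G$.

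With both sides approximable by compatible models, the projection formula applied to the commutative square $f \circ \psi_m|_{\widetilde{W}_m} = \phi_m \circ g_m|_{\widetilde{W}_m}$ (using $(g_m|_{\widetilde{W}_m})_*[\widetilde{W}_m] = \deg(f|_W)[\widetilde{V}_m]$ together with $g_m^*N_{m,i}|_{\widetilde{W}_m} = (g_m|_{\widetilde{W}_m})^*(N_{m,i}|_{\widetilde{V}_m})$) yields
\[
f_*\psi_{m*}(g_m^*N_{m,1} \cdots g_m^*N_{m,k} \cdot [\widetilde{W}_m]) = \deg(f|_W) \cdot \phi_{m*}(N_{m,1} \cdots N_{m,k} \cdot [\widetilde{V}_m]).
\]
Letting $m \to \infty$ and invoking Corollary \ref{positiveproductpsigmapsef}, the left side tends to $f_*\langle f^*L_1 \cdots f^*L_k \rangle_{Y|W}$ and the right to $\deg(f|_W)\langle L_1 \cdots L_k \rangle_{X|V}$, which gives the claim. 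The main technical hurdle is the key step: although $P_\sigma$ does not commute with birational pullback, the resulting discrepancy is $g_m$-exceptional and is therefore absorbed by the $\widetilde{W}_m$-effectivity relation, because $\widetilde{W}_m$ lies outside the $g_m$-exceptional locus by construction.
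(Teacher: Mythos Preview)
Your approach is essentially the same as the paper's: reduce to the $V$-big case, build compatible towers of birational models over $X$ and $Y$, sandwich $P_\sigma$ between pulled-back nef divisors, and extract the identity from the projection formula in the limit. The paper organizes the construction slightly differently (it begins with independent sequences $\phi_m$ and $\psi_m$ computing the two products, and then refines $\psi_m$ so that it maps to $X_m$), but the content is the same.

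There is, however, a genuine gap in your justification of the ``key step''. You assert that $\widetilde{W}_m$ lies outside the $g_m$-exceptional locus ``by construction'', and use this to conclude that the effective $g_m$-exceptional divisor $N_\sigma(\psi_m^*f^*L_i') - g_m^*N_\sigma(\phi_m^*L_i')$ is $\widetilde{W}_m$-effective. But nothing in the definition of an admissible model prevents $W$ from sitting inside an $f$-exceptional divisor: for instance, take $X=\mathbb{P}^3$, $V$ a line, $Y=\mathrm{Bl}_V X$ with exceptional divisor $E$, and $W\subset E$ a section of $E\to V$. Then $(Y,W)\to(X,V)$ is admissible, yet $W$ is contained in the $f$-exceptional locus; with $\phi_m=\mathrm{id}$ your $g_m=f$ and $\widetilde{W}_m=W\subset E$.

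The fix is simple and is (implicitly) what the paper uses: the support of $N_\sigma(\psi_m^*f^*L_i')$ is contained in $\mathbf{B}_{-}(\psi_m^*f^*L_i')=(f\circ\psi_m)^{-1}\mathbf{B}_{-}(L_i')$ by Proposition~\ref{restrictedbaselocusinvariance}, and $\widetilde{W}_m$ maps onto $V\not\subset\mathbf{B}_{-}(L_i')$, so $\widetilde{W}_m\not\subset\Supp\,N_\sigma(\psi_m^*f^*L_i')$. Since the discrepancy is effective and has support contained in $\Supp\,N_\sigma(\psi_m^*f^*L_i')$, it is $\geq_{\widetilde{W}_m}0$. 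Two smaller points: your reduction to $Y,W$ smooth by ``passing to a smooth admissible model'' tacitly uses the proposition you are proving, so it is cleaner to drop that reduction (the argument does not need it); and the final limit should be justified via Definition~\ref{computeposproddef} (your $\psi_m$ compute the restricted positive product of the $f^*L_i'$), not Corollary~\ref{positiveproductpsigmapsef}.
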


Note that $f^{*}L_{i}$ is $W$-pseudo-effective by Proposition \ref{restrictedbaselocusinvariance}.

\begin{proof}
It suffices to consider the case when the $L_{i}$ are $V$-big.  By Lemma \ref{vbignesscondition} we may suppose that $L_{i} \geq_{V} 0$.  Let $\phi_{m}: X_{m} \to X$ be a sequence of $V$-birational models that computes $\langle L_{1} \cdot \ldots \cdot L_{k} \rangle_{X|V}$ and let $\psi_{m}: Y_{m} \to Y$ be a sequence of $W$-birational models that computes $\langle f^{*}L_{1} \cdot \ldots \cdot f^{*}L_{k} \rangle_{Y|W}$.  Since the natural map $\phi_{m}^{-1} \circ f \circ \psi_{m}$ is a morphism on the generic point of $W$, by passing to higher $W$-birational models we may assume that $Y_{m}$ admits a morphism $f_{m}: Y_{m} \to X_{m}$.  Note that
\begin{equation*}
f_{m}^{*}N_{i,m} \leq_{\widetilde{W}_{m}} P_{\sigma}(\psi_{m}^{*}f_{m}^{*}L_{i}) \leq_{\widetilde{W}_{m}} f_{m}^{*}P_{\sigma}(\phi_{m}^{*}L_{i}) \leq_{\widetilde{W}_{m}} f_{m}^{*}N_{i,m} + \frac{1}{m}f_{m}^{*}\phi_{m}^{*}G_{i}.
\end{equation*}
By construction the pushforwards
\begin{equation*}
\phi_{m*}f_{m*}(f_{m}^{*}N_{1,m} \cdot \ldots \cdot f_{m}^{*}N_{k,m} \cdot \widetilde{W}_{m})
\end{equation*}
converge to $\deg(f|_{W})\langle L_{1} \cdot \ldots \cdot L_{k} \rangle_{X|V}$.  The same is true for the terms on the right hand side.  Thus $f_{*}\psi_{m*}\langle P_{\sigma}(\psi_{m}^{*}f^{*}L_{1}) \cdot \ldots \cdot  P_{\sigma}(\psi_{m}^{*}f^{*}L_{k}) \rangle_{Y|\widetilde{W}_{m}}$ converges to the same thing, and Proposition \ref{positiveproductpsigmaalternative} finishes the proof.  
\end{proof}

It is worth pointing out that Proposition \ref{restrictedpositiveproductadmissiblemodels} does not contradict the invariance of $\vol_{X|V}(L)$ under passing to admissible models.  Even if $L$ is $V$-big, $\phi^{*}L$ will not be $W$-big when $\deg(f|_{W}) > 1$, so Proposition \ref{restrictedvolumeandproduct} does not apply to $W$.

\begin{prop} \label{ampleintersectiondominates}
Let $X$ be a smooth variety, $V$ a subvariety of dimension $d$, and $L$ a $V$-pseudo-effective divisor.  Suppose that $\deg(\langle L^{d} \rangle_{X|V}) > 0$.  Then for a very general intersection of very ample divisors $W$ of dimension $d$ we also have $\deg(\langle L^{d} \rangle_{X|W}) > 0$.
\end{prop}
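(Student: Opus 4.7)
The plan is to translate $\deg \langle L^{d} \rangle_{X|W}$ into an intersection on $X$ and then verify positivity via the unrestricted positive product $\langle L^{d} \rangle_{X}$.

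Setting $r = n-d$ and choosing very general $H_{1},\ldots,H_{r}$ in a fixed very ample linear system on $X$ with $W = H_{1} \cap \cdots \cap H_{r}$, we iterate Lemma \ref{restrictedproductandnefdivisors}(2) starting from the full space in place of $V$ to obtain
\begin{equation*}
\deg \langle L^{d} \rangle_{X|W} \,=\, \langle L^{d} \rangle_{X} \cdot H_{1} \cdots H_{r}.
\end{equation*}
Since $\langle L^{d} \rangle_{X}$ is pseudo-effective in $N^{d}(X)$ (a limit of pushforwards of products of nef classes from birational models) and an intersection of ample classes pairs strictly positively with every nonzero pseudo-effective class, it suffices to show $\langle L^{d} \rangle_{X} \neq 0$.

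Assume first that $L$ is $V$-big; then $L$ is big on $X$, since otherwise $\mathbf{B}_{+}(L) = X$ would contain $V$. After replacing $L$ by an $\mathbb{R}$-linearly equivalent representative we may assume $L \geq_{V} 0$. Proposition \ref{psigmaapproachesnef} yields a $V$-birational model $\phi \colon \widetilde{X} \to X$ and a big and nef divisor $N$ on $\widetilde{X}$ with $N \leq_{\widetilde{V}} P_{\sigma}(\phi^{*}L)$; in particular $N \leq \phi^{*}L$ as divisors. Combining with Proposition \ref{restrictedvolumeandproduct}, the intersection number $N^{d} \cdot \widetilde{V}$ approximates $\vol_{X|V}(L) = \deg \langle L^{d} \rangle_{X|V} > 0$ to within $O(1/m)$, so choosing $m$ large gives $N^{d} \cdot \widetilde{V} > 0$. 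Now pick $W$ very general, so that each $H_{i}$ contains no $\phi$-exceptional center and the strict transform $\widetilde{W}$ avoids the proper closed set $\mathbf{B}_{+}(N) \subset \widetilde{X}$ (noting that $\phi(\mathbf{B}_{+}(N))$ is proper closed in $X$ since $\mathbf{B}_{+}(N)$ has codimension at least one in $\widetilde{X}$). By Nakamaye's theorem, $N^{d} \cdot \widetilde{Z} > 0$ for any $d$-dimensional $\widetilde{Z} \not\subset \mathbf{B}_{+}(N)$, so $N^{d} \cdot \widetilde{W} > 0$. For very general $H_{i}$, each $\phi^{*}H_{i}$ equals the strict transform (no exceptional contribution), so $\phi^{*}(H_{1} \cdots H_{r})$ is represented by $[\widetilde{W}]$, and the projection formula gives
\begin{equation*}
\phi_{*}(N^{d}) \cdot H_{1} \cdots H_{r} \,=\, N^{d} \cdot \phi^{*}(H_{1} \cdots H_{r}) \,=\, N^{d} \cdot \widetilde{W} \,>\, 0.
\end{equation*}
Since $N \leq \phi^{*}L$, the class $\phi_{*}(N^{d})$ appears in the supremum defining $\langle L^{d} \rangle_{X}$, so $\langle L^{d} \rangle_{X} \succeq \phi_{*}(N^{d})$ and the intersection with $H_{1} \cdots H_{r}$ inherits the strict positivity.

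The main obstacle is the $V$-pseudo-effective case where $L$ is not $V$-big. Perturbing to $L_{\epsilon} = L + \epsilon A$ lands us in the $V$-big regime with $\deg \langle L_{\epsilon}^{d} \rangle_{X|V} \geq \deg \langle L^{d} \rangle_{X|V} > 0$ by super-additivity, but positivity of $\langle L_{\epsilon}^{d} \rangle_{X} \cdot H_{1} \cdots H_{r}$ can in principle degenerate as $\epsilon \to 0$ because a limit of positive reals may vanish. To handle this, one replaces Proposition \ref{psigmaapproachesnef} with Proposition \ref{psigmaapproachesnef2}: take a countable family of birational models $\phi_{m}$ with big and nef divisors $M_{m,D_{m}}$ associated to $D_{m} \sim \lceil mL \rceil + A$, choose a single $W$ very general simultaneously for all $m$ so that each $\widetilde{W}_{m}$ avoids $\mathbf{B}_{+}(M_{m,D_{m}})$, and run the projection-formula argument at each level. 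Using the normalizations $\tfrac{1}{m^{d}} M_{m,D_{m}}^{d} \cdot \widetilde{V}_{m} \to \deg \langle L^{d} \rangle_{X|V}$ and $\tfrac{1}{m^{d}} M_{m,D_{m}}^{d} \cdot \widetilde{W}_{m} \to \deg \langle L^{d} \rangle_{X|W}$ then lets one pass to the limit and conclude the general case.
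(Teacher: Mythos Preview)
Your opening reduction via Lemma~\ref{restrictedproductandnefdivisors}(2) is correct and efficient: $\deg\langle L^{d}\rangle_{X|W}=\langle L^{d}\rangle_{X}\cdot H_{1}\cdots H_{r}$, and since $\langle L^{d}\rangle_{X}$ lies in the closure of the effective cone, positivity against a product of ample classes is equivalent to $\langle L^{d}\rangle_{X}\neq 0$. In the $V$-big case your argument goes through, though note that once you have deduced that $L$ itself is big, the computation $N^{d}\cdot\widetilde{V}>0$ is never used again; the $\mathbf{B}_{+}$-avoidance step only needs bigness of $N$.

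The genuine gap is in the $V$-pseudo-effective case. You correctly flag the danger---per-level positivity $M_{m,D_{m}}^{d}\cdot\widetilde{W}_{m}>0$ obtained from $\widetilde{W}_{m}\not\subset\mathbf{B}_{+}(M_{m,D_{m}})$ need not survive division by $m^{d}$ and passage to the limit---but your proposed fix does not close it. You record the two convergences
\[
\tfrac{1}{m^{d}}M_{m,D_{m}}^{d}\cdot\widetilde{V}_{m}\to\deg\langle L^{d}\rangle_{X|V}
\qquad\text{and}\qquad
\tfrac{1}{m^{d}}M_{m,D_{m}}^{d}\cdot\widetilde{W}_{m}\to\deg\langle L^{d}\rangle_{X|W},
\]
yet nothing in your argument relates the two sequences. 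The $\mathbf{B}_{+}$-avoidance gives no quantitative lower bound on the second intersection number in terms of the first, so positivity of the first limit by hypothesis does not force positivity of the second.

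The paper supplies exactly this missing link, by a direct cycle comparison rather than $\mathbf{B}_{+}$-avoidance. Choosing integers $c_{i}$ with $\mathcal{I}_{V}(c_{i}H_{i})$ globally generated, one finds on \emph{every} model $\widetilde{X}_{m}$ divisors $D_{i}\in|c_{i}\phi_{m}^{*}H_{i}|$ vanishing along $\widetilde{V}_{m}$ and meeting in the expected dimension; the intersection cycle $D_{1}\cdots D_{n-d}$ then contains $\widetilde{V}_{m}$ as a component, giving
\[
[\widetilde{W}_{m}]=[\phi_{m}^{*}H_{1}]\cdots[\phi_{m}^{*}H_{n-d}]\succeq C\,[\widetilde{V}_{m}],
\qquad C=\prod c_{i}^{-1},
\]
with $C$ \emph{independent of $m$}. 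Intersecting against any nef class yields $N^{d}\cdot\widetilde{W}_{m}\geq C\cdot N^{d}\cdot\widetilde{V}_{m}$, the uniform inequality needed to carry positivity through the limit.
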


\begin{proof}
Fix a sequence of maps $\phi_{m}: \widetilde{X}_{m} \to X$ for $L$ as in Corollary \ref{positiveproductpsigmapsef}.  By choosing very ample divisors $H_{1},\ldots,H_{n-d}$ very general in their linear systems, we may ensure that no $H_{i}$ contains any $\phi_{m}$-exceptional center and the intersection $W = H_{1} \cap \ldots \cap H_{n-d}$ is smooth of the expected dimension.

For each $i = 1,2,\ldots,n-d$, choose a positive integer $c_{i}$ so that $\mathcal{I}_{V}(c_{i}H_{i})$ is generated by global sections and set $C = \prod_{i} c_{i}^{-1}$.  Note that for any $V$-birational model $\phi: (Y,\widetilde{V}) \to (X,V)$, there are $D_{i} \in |c_{i}\phi^{*}H_{i}|$ such that each $D_{i}$ has multiplicity at least $1$ along $\widetilde{V}$ and $D_{1} \cap \ldots \cap D_{n-k}$ has dimension $k$.  In particular for $\phi_{m}$ we have
\begin{align*}
[\widetilde{W}] & = C[\phi_{m}^{*}c_{1}H_{1}] \cap [\phi_{m}^{*}c_{2}H_{2}] \cap \ldots \cap [\phi_{m}^{*}c_{n-d}H_{n-d}] \\
& \succeq C[\widetilde{V}]
\end{align*}
where $\widetilde{W}$ and $\widetilde{V}$ denote the strict transforms of $W$ and $V$ on $\widetilde{X}_{m}$.  In particular, for any nef divisor $N$ on $\widetilde{X}_{m}$ we have $N^{d} \cdot \widetilde{W} \geq N^{d} \cdot \widetilde{V}$, and the conclusion follows.
\end{proof}

\section{Nakayama Constants} \label{nakayamaconstantsection}

Suppose that $L$ is an ample divisor and $V$ is a subvariety in $X$.   Let $\phi: Y \to X$ be a smooth resolution of the ideal $\mathcal{I}_{V}$ and define the divisor $E$ by the equation $\mathcal{O}_{Y}(-E) = \phi^{-1}\mathcal{I}_{V} \cdot \mathcal{O}_{Y}$.  The Seshadri constant
\begin{equation*}
\varepsilon(L,V) := \max \{ \, \tau \, | \, \phi^{*}L - \tau E \textrm{ is nef } \}
\end{equation*}
measures ``how ample'' $L$ is along the subvariety $V$.  Seshadri constants play an important role in understanding the positivity properties of ample divisors.  We will be interested in a related notion that can be defined for an arbitrary pseudo-effective divisor $L$.  It first appears in connection with the numerical dimension in \cite{nakayama04}.

\begin{defn}
Let $X$ be a normal variety, $\mathcal{I}$ be an ideal sheaf on $X$, and $L$ be a pseudo-effective divisor.   Choose a smooth resolution $\phi: Y \to X$ of $\mathcal{I}$ and define $E$ by setting $\mathcal{O}_{Y}(-E) = \phi^{-1}\mathcal{I} \cdot \mathcal{O}_{Y}$.  We define the Nakayama constant
\begin{equation*}
\varsigma(L,\mathcal{I}) := \max \{ \, \tau \, | \, \phi^{*}L - \tau E \textrm{ is pseudo-effective } \}.
\end{equation*}
Of course, $\varsigma$ is independent of the choice of resolution.  When $\mathcal{I}$ is the ideal sheaf of a subvariety $V$, we will also denote the Nakyama constant by $\varsigma(L,V)$.
\end{defn}

One advantage of $\varsigma(L,V)$ is that it can be positive even when $L$ is pseudo-effective but not big.  Thus the Nakayama constant is a more sensitive measure of positivity than the moving Seshadri constant of \cite{nakamaye03} which always vanishes as we approach the pseudo-effective boundary.  It turns out that the Nakayama constant is closely related to the other notions of positivity we have considered.

\begin{rmk} \label{equivalenttonakayama}
\cite{nakayama04} works with a slightly different formulation of this concept.  Nakayama's definition is equivalent to ours; the equivalence is demonstrated in the first paragraph of the proof of Proposition \ref{nakayamasformulation}.
\end{rmk}

There is a useful criterion for non-vanishing of $\varsigma$ which is closer in spirit to Nakayama's original formulation.

\begin{prop} \label{nakayamasformulation}
Let $X$ be a normal variety, $\mathcal{I}$ be an ideal sheaf, and $L$ be a pseudo-effective divisor.  Then $\varsigma(L,\mathcal{I}) > 0$ iff there is an ample divisor $A$ on $X$ so that for any $q$
\begin{equation*}
h^{0}(X,\overline{\mathcal{I}^{q}} \otimes \mathcal{O}_{X}(\lceil mL \rceil + A)) > 0
\end{equation*}
for sufficiently large $m$, where $\overline{\mathcal{I}^{q}}$ denotes the integral closure of $\mathcal{I}^{q}$.
\end{prop}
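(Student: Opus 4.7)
The plan is to prove both implications by lifting the analysis from $X$ to a log resolution $\phi \colon Y \to X$ of $\mathcal{I}$ and using the identification $\phi_{*}\mathcal{O}_{Y}(-qE) = \overline{\mathcal{I}^{q}}$ via the projection formula, with Proposition~\ref{twistedsectionexistence} serving as the bridge between pseudo-effectivity on $Y$ and existence of sections. The first paragraph of the proof would verify directly that our pseudo-effectivity definition of $\varsigma$ agrees with the section-theoretic formulation in \cite{nakayama04}; this is largely a reformulation mirroring the main argument below.

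For the forward implication, I would start from $\varsigma(L,\mathcal{I}) = \tau > 0$, which says that $\phi^{*}L - \tau E$ is pseudo-effective on $Y$. Proposition~\ref{twistedsectionexistence} then supplies a big basepoint free divisor $B$ on $Y$ with
\[
h^{0}\bigl(Y, \, K_{Y} + (n+2)B + \lceil m(\phi^{*}L - \tau E) \rceil \bigr) > 0
\]
for every $m \geq 0$. Picking $A$ ample on $X$ so that $\phi^{*}A - K_{Y} - (n+2)B$ is effective and absorbs the bounded rounding discrepancy between $\phi^{*}\lceil mL \rceil$ and $\lceil m \phi^{*}L \rceil$, these sections upgrade to sections of $\mathcal{O}_{Y}(\phi^{*}\lceil mL \rceil + \phi^{*}A - \lfloor m\tau \rfloor E)$. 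Pushing forward via $\phi_{*}$ yields non-zero elements in $H^{0}(X, \overline{\mathcal{I}^{\lfloor m\tau \rfloor}} \otimes \mathcal{O}_{X}(\lceil mL \rceil + A))$. For any target $q$, choosing $m \geq q/\tau$ forces $\lfloor m\tau \rfloor \geq q$, and the inclusion $\overline{\mathcal{I}^{\lfloor m\tau \rfloor}} \subseteq \overline{\mathcal{I}^{q}}$ supplies the required sections.

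For the reverse implication, each section in $H^{0}(X, \overline{\mathcal{I}^{q}} \otimes \mathcal{O}_{X}(\lceil mL \rceil + A))$ pulls back by the projection formula to an effective divisor on $Y$ of class $\phi^{*}\lceil mL \rceil + \phi^{*}A - qE$. Taking $m = N(q)$ (the minimal threshold in the hypothesis) and dividing by $N(q)$ shows that
\[
\phi^{*}L + \tfrac{1}{N(q)} \phi^{*}A - \tfrac{q}{N(q)} E
\]
is pseudo-effective on $Y$. The main obstacle is to extract a positive limit $\tau > 0$ for the ratio $q/N(q)$ along a subsequence as $q \to \infty$; I expect this to come from the finite generation of the normalized Rees algebra $\bigoplus_{q \geq 0} \overline{\mathcal{I}^{q}}$ together with Castelnuovo--Mumford-type regularity, which force $N(q)$ to grow at most linearly in $q$ once $A$ is chosen sufficiently ample. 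Once this is in hand, the pseudo-effective cone is closed, so the limiting class $\phi^{*}L - \tau E$ is pseudo-effective with $\tau > 0$, giving $\varsigma(L,\mathcal{I}) \geq \tau > 0$ by definition.
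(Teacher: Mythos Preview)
Your forward implication is essentially the paper's argument: apply Proposition~\ref{twistedsectionexistence} on $Y$ to $\phi^{*}L-\tau E$, absorb $K_{Y}+(n+2)B$ and the rounding error into $\phi^{*}A$, and push down using $\phi_{*}\mathcal{O}_{Y}(-qE)=\overline{\mathcal{I}^{q}}$. That part is fine.

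The reverse implication has a real gap. You produce, for each $q$, an effective class $\phi^{*}L+\tfrac{1}{N(q)}\phi^{*}A-\tfrac{q}{N(q)}E$ and then want $q/N(q)$ to stay bounded away from $0$. But nothing in the hypothesis controls $N(q)$: the statement only says that for each $q$ some $m$ works, and a priori $N(q)$ could grow like $q^{2}$, in which case your limit is $0$ and you conclude only that $\phi^{*}L$ is pseudo-effective. Your appeal to finite generation of $\bigoplus_{q}\overline{\mathcal{I}^{q}}$ and Castelnuovo--Mumford regularity does not help: those govern the algebra of the ideals $\overline{\mathcal{I}^{q}}$ themselves, not how the threshold $m$ depends on $q$ when testing against the unrelated line bundles $\mathcal{O}_{X}(\lceil mL\rceil+A)$. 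There is no mechanism here forcing $N(q)$ to be linear in $q$.

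The paper avoids this by proving the contrapositive with a convex-geometry trick. Assume $\varsigma(L,\mathcal{I})=0$, so $m\phi^{*}L-E$ is never pseudo-effective. Let $p\colon N^{1}(Y)\to N^{1}(Y)/\mathbb{R}[\phi^{*}L]$ be the quotient; then $p(-E)$ lies outside the closed cone $p(\overline{NE}^{1}(Y))$, so one can perturb by a small ample $H$ on $Y$ and still have $p(-E+H)$ outside. This means $m\phi^{*}L-E+H$ is not pseudo-effective for \emph{any} $m$. Now given any ample $A$ on $X$, pick $q$ with $qH-\phi^{*}A$ pseudo-effective; then $m\phi^{*}L-qE+\phi^{*}A$ is never pseudo-effective, killing all the relevant $H^{0}$'s for this single $q$ and every $m$. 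This quotient argument is the missing idea: it converts ``$\varsigma=0$'' into a uniform-in-$m$ failure, rather than trying to extract a rate from the section hypothesis.
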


Note that we can replace $\lceil - \rceil$ by $\lfloor - \rfloor$ by absorbing the difference into $A$.

\begin{proof}
Let $\phi: Y \to X$ denote a smooth resolution of $\mathcal{I}$ and define $E$ by $\mathcal{O}_{Y}(-E) = \phi^{-1}\mathcal{I} \cdot \mathcal{O}_{Y}$.  Suppose that $\varsigma(L,\mathcal{I})=0$ so that $m\phi^{*}L - E$ is not pseudo-effective for any $m$.  Let $p: N^{1}(Y) \to V$ denote the cokernel of the inclusion $\mathbb{R}[\phi^{*}L] \to N^{1}(Y)$.  Note that $p(-E)$ is disjoint from $p(\overline{NE}^{1}(Y))$.  Thus, there is a small ample divisor $H$ on $Y$ so that $p(-E+H)$ is still disjoint from $p(\overline{NE}^{1}(Y))$.  In other words, $m\phi^{*}L - E + H$ is not pseudo-effective for any $m$.

Let $A$ be any ample divisor on $X$.  Choose $q$ so that $qH - \phi^{*}A$ is pseudo-effective.  Then $m\phi^{*}L - qE + \phi^{*}A$ is not pseudo-effective for any $m$.  Thus, for any $A$ there is a $q$ so that
\begin{equation*}
h^{0}(Y,\mathcal{O}_{Y}( \phi^{*}(\lfloor mL \rfloor + A) -qE)) = 0
\end{equation*}
for every $m$.  Since the class of $\lceil mL \rceil - \lfloor mL \rfloor$ is bounded as $m$ varies, by absorbing the difference into $A$ the condition using $\lceil mL \rceil$ also fails.

Conversely, suppose that $\varsigma(L,\mathcal{I}) > 0$.  Then for any real number $b>0$, $a\phi^{*}L - b E$ is pseudo-effective for any $a \geq b / \varsigma(L,\mathcal{I})$.  By Proposition \ref{twistedsectionexistence} (and Remark \ref{floorremark}), there is an ample divisor $H$ on $Y$ (independent of $b$) so that
\begin{equation*}
h^{0}(Y,\mathcal{O}_{Y}(\lfloor c(a\phi^{*}L - bE) \rfloor + H)) > 0
\end{equation*}
for every $c>0$ and every $a \geq b/\varsigma(L,\mathcal{I})$.  Choose an ample $\mathbb{Z}$-divisor $A \geq \phi_{*}H$.  Then $\phi^{*}A \geq \phi^{*}\phi_{*}H \geq H$ so that
\begin{equation*}
h^{0}(Y,\mathcal{O}_{Y}( \phi^{*}(\lceil acL \rceil + A) - \lfloor bcE \rfloor)) > 0.
\end{equation*}
Fix an integer $q$ and choose $c$ so that $\lfloor cbE \rfloor \geq qE$.  Then for any $m > bc/\varsigma(L,\mathcal{I})$ we have
\begin{equation*}
h^{0}(X,\overline{\mathcal{I}^{q}} \otimes \mathcal{O}_{X}(\lceil mL \rceil + A)) > 0.
\end{equation*}
\end{proof}

If we are only interested in whether $\varsigma(L,\mathcal{I})>0$, we can replace the condition of Proposition \ref{nakayamasformulation} by several alternatives.  We have $\mathcal{I}^{q} \subset \overline{\mathcal{I}^{q}} \subset \mathcal{I}^{<q>}$ and by the comparison theorems for symbolic powers (for example \cite{swanson00} Theorem 3.1), there is some $k$ independent of $q$ so that $\mathcal{I}^{<kq>} \subset \mathcal{I}^{q}$.  When $X$ is smooth, we have $\mathcal{I}^{q} \subset \mathcal{J}(\mathcal{I}^{q})$ and by Skoda's theorem $\mathcal{J}(\mathcal{I}^{q}) \subset \mathcal{I}^{q-\dim(X)+1}$ for sufficiently large $q$. Thus, the non-vanishing of $\varsigma(L,\mathcal{I})$ is equivalent to the statement that for any $q$
\begin{equation*}
h^{0}(X,*_{q} \otimes \mathcal{O}_{X}(\lceil mL \rceil + A)) > 0
\end{equation*}
for sufficiently large $m$, where $*_{q}$ can be:
\begin{itemize}
\item $\mathcal{I}^{q}$,
\item $\mathcal{I}^{<q>}$, or
\item $\mathcal{J}(\mathcal{I}^{q})$ when $X$ is smooth.
\end{itemize}

Applying the statement for symbolic powers, we immediately obtain:

\begin{prop} \label{nakayamaconstantbirational}
Let $X$ be a normal variety, $V$ a subvariety not contained in $\Sing(X)$, and $L$ a divisor.  If $(\widetilde{X},\widetilde{V})$ is a smooth $V$-birational model for $(X,V)$, then $\varsigma(\phi^{*}L,\widetilde{V}) > 0$ iff $\varsigma(L,V)>0$.
\end{prop}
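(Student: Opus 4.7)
The plan is to invoke the symbolic-power reformulation of $\varsigma > 0$ established in the paragraph just above the statement: $\varsigma(L,\mathcal{I}_V) > 0$ iff there exists an ample divisor $A$ on $X$ such that, for every $q$, one has $h^{0}(X, \mathcal{I}_{V}^{<q>} \otimes \mathcal{O}_{X}(\lceil mL \rceil + A)) > 0$ for all sufficiently large $m$, and analogously on $\widetilde{X}$. The essential birational fact to exploit is that because $\phi|_{\widetilde{V}}: \widetilde{V}\to V$ is birational (by the $V$-birational hypothesis) and $X$ is normal, a function vanishes to order $\geq q$ at the generic point of $V$ if and only if its pullback vanishes to order $\geq q$ at the generic point of $\widetilde{V}$. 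Concretely this gives $\phi^{-1}\mathcal{I}_{V}^{<q>} \cdot \mathcal{O}_{\widetilde{X}} \subseteq \mathcal{I}_{\widetilde{V}}^{<q>}$ and, via normality, $\phi_{*}\mathcal{I}_{\widetilde{V}}^{<q>} \subseteq \mathcal{I}_{V}^{<q>}$.

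For the direction $\varsigma(L,V) > 0 \Rightarrow \varsigma(\phi^{*}L,\widetilde{V}) > 0$, I would fix an ample divisor $A$ on $X$ satisfying the symbolic-power criterion and choose an ample $\widetilde{A}$ on $\widetilde{X}$ such that the divisor $\phi^{*}A - \widetilde{A} + R$ is effective, where $R$ is a fixed divisor chosen once and for all to dominate the rounding discrepancy $\phi^{*}\lceil mL \rceil - \lceil m\phi^{*}L \rceil$ (whose support is contained in $\phi^{*}\Supp(L)\cup\Exc(\phi)$ with coefficients uniformly bounded in $m$). For each $q$ and each sufficiently large $m$, a nonzero section of $\mathcal{I}_{V}^{<q>}\otimes\mathcal{O}_{X}(\lceil mL\rceil + A)$ pulls back and, after multiplying by the canonical section of the effective correction, yields a nonzero section of $\mathcal{I}_{\widetilde{V}}^{<q>}\otimes\mathcal{O}_{\widetilde{X}}(\lceil m\phi^{*}L\rceil + \widetilde{A})$, as desired.

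For the converse, I would argue symmetrically by pushing forward: fix a witness $\widetilde{A}$ on $\widetilde{X}$, choose an ample $A$ on $X$ so that $\phi^{*}A - \widetilde{A} - R'$ is effective for a fixed rounding correction $R'$, and then $\phi_{*}$ of a nonzero section of $\mathcal{I}_{\widetilde{V}}^{<q>}\otimes\mathcal{O}_{\widetilde{X}}(\lceil m\phi^{*}L\rceil + \widetilde{A})$ (multiplied by the canonical section of $\phi^{*}A - \widetilde{A} - R'$) lands in $\mathcal{I}_{V}^{<q>}\otimes\mathcal{O}_{X}(\lceil mL\rceil + A)$, using $\phi_{*}\mathcal{O}_{\widetilde{X}} = \mathcal{O}_{X}$ (normality of $X$) together with the symbolic-power inclusion above. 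The only real technical point is the careful bookkeeping of the rounding discrepancies $\lceil m\phi^{*}L\rceil - \phi^{*}\lceil mL\rceil$, and this is painless because their coefficients are bounded independently of $m$ and $q$, so a single choice of $R$ (resp.\ $R'$) absorbs them all; the rest is routine.
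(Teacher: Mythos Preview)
Your approach is exactly the paper's: the proof there is the single sentence ``Applying the statement for symbolic powers, we immediately obtain'' the proposition, and you have correctly identified the two inclusions $\phi^{-1}\mathcal{I}_{V}^{<q>}\cdot\mathcal{O}_{\widetilde{X}}\subseteq\mathcal{I}_{\widetilde{V}}^{<q>}$ and $\phi_{*}\mathcal{I}_{\widetilde{V}}^{<q>}\subseteq\mathcal{I}_{V}^{<q>}$ (both coming from the fact that $\phi$ is an isomorphism near the generic point of $V$, so order of vanishing there is preserved) as the mechanism that transports the section-existence criterion across $\phi$.

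One slip to fix: in your forward direction the inequality on the ample twist is backwards. To pass from a pulled-back section of $\mathcal{O}_{\widetilde{X}}(\phi^{*}\lceil mL\rceil+\phi^{*}A)$ to a section of $\mathcal{O}_{\widetilde{X}}(\lceil m\phi^{*}L\rceil+\widetilde{A})$ by multiplying by a canonical section, you need the \emph{difference}
\[
(\lceil m\phi^{*}L\rceil+\widetilde{A})-(\phi^{*}\lceil mL\rceil+\phi^{*}A)=\widetilde{A}-\phi^{*}A-\bigl(\phi^{*}\lceil mL\rceil-\lceil m\phi^{*}L\rceil\bigr)
\]
to be (linearly equivalent to) effective. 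So you must take $\widetilde{A}$ \emph{large}, with $\widetilde{A}-\phi^{*}A-R$ linearly equivalent to an effective divisor, not $\phi^{*}A-\widetilde{A}+R$ effective as you wrote. Your converse direction is stated with the correct sign. With that correction the argument goes through.
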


The following proposition indicates that the Nakayama constant satisfies the usual compatibility relations.

\begin{prop} \label{nakayamaconstantcompatibility}
Let $X$ be a smooth variety, let $L$ be a pseudo-effective divisor, and let $\mathcal{I}$ be an ideal such that no associated prime of $\mathcal{I}$ is centered in $\mathbf{B}_{-}(L)$.  Then
\begin{enumerate}
\item $\varsigma(L,\mathcal{I}) = \varsigma(P_{\sigma}(L),\mathcal{I})$
\item If $L$ is big, then $\varsigma(L,\mathcal{I}) = \max_{\phi^{*}L \geq A} \varsigma(A,\phi^{-1}\mathcal{I} \cdot \mathcal{O}_{Y})$ where $\phi: Y \to X$ varies over all birational maps and $A$ is big and nef.
\end{enumerate}
\end{prop}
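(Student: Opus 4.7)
My plan is to prove part (1) via a section-theoretic reformulation of $\varsigma$ extracted from the proof of Proposition \ref{nakayamasformulation}, and then to deduce part (2) from (1) together with the approximation of $P_{\sigma}(\phi^{*}L)$ by nef divisors supplied by Proposition \ref{psigmaapproachesnef}.

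For part (1), the inequality $\varsigma(L,\mathcal{I}) \geq \varsigma(P_{\sigma}(L),\mathcal{I})$ is immediate: on any smooth resolution $\phi: Y \to X$ of $\mathcal{I}$, we have $\phi^{*}L - \tau E = (\phi^{*}P_{\sigma}(L) - \tau E) + \phi^{*}N_{\sigma}(L)$, and adding the effective divisor $\phi^{*}N_{\sigma}(L)$ preserves pseudo-effectivity. For the reverse inequality I would quantify the proof of Proposition \ref{nakayamasformulation} to obtain the characterization that $\varsigma(L,\mathcal{I})$ is the supremum of ratios $q/m$ for which there exists an ample $A$ making $h^{0}(X, \mathcal{I}^{<q>} \otimes \mathcal{O}_{X}(\lfloor mL \rfloor + A)) > 0$ for all sufficiently large $m$. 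With this in hand, the reverse inequality reduces to checking that the canonical identification $H^{0}(X,\mathcal{O}_{X}(\lfloor mP_{\sigma}(L)\rfloor)) \cong H^{0}(X,\mathcal{O}_{X}(\lfloor mL \rfloor))$ from the divisorial Zariski decomposition restricts to an isomorphism of the $\mathcal{I}^{<q>}$-twisted subspaces. This identification is given by multiplication by a local defining section $f$ of $\lfloor mN_{\sigma}(L)\rfloor$. Under the hypothesis, no associated prime $P$ of $\mathcal{I}$ has $V(P) \subseteq \mathbf{B}_{-}(L)$, and in particular no such $V(P)$ lies in $\Supp(N_{\sigma}(L))$; hence $f \notin P$ for every such $P$. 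Since the associated primes of $\mathcal{I}^{<q>}$ lie among the associated primes of $\mathcal{I}$, multiplication by $f$ is a non-zerodivisor modulo $\mathcal{I}^{<q>}$ and supplies the required isomorphism of twisted global sections.

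For part (2), the inequality $\max \varsigma(A, \phi^{-1}\mathcal{I} \cdot \mathcal{O}_{Y}) \leq \varsigma(L, \mathcal{I})$ follows from superadditivity of $\varsigma$ together with the birational invariance $\varsigma(\phi^{*}L, \phi^{-1}\mathcal{I} \cdot \mathcal{O}_{Y}) = \varsigma(L, \mathcal{I})$ (since a resolution of $\phi^{-1}\mathcal{I} \cdot \mathcal{O}_{Y}$ is also a resolution of $\mathcal{I}$): if $\phi^{*}L = A + N$ with $A$ big and nef and $N$ effective, then $\varsigma(A, \phi^{-1}\mathcal{I} \cdot \mathcal{O}_{Y}) \leq \varsigma(\phi^{*}L, \phi^{-1}\mathcal{I} \cdot \mathcal{O}_{Y}) = \varsigma(L, \mathcal{I})$. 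For the reverse, I would first use part (1) to replace $L$ by $P_{\sigma}(L)$, and then apply Proposition \ref{psigmaapproachesnef} to a representative $L' \sim_{\mathbb{R}} L$ with $L' \geq 0$ to produce birational maps $\phi_{m}: \widetilde{X}_{m} \to X$ and big and nef divisors $N_{m}$ satisfying $N_{m} \leq P_{\sigma}(\phi_{m}^{*}L) \leq N_{m} + \frac{1}{m}\phi_{m}^{*}G$ for a fixed effective $G$. The bound $\phi_{m}^{*}L \geq N_{m}$ makes each $N_{m}$ an admissible choice, and the defect $P_{\sigma}(\phi_{m}^{*}L) - N_{m} \leq \frac{1}{m}\phi_{m}^{*}G$, combined with the section-theoretic description from part (1) and the birational invariance $\varsigma(\phi_{m}^{*}G, \phi_{m}^{-1}\mathcal{I} \cdot \mathcal{O}_{\widetilde{X}_{m}}) = \varsigma(G, \mathcal{I})$, allows one to show $\varsigma(N_{m}, \phi_{m}^{-1}\mathcal{I} \cdot \mathcal{O}_{\widetilde{X}_{m}}) \to \varsigma(L, \mathcal{I})$ as $m \to \infty$, giving the desired maximum.

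The main obstacle I anticipate is making precise the quantitative refinement of Proposition \ref{nakayamasformulation}: while that proposition characterizes when $\varsigma(L, \mathcal{I}) > 0$ via section existence, pinning down the value of $\varsigma(L, \mathcal{I})$ requires tracking the sharp rate at which $q/m$ can grow while sections of $\mathcal{I}^{<q>}(\lfloor mL \rfloor + A)$ persist. A secondary subtlety in part (2) is the convergence $\varsigma(N_{m}, \phi_{m}^{-1}\mathcal{I} \cdot \mathcal{O}_{\widetilde{X}_{m}}) \to \varsigma(L, \mathcal{I})$: monotonicity of $\varsigma$ under addition of effective divisors yields the upper bound, while the lower bound requires using the section-theoretic approach to transfer sections controlling $\varsigma(L, \mathcal{I})$ back to sections on $\widetilde{X}_{m}$ with the nef twist $N_{m}$.
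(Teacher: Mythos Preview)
Your overall architecture for part (2) matches the paper's: both use Proposition \ref{psigmaapproachesnef} to sandwich $P_{\sigma}(\phi_{m}^{*}L)$ between $N_{m}$ and $N_{m}+\tfrac{1}{m}\phi_{m}^{*}G$, combine with part (1), and invoke birational invariance of $\varsigma$ under pulling back $\mathcal{I}$.  The paper's write-up is terser (it just observes that $\varsigma$ is monotone in the divisor and that the right-hand side approaches $\varsigma(P_{\sigma}(L),\mathcal{I})$), but the content is the same.

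For part (1), however, your route diverges sharply from the paper's and carries a genuine gap.  The paper gives a three-line direct argument on a fixed resolution $\phi:Y\to X$ of $\mathcal{I}$: if $\phi^{*}L-\tau E$ is pseudo-effective, perturb by a small ample $\epsilon A$, write $\phi^{*}L+\epsilon A\sim_{\mathbb{R}} \tau E+F$ with $F\geq 0$, and use the defining property of $N_{\sigma}$ (Nakayama III.1.14) together with the hypothesis on $\mathcal{I}$ to conclude $N_{\sigma}(\phi^{*}L+\epsilon A)\leq F$, whence $P_{\sigma}(\phi^{*}L+\epsilon A)-\tau E$ is pseudo-effective; let $\epsilon\to 0$.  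No section counting is needed.

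Your approach instead passes through a quantitative refinement of Proposition \ref{nakayamasformulation} and then invokes the identification $H^{0}(X,\mathcal{O}_{X}(\lfloor mP_{\sigma}(L)\rfloor))\cong H^{0}(X,\mathcal{O}_{X}(\lfloor mL\rfloor))$.  The difficulty is that the characterization you want necessarily involves the ample twist $+A$, yet the identification you cite is the \emph{untwisted} one; once you add $A$, the map $H^{0}(\lfloor mP_{\sigma}(L)\rfloor+A)\hookrightarrow H^{0}(\lfloor mL\rfloor+A)$ given by multiplication by the section of $\lfloor mN_{\sigma}(L)\rfloor$ is in general only an injection, not an isomorphism (sections of $\lfloor mL\rfloor+A$ need not vanish to full order $m\sigma_{\Gamma}(L)$ along the components $\Gamma$ of $N_{\sigma}(L)$).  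So the step ``restricts to an isomorphism of the $\mathcal{I}^{\langle q\rangle}$-twisted subspaces'' does not follow as written.  One could try to repair this by tracking the bounded defect $m\sigma_{\Gamma}(L)-\sigma_{\Gamma}(mL+A)$ and absorbing it into a larger twist, but this is exactly the quantitative control you already flagged as the main obstacle, and it makes the argument substantially longer than the paper's direct one.
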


\begin{proof}
\begin{enumerate}
\item  It suffices to show the inequality $\leq$.  Let $\phi: Y \to X$ denote a smooth resolution of $\mathcal{I}$ and let $E$ denote the divisor satisfying $\mathcal{O}_{X}(-E) = \phi^{-1}\mathcal{I} \cdot \mathcal{O}_{Y}$.  Suppose that $\phi^{*}L - \tau E$ is pseudo-effective.  Fix an ample $A$ on $Y$.  For any $\epsilon > 0$, we find that $\phi^{*}L + \epsilon A \sim_{\mathbb{R}} \tau E + F$ for some effective $F$.  Since $\Supp(E)$ is not contained in the diminished base locus of $\phi^{*}L$, we know that $N_{\sigma}(\phi^{*}L + \epsilon A) \leq F$.  Subtracting, we find that $P_{\sigma}(\phi^{*}L + \epsilon A) - \tau E$ is pseudo-effective.  Taking a limit over $\epsilon$ and noting that $\phi^{*}P_{\sigma}(L) \geq P_{\sigma}(\phi^{*}L)$ completes the proof of the inequality.

\item It suffices to show the inequality $\leq$.  We may also replace $L$ by some $\mathbb{Q}$-linearly equivalent divisor so that $L \geq 0$.  Fix an effective ample divisor $H$ on $X$.  Proposition \ref{psigmaapproachesnef} indicates that there are birational maps $\phi_{m}$ and big and nef divisors $N_{m}$ satisfying $N_{m} \leq P_{\sigma}(\phi_{m}^{*}L) \leq N_{m} + \frac{1}{m}\phi_{m}^{*}H$.  The expression on the right hand side can be made arbitrarily close to $\varsigma(P_{\sigma}(L),\phi^{-1}\mathcal{I} \cdot \mathcal{O}_{Y})$.  By (1) this equals $\varsigma(L,\mathcal{I})$.
\end{enumerate}
\end{proof}

\cite{nakayama04} shows that $\varsigma(L,V)$ is controlled by what happens to a very general subvariety of dimension equal to $\dim(V)$.

\begin{prop}[\cite{nakayama04}, Lemma V.2.21] \label{nakayamaconstantgeneralsubvariety}
Let $X$ be a smooth variety of dimension $n$ and let $L$ be a pseudo-effective divisor.  Suppose there is a $d$-dimensional subvariety $V$ such that $\varsigma(L,V) = 0$.  Then there is a very ample divisor $H$ so that any complete intersection $W$ of $(n-d)$ very general elements of $|H|$ satisfies $\varsigma(L,W) = 0$.
\end{prop}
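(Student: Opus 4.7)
The plan is to argue the contrapositive: assuming $\varsigma(L,V) = 0$, I will construct a specific $H$ and show that every very general complete intersection $W$ of $(n-d)$ elements of $|H|$ satisfies $\varsigma(L,W) = 0$. Equivalently, if instead very general such $W$ had $\varsigma(L,W) > 0$, I will degenerate $W$ inside a flat family to a reducible subscheme containing $V$ as a component and use semi-continuity to transfer positivity from very general $W$ back to $V$, contradicting the hypothesis.

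First I choose $H$ very ample enough that $\mathcal{I}_V \otimes \mathcal{O}_X(H)$ is globally generated. Let $T = |H|^{n-d}$ and let $p:\mathcal{W}\to T$ be the universal family of complete intersections inside $T\times X$. For $H$ sufficiently ample, Bertini yields a dense open $T^{sm}\subset T$ over which $W_t$ is smooth of dimension $d$. Applying Bertini instead to $(n-d)$-tuples of sections of $\mathcal{I}_V(H)$ produces a point $t_0\in T$ with $W_{t_0}=V\cup V'$, where $V'$ is a subvariety of dimension $d$ with $V\cap V'$ of dimension strictly less than $d$. Since $W_{t_0}$ still has pure dimension $d$, $t_0$ lies in the flat locus $T^{flat}$ of $p$. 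I then form the universal blow-up $\pi:\mathcal{Y}\to T^{flat}$ along $\mathcal{W}$, with exceptional divisor $\mathcal{E}$; this is a proper flat morphism whose fiber over $t$ is the pair $(Y_t, E_t)$ consisting of the blow-up of $X$ along $W_t$ together with its exceptional divisor.

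If $\varsigma(L,W_t)>0$ for $t$ in a dense open of $T^{sm}$, then by Proposition \ref{nakayamasformulation} combined with the boundedness of $\{(Y_t,E_t)\}_{t\in T^{flat}}$, I can select a single ample divisor $A$ on $X$ and, for each $q$, a single threshold $m(q)$ so that $h^0(Y_t,\pi_t^{*}\mathcal{O}_X(\lceil mL\rceil +A)(-qE_t))>0$ for every $m\geq m(q)$ and $t$ in a dense open of $T^{sm}$. Grauert's upper semi-continuity of cohomology, applied to the line bundle $\pi^{*}p_X^{*}\mathcal{O}_X(\lceil mL\rceil +A)\otimes\mathcal{O}_\mathcal{Y}(-q\mathcal{E})$ on the proper flat family $\mathcal{Y}\to T^{flat}$, then forces the same nonvanishing at $t_0$. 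Pushing down via $\pi_{t_0}$ and using $\pi_{t_0*}\mathcal{O}_{Y_{t_0}}(-qE_{t_0}) = \overline{\mathcal{I}_{W_{t_0}}^{q}}$ together with the projection formula produces a nonzero section of $\overline{\mathcal{I}_{W_{t_0}}^{q}}\otimes\mathcal{O}_X(\lceil mL\rceil +A)$. Since $V\subset W_{t_0}$ implies $\overline{\mathcal{I}_{W_{t_0}}^{q}}\subset\overline{\mathcal{I}_V^{q}}$, the same nonvanishing holds for $\mathcal{I}_V$, and Proposition \ref{nakayamasformulation} then yields $\varsigma(L,V)>0$, contradicting the hypothesis.

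The main obstacle is the uniform choice of $A$: the proof of Proposition \ref{nakayamasformulation} chooses $A$ based on an ample divisor on the individual blow-up $Y_t$ produced by Proposition \ref{twistedsectionexistence}, so one must exploit the fact that $\{(Y_t,E_t)\}_{t\in T^{flat}}$ sits in a single bounded family (all the $E_t$ have numerical class determined by $H$ and $n-d$) to fix a single ample $A$ on $X$ that suffices for all very general $t$ simultaneously, and similarly to obtain a uniform threshold $m(q)$. A secondary technicality is verifying flatness of the universal blow-up $\pi$ at the reducible specialization $t_0$, which follows from $\mathcal{W}\to T^{flat}$ being locally a regular embedding of codimension $n-d$.
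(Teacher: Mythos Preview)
The paper does not supply its own proof of this proposition; it is stated with attribution to Nakayama's monograph (Lemma~V.2.21 there) and used as a black box in the proof of Theorem~\ref{numericaldimensionequalities}. So there is no in-paper argument to compare your proposal against.

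On its own merits, your degeneration strategy is the natural one and is essentially correct in outline: specialize a general complete intersection to one containing $V$ as a component, and transfer the non-vanishing of $h^{0}$ by upper semi-continuity, then use $\overline{\mathcal{I}_{W_{t_0}}^{q}}\subset\overline{\mathcal{I}_V^{q}}$ together with Proposition~\ref{nakayamasformulation}. The flatness of the universal blow-up at $t_0$ is fine for the reason you give, and the push-forward identity $\pi_{t_0*}\mathcal{O}_{Y_{t_0}}(-qE_{t_0})=\overline{\mathcal{I}_{W_{t_0}}^{q}}$ holds for any blow-up, so the possible singularity of $Y_{t_0}$ is harmless at that step.

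The one point that deserves more than a parenthetical is the uniform threshold $m(q)$. Boundedness of the family $\{(Y_t,E_t)\}$ gives you a uniform $A$, but the proof of Proposition~\ref{nakayamasformulation} shows that $m(q)$ depends on the value $\varsigma(L,W_t)$ itself, which a priori could tend to zero along $T^{sm}$. The clean fix is to observe that for each fixed rational $\epsilon>0$ the locus $\{t\in T^{flat}:\phi_t^{*}L-\epsilon E_t\text{ is pseudo-effective}\}$ is closed (pseudo-effectivity specializes in flat families). Hence $\{t:\varsigma(L,W_t)>0\}$ is a countable union of closed subsets of $T^{sm}$; if it is not contained in a countable union of \emph{proper} closed subsets, one of these closed sets must be all of $T^{sm}$, giving a uniform lower bound $\varsigma(L,W_t)\geq\epsilon_0>0$ for every $t\in T^{sm}$. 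With that uniform $\epsilon_0$ in hand, the proof of Proposition~\ref{nakayamasformulation} produces a single $m(q)$ valid for all $t$, and your semi-continuity step goes through. You should make this dichotomy explicit rather than folding it into ``boundedness.''
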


\section{The Numerical Dimension} \label{numericaldimensionsection}

Our goal in this section is to show that the different definitions of the numerical dimension coincide.  We start by giving an example of effective divisors that are numerically equivalent but have different Iitaka dimensions.

\begin{exmple} \label{notnumerical}
We give an example of a threefold $X$ and effective divisors $L,L'$ such that $L \equiv L'$ but $\kappa(L) \neq \kappa(L')$.  Fix an elliptic curve $E$ and consider $S = E \times E$ with projection maps $p_{1}$ and $p_{2}$.  Let $F$ be a fiber of $p_{1}$.  Choose a degree $0$ divisor $T$ on $E$ that is non-torsion and define $N=p_{2}^{*}T$.  We have $\kappa(F) = 1$ and $\kappa(F + N) = -\infty$.

Let $X$ be the $\mathbb{P}^{1}$-bundle $\mathbb{P}_{S}(\mathcal{O}_{S} \oplus \mathcal{O}_{S}(F+N))$ with the morphism $\pi: X \to S$.  Define $L$ to be the section $\mathbb{P}_{S}(\mathcal{O}_{S})$ and define $L' = L - \pi^{*}N$.  Note that $L$ and $L'$ are numerically equivalent.  By identifying the pushforwards of $\mathcal{O}_{X}(mL)$ with symmetric powers of $\mathcal{O}_{S} \oplus \mathcal{O}_{S}(F+N)$ we see that $\kappa(L) = 0$.  Similarly, since $\mathcal{O}_{X}(L')$ can be realized as the relative dualizing sheaf of $\mathbb{P}_{S}(\mathcal{O}_{S}(-N) \oplus \mathcal{O}_{S}(F))$, we see that $\kappa(L') \geq \kappa(F) = 1$.
\end{exmple}

We first prove Theorem \ref{equalityofnumericaldimension} for smooth varieties $X$.  For convenience we arrange the definitions in a more suitable order.  Definition (1) in the following theorem is the definition of numerical dimension in \cite{bdpp04}, while (5) and (6) correspond to $\kappa_{\sigma}(L)$ and $\kappa_{\nu}(L)$ (by Remark \ref{equivalenttonakayama}) in \cite{nakayama04}.  Note that we allow varieties $W \subset \Supp(L)$ at the slight cost of using numerical restrictions in (4).

\begin{thrm} \label{numericaldimensionequalities}
Let $X$ be a smooth variety and let $L$ be a pseudo-effective divisor.  In the following, $A$ will denote some fixed sufficiently ample $\mathbb{Z}$-divisor and $W$ will range over all subvarieties of $X$ not contained in $\mathbf{B}_{-}(L)$.  Then the following quantities coincide:
\begin{enumerate}
\item $\max \{ k \in \mathbb{Z}_{\geq 0} | \langle L^{k} \rangle_{X} \neq 0 \}$.
\item $\max \{ \dim W | \langle L^{\dim W} \rangle_{X|W} > 0 \}$.
\item $\max \{ \dim W | \lim_{\epsilon \to 0} \vol_{X|W}(L + \epsilon A) > 0 \}$.
\item $\max \{ \dim W | \inf_{\phi} \vol_{\widetilde{W}}([P_{\sigma}(\phi^{*}L)]|_{\widetilde{W}}) >0 \}$
where $\phi: (\widetilde{X},\widetilde{W}) \to (X,W)$
ranges over $W$-birational models.
\item $\max \left\{ k \in \mathbb{Z}_{\geq 0} \left| \limsup_{m \to \infty} \frac{h^{0}(X,\lfloor mL \rfloor + A)}
{m^{k}} > 0 \right. \right\}$.
\item $\min \left\{ \dim W | \varsigma(L,W) = 0 \right\}$.

\noindent By convention, if $L$ is big we interpret this expression as returning $\dim(X)$.

\item $\max \{ k \in \mathbb{Z}_{\geq 0} | \,  \exists C>0 \textrm{ such that } Ct^{n-k} < \vol(L+tA) \textrm{ for all } t > 0 \}$.
\end{enumerate}
We call this common quantity the numerical dimension of $L$ and denote it by $\nu_{X}(L)$.  It only depends on the numerical class of $L$.
\end{thrm}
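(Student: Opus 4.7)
The plan is to prove the seven equalities by short chains of inequalities organized by method, with Proposition \ref{restrictedvolumeandproduct} serving as the main bridge between the positive-product formulations (1, 2) and the restricted-volume formulation (3), and Theorem \ref{positiveproductpsigmaalternative} connecting both to the birational formulation (4). The section-growth definition (5), the volume-growth definition (7), and the Nakayama-constant definition (6) will then be tied in through Proposition \ref{nakayamasformulation} and the differentiability of the volume function on the big cone.

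First I would handle the block (1)--(4). For $\nu_1 = \nu_2$, slicing by $n-k$ very general hyperplanes in a basepoint-free system gives, via Lemma \ref{restrictedproductandnefdivisors}(2),
\[
\deg\langle L^k\rangle_{X|W} \;=\; \langle L^k\rangle_X \cdot H_1 \cdots H_{n-k},
\]
which is strictly positive whenever $\langle L^k\rangle_X \neq 0$, since the latter is a nonzero pseudo-effective class; conversely, $\deg\langle L^k\rangle_{X|W} > 0$ forces $\langle L^k\rangle_X \neq 0$ by comparison with $\langle L^k\rangle_X \cdot [W]$. For $\nu_2 = \nu_3$, Proposition \ref{restrictedvolumeandproduct} applied to the $W$-big perturbation $L + \epsilon A$ gives $\vol_{X|W}(L+\epsilon A) = \deg\langle (L+\epsilon A)^{\dim W}\rangle_{X|W}$, and passing to the limit $\epsilon \to 0$ in the definition of the positive product on the pseudo-effective boundary matches the two maxima. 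For $\nu_3 = \nu_4$, the inequality $\nu_4 \leq \nu_3$ is immediate by comparing restricted volumes to pullbacks, and the reverse uses Proposition \ref{psigmaapproachesnef2} together with Theorem \ref{positiveproductpsigmaalternative} to sandwich $\deg\langle L^{\dim W}\rangle_{X|W}$ between pushforwards of $P_\sigma(\phi^*L)^{\dim W}\cdot\widetilde{W}$ on sufficiently high $W$-birational models.

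Second, I would link this block to the section-growth and volume-growth definitions. For $\nu_3 \leq \nu_5$: if $k = \dim W$ with $\lim_\epsilon \vol_{X|W}(L+\epsilon A) > 0$, then for large $m$ we have $\vol_{X|W}(L+A/m) > c > 0$, so $h^0(X|W,\lfloor mL\rfloor + A) \gtrsim cm^k/k!$ asymptotically, and this lower-bounds $h^0(X,\lfloor mL\rfloor + A)$. For $\nu_5 \leq \nu_7$, the rescaling $t = 1/m$ yields the crude bound $\vol(L+tA) \gtrsim (n!/m^n)\,h^0(X,\lfloor mL\rfloor + A)$, so section growth of order $m^k$ produces volume growth $\vol(L+tA) \gtrsim t^{n-k}$. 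For $\nu_7 \leq \nu_3$ I would invoke the differentiability of the volume function on the big cone from \cite{lm09} and \cite{bfj09}, which expresses $\frac{d}{dt}\vol(L+tA)$ in terms of a positive product with $A$; volume growth of order $t^{n-k}$ therefore forces $\langle L^k\rangle_X \neq 0$, bringing us back to $\nu_1$ via the first block and hence to $\nu_3$.

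Finally, for $\nu_5 = \nu_6$, Proposition \ref{nakayamasformulation} characterizes $\varsigma(L,W) > 0$ by the existence of sections in $|\lceil mL\rceil + A|$ with prescribed vanishing along $W$ for large $m$, while Proposition \ref{nakayamaconstantgeneralsubvariety} reduces the question to very general subvarieties of a given dimension. Combining these with the equivalences already established, the minimum $\dim W$ at which $\varsigma(L,W) = 0$ matches the largest $k$ for which $h^0(X,\lfloor mL\rfloor + A)$ grows like $m^k$. The main obstacle I expect is the polynomial-rate comparison in the $\nu_5$--$\nu_7$--$\nu_3$ triangle, which requires controlling $h^0$ and the derivative of $\vol$ uniformly near the pseudo-effective boundary; the differentiability input from \cite{lm09} and \cite{bfj09}, combined with the positive-product interpretation of $\vol_{X|W}$, is what should make this quantitative comparison tractable.
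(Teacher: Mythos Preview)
Your treatment of the block (1)--(4) is essentially the paper's, and your idea of linking (7) to the positive product via differentiability of volume is close in spirit to what the paper does (it compares (7) directly with (1) through the inequality $t^{n-k}\langle (L+tA)^{k}\rangle\cdot A^{n-k}\leq \vol(L+tA)$). The serious gap is in your step $(3)\leq(5)$. From $\vol_{X|W}(L+A/m)>c$ you cannot conclude $h^{0}(X|W,\lfloor mL\rfloor+A)\gtrsim cm^{k}/k!$: the restricted volume is an asymptotic in a \emph{second} parameter $p\to\infty$ with $m$ held fixed,
\[
\vol_{X|W}(L+A/m)=\limsup_{p\to\infty}\frac{h^{0}(X|W,\lfloor p(L+A/m)\rfloor)}{p^{k}/k!},
\]
and there is no reason this limsup is witnessed at or near $p=m$. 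The same conflation of an asymptotic quantity with a single value appears in your bound $\vol(L+tA)\gtrsim (n!/m^{n})\,h^{0}(X,\lfloor mL\rfloor+A)$ for $(5)\leq(7)$; there is no such inequality in general.

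In the paper the implication $(4)\leq(5)$ is by far the hardest step and requires two ingredients you have not invoked. First, one shows (Lemma~\ref{findingbigdivisoronlimit}, Corollary~\ref{havebigdivisor}) that if $\inf_{\phi}\vol_{\widetilde W}(P_{\sigma}(\phi^{*}L)|_{\widetilde W})>0$ then there is a single ample divisor $H$ on $W$ with $P_{\sigma}(\phi^{*}L)|_{\widetilde W}-\phi^{*}H$ big on \emph{every} $W$-birational model; this already uses the derivative of $\vol$ (Lemma~\ref{volumederivative}). Second, one proves a lifting theorem for twisted linear series via Kawamata--Viehweg vanishing (Proposition~\ref{restrictedcomparison}): sections of $K_{\widetilde W}+\lceil N|_{\widetilde W}\rceil+A|_{\widetilde W}$ extend to $\widetilde X$ when $N$ is big and nef. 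Combining this lifting with the ample $H$ and Proposition~\ref{twistedsectionexistence} on $\widetilde W$ is what produces the genuine lower bound $h^{0}(X,\lceil mL\rceil+A')\geq Cm^{k}$. Without a vanishing-and-lifting argument of this kind, the passage from positivity of restricted volumes to growth of twisted sections does not go through.
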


We will prove Theorem \ref{numericaldimensionequalities} using a cycle of inequalities.  The equivalence of (1) - (4) is an easy consequence of the properties of the positive product and the inequality (5) $\leq$ (6) was proved in \cite{nakayama04}, Proposition V.2.22.  The other inequalities will require more work.

\begin{proof}
$(1) = (2)$. Let $H_{1},\ldots,H_{d-k}$ represent very general elements of a very ample linear system.  Since $\langle L^{k} \rangle_{X}$ is in the closure of the cone generated by effective cycles, it is non-zero if and only if $\langle L^{k} \rangle_{X} \cdot H_{1} \cdot \ldots \cdot H_{d-k} > 0$.  By Proposition \ref{restrictedproductandnefdivisors}, this is equivalent to $\langle L^{k} \rangle_{X|H_{1} \cap \ldots \cap H_{d-k}} > 0$.  Thus $(1) \leq (2)$.  By Proposition \ref{ampleintersectiondominates}, the same argument in reverse shows that $(2) \leq (1)$.

\bigskip

$(2) = (3)$.  Proposition \ref{restrictedvolumeandproduct} shows that the conditions set on $W$ in (2) and (3) are the same.

\bigskip

$(3) = (4)$.   Proposition \ref{restrictedvolumeandproduct} allows us to translate between restricted volume and the restricted positive product in the $V$-big case.  Thus, Theorem \ref{positiveproductpsigmaalternative} implies that
\begin{equation*}
\vol_{X|W}(L + \epsilon A) = \inf_{\phi: \widetilde{X} \to X} \vol_{\widetilde{W}}([P_{\sigma}(\phi^{*}(L + \epsilon A))]|_{\widetilde{W}})
\end{equation*}
where $\phi: (\widetilde{X},\widetilde{W}) \to (X,W)$ varies over $W$-birational models.  Consider
\begin{equation*}
\lim_{\epsilon \to 0} \vol_{X|W}(L + \epsilon A) = \lim_{\epsilon \to 0} \inf_{\phi: \widetilde{X} \to X} \vol_{\widetilde{W}}([P_{\sigma}(\phi^{*}(L + \epsilon A))]|_{\widetilde{W}}).
\end{equation*}
Note that on any model $\vol_{\widetilde{W}}([P_{\sigma}(\phi^{*}(L+\epsilon A))]|_{\widetilde{W}})$ is non-decreasing and continuous as a function of $\epsilon$.  Thus, on the right hand side we may commute the limit with the infimum.

\bigskip

$(4) \leq (5)$. The first step is to show that there is some ample divisor on $W$ whose pullback lies beneath each restriction $P_{\sigma}(\phi^{*}L)|_{\widetilde{W}}$.  Using this ample divisor we find a lower bound for the growth of sections of a certain twisted linear series on $W$.  The last step is to prove a lifting theorem for twisted linear series to conclude that $h^{0}(\lfloor mL \rfloor + A)$ satisfies the necessary growth conditions.

\begin{lem} \label{volumederivative}
Let $X$ be a smooth variety of dimension $n$, $L$ a big divisor and $N$ a general element of a big basepoint free linear system.  Then $\vol(L - N) \geq \vol(L) - n \, \vol_{X|N}(L)$.
\end{lem}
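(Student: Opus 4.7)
The plan is to bound $\vol(L)-\vol(L-N)$ from above by $n\,\vol_{X|N}(L)$ via a standard filtration argument on global sections. For each positive integer $m$, multiplication by a fixed section cutting out $N$ gives a chain
\begin{equation*}
H^{0}(X,mL)\supset H^{0}(X,mL-N)\supset H^{0}(X,mL-2N)\supset\cdots\supset H^{0}(X,m(L-N)).
\end{equation*}
For each $k$, the short exact sequence
\begin{equation*}
0\to\mathcal{O}_{X}(mL-(k+1)N)\to\mathcal{O}_{X}(mL-kN)\to\mathcal{O}_{N}\bigl((mL-kN)|_{N}\bigr)\to 0
\end{equation*}
identifies the $k$-th successive quotient with $H^{0}(X|N,mL-kN)$ in the sense of Definition \ref{restrictedvolumedefinition}. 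Summing these quotients yields
\begin{equation*}
h^{0}(X,mL)-h^{0}(X,m(L-N))=\sum_{k=0}^{m-1}h^{0}(X|N,mL-kN).
\end{equation*}

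The second step is to absorb each term of the sum into $h^{0}(X|N,mL)$. Since $|N|$ is basepoint free, one can choose a section $t\in H^{0}(X,N)$ whose zero divisor is different from $N$, so that $t|_{N}$ is a nonzero section of $\mathcal{O}_{N}(N|_{N})$. Multiplication by $t^{k}$ then fits into a commutative diagram of restriction maps that descends to a well-defined map $H^{0}(X|N,mL-kN)\to H^{0}(X|N,mL)$; this map is injective because $(t|_{N})^{k}$ is a nonzero element of an integral domain acting on sections of $N$. Combining these two steps gives the key uniform estimate
\begin{equation*}
h^{0}(X,mL)-h^{0}(X,m(L-N))\leq m\cdot h^{0}(X|N,mL).
\end{equation*}

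Dividing by $m^{n}/n!$ and taking limits concludes the argument: the left-hand side converges to $\vol(L)-\vol(L-N)$ (where the subtracted term is interpreted as $0$ if $L-N$ is not pseudo-effective), while the right-hand side has $\limsup$ equal to $n\,\vol_{X|N}(L)$ because $h^{0}(X|N,mL)/(m^{n-1}/(n-1)!)$ tends to $\vol_{X|N}(L)$ by definition. Rearranging gives the asserted inequality.

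The only real subtlety is the construction of the injection $H^{0}(X|N,mL-kN)\hookrightarrow H^{0}(X|N,mL)$: one must check that multiplication by $t^{k}$ carries \emph{restricted} sections to restricted sections and is injective on the image of restriction. Both of these follow from $t|_{N}\neq 0$, which is guaranteed by the basepoint freeness hypothesis on $|N|$ together with the genericity of $N$, so no pseudo-effectivity hypothesis on $L-N$ is needed.
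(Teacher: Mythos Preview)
Your argument is correct and takes a genuinely different, more elementary route than the paper. The paper invokes the differentiability of the volume function on the big cone from \cite{bfj09}: it uses the formula $\frac{d}{dt}\vol(L-tN) = -n\,\vol_{X|N}(L-tN)$, bounds the right-hand side by $-n\,\vol_{X|N}(L)$, and integrates over $t\in[0,\alpha]$ where $\alpha\leq 1$ is the pseudo-effective threshold of $L$ in the direction of $N$. Your filtration-and-injection argument bypasses this deep input entirely and needs nothing beyond the definitions and the fact that $\vol$ and $\vol_{X|N}$ are genuine limits (not merely $\limsup$'s) for big and $N$-big divisors respectively. The paper's approach does yield the marginally sharper inequality $\vol(L-N)\geq\vol(L)-n\alpha\,\vol_{X|N}(L)$, but this refinement is never used. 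One small caveat: as written, your chain of inclusions assumes $L$ is integral, whereas in this paper ``divisor'' means $\mathbb{R}$-Cartier; the fix --- run the argument for $\mathbb{Q}$-divisors after clearing denominators and extend by continuity of $\vol$ and $\vol_{X|N}$ on the big and $N$-big cones --- is routine.
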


The easiest way to demonstrate this is to appeal to the results of \cite{bfj09}.

\begin{proof}
Let $\alpha = \sup_{t \in [0,1]}\{ L - tN \textrm{ is pseudo-effective}\}$.  Note that $1 \geq \alpha$ and since $L$ is big we have $0 < \alpha$.  We will prove the stronger result $\vol(L-N) \geq \vol(L) - n \alpha \, \vol_{X|N}(L)$.

By \cite{bfj09} Corollary C the function $\vol$ is continuously differentiable on the big cone.  More precisely, for $t \in (0,\alpha)$ we have
\begin{equation*}
\frac{d}{dt}\vol(L - tN) = -n \, \vol_{X|N}(L-tN)
\end{equation*}
Note that $\vol_{X|N}(L - tN) \leq \vol_{X|N}(L)$ for any $t \geq 0$.  Thus for every $t \in (0,\alpha)$ there is an inequality
\begin{equation*}
\frac{d}{dt}\vol(L - tN) \geq -n \, \vol_{X|N}(L).
\end{equation*}
Integrating both sides over $t \in [0,\alpha]$, we obtain $\vol(L- \alpha N) \geq \vol(L) - n \alpha \, \vol_{X|N}(L)$.  But if $\alpha \neq 1$, then $\vol(L-\alpha N) = 0 = \vol(L-N)$, finishing the proof.
\end{proof}

\begin{lem} \label{findingbigdivisoronlimit}
Let $W$ be a smooth variety.  Suppose that for every smooth birational model $\phi: \widetilde{W} \to W$ we associate a divisor $L_{\widetilde{W}}$ so that for any birational map $\psi: \widehat{W} \to \widetilde{W}$ we have $\psi^{*}L_{\widetilde{W}} \geq L_{\widehat{W}}$.  Suppose furthermore that
\begin{equation*}
\inf_{\widetilde{W}} \vol(L_{\widetilde{W}}) > 0.
\end{equation*}
There is some ample divisor $H$ on $W$ and some constant $\epsilon$ such that $\vol(L_{\widetilde{W}} - \phi^{*}H) > \epsilon$ for every $\phi$.
\end{lem}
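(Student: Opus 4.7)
The plan is to apply Lemma \ref{volumederivative} on each model $\widetilde{W}$, combined with monotonicity of the positive product, to obtain a uniform volume lower bound. Let $d = \dim W$, set $c := \inf_{\widetilde{W}} \vol(L_{\widetilde{W}}) > 0$, and fix a very ample $\mathbb{Z}$-divisor $A$ on $W$. First, from the hypothesis applied with $\psi = \phi$ (taking the identity as the outer model), the divisor $\phi^{*}L_{W} - L_{\widetilde{W}}$ is effective for every $\widetilde{W}$. Iterated super-additivity of the positive product then yields the monotonicity
\[
\langle L_{\widetilde{W}}^{d-1} \rangle_{\widetilde{W}} \preceq \langle (\phi^{*}L_{W})^{d-1} \rangle_{\widetilde{W}}.
\]
Intersecting with the nef class $\phi^{*}A$, then combining the projection formula with the push-pull compatibility of the positive product (Proposition \ref{restrictedpositiveproductadmissiblemodels} applied to the identity-degree model $(\widetilde{W},\widetilde{W}) \to (W,W)$), yields the key uniform upper bound
\[
\langle L_{\widetilde{W}}^{d-1} \rangle \cdot \phi^{*}A \;\leq\; \langle (\phi^{*}L_{W})^{d-1} \rangle \cdot \phi^{*}A \;=\; \langle L_{W}^{d-1} \rangle \cdot A \;=:\; K,
\]
a constant independent of $\widetilde{W}$.

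Next, I would apply Lemma \ref{volumederivative} to convert this into a volume estimate. To handle rational scaling, write $\delta = p/q$ with positive integers $p,q$, take $L' := qL_{\widetilde{W}}$ (which is big since $L_{\widetilde{W}}$ is), and take $N$ to be a general element of the big basepoint free linear system $|p\phi^{*}A|$ on $\widetilde{W}$ (general $N_{0} \in |pA|$ avoids the codimension $\geq 2$ centers of $\phi$-exceptional divisors, so $\phi^{*}N_{0}$ is irreducible). By Lemma \ref{restrictedproductandnefdivisors}(2) combined with Proposition \ref{restrictedvolumeandproduct} one has $\vol_{\widetilde{W}|N}(M) = \langle M^{d-1}\rangle \cdot N$ for any $N$-big $M$. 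Using this, the $d$-homogeneity of $\vol$, the $(d-1)$-homogeneity of $\vol_{\widetilde{W}|N}$, and the numerical equivalence $N \equiv p\phi^{*}A$, the inequality from Lemma \ref{volumederivative} applied to $qL_{\widetilde{W}}$ and $N$ divides through by $q^{d}$ to give
\[
\vol(L_{\widetilde{W}} - \delta \phi^{*}A) \;\geq\; \vol(L_{\widetilde{W}}) - d\delta\,\langle L_{\widetilde{W}}^{d-1}\rangle \cdot \phi^{*}A \;\geq\; c - d\delta K.
\]

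Finally, choose any positive rational $\delta$ with $d\delta K < c/2$ and set $H := \delta A$, an ample $\mathbb{Q}$-divisor on $W$. The estimate then gives $\vol(L_{\widetilde{W}} - \phi^{*}H) \geq c/2$ for every model $\widetilde{W}$, so any constant less than $c/2$ works for the conclusion. The main technical obstacle is the scaling argument bridging Lemma \ref{volumederivative} (stated for integer $N$) to the rational direction $\delta\phi^{*}A$; once one has the identity relating $\vol_{\widetilde{W}|N}$ to the intersection product and the homogeneity relations, the bookkeeping is routine. The monotonicity of the positive product under addition of an effective divisor also needs a brief iterated super-additivity argument, but otherwise the estimate flows directly from the tools already developed in the paper.
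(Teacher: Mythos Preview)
Your proof is correct and follows essentially the same strategy as the paper's: bound the restricted volume $\vol_{\widetilde{W}|N}(L_{\widetilde{W}})$ uniformly in terms of the fixed quantity $\vol_{W|H'}(L_{W})$ (equivalently $\langle L_{W}^{d-1}\rangle \cdot A$), then apply Lemma~\ref{volumederivative} after integer rescaling. The only cosmetic difference is that you route the uniform bound through the positive product and super-additivity, whereas the paper works directly with restricted volumes and invokes \cite{elmnp06} Lemma~2.4 for birational invariance; these are equivalent. One small remark: your appeal to Lemma~\ref{restrictedproductandnefdivisors}(2) needs $N$ \emph{very general} in $|p\phi^{*}A|$, not just general---but since $H^{0}(\widetilde{W},p\phi^{*}A)=H^{0}(W,pA)$ you may simply take $N_{0}\in|pA|$ very general, and your argument goes through unchanged.
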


Note that $\vol(L_{\widetilde{W}}) \geq \vol(L_{\widehat{W}})$ for every higher model $\widehat{W}$.

\begin{proof}
For convenience set $n = \dim(W)$ and $\tau = \inf \vol(L_{\widetilde{W}})$.  Fix a very ample divisor $H$ on $W$.  It suffices to show that there is some constant $k$ such that for any smooth model $\phi: \widetilde{W} \to W$ there is an $H' \equiv H$ so that
\begin{equation*}
\vol\left(L_{\widetilde{W}} - \frac{1}{k}\phi^{*}H' \right) > \tau/2.
\end{equation*}
Choose a prime very ample divisor $H' \equiv H$ sufficiently general so that $\psi^{*}H'$ is equal to the strict transform of $H'$.  Note that 
\begin{equation*}
\vol_{\widetilde{W}|\phi^{*}H'}(L_{\widetilde{W}}) \leq \vol_{\widetilde{W}|\phi^{*}H'}(\phi^{*}L_{W})
\end{equation*}
and by \cite{elmnp06} Lemma 2.4 the latter quantity is equal to $\vol_{W|H'}(L_{W})$.  Choose some constant $k$ so that
\begin{equation*}
\frac{1}{k} \vol_{W|H'}(L_{W}) < \frac{\tau}{2n}.
\end{equation*}
(Note that by \cite{bfj09} Proposition 4.8 $k$ is independent of the choice of $H'$, and thus also independent of the choice of $\widetilde{W}$.)  Lemma \ref{volumederivative} implies that
\begin{align*}
\vol(kL_{\widetilde{W}} - \phi^{*}H') & \geq \vol(kL_{\widetilde{W}}) - n \, \vol_{\widetilde{W}|\phi^{*}H'}(kL_{\widetilde{W}}) \\
& \geq \vol(kL_{\widetilde{W}}) - n \, \vol_{\widetilde{W}|\phi^{*}H'}(k\phi^{*}L_{W})
\end{align*}
Rescaling the above expression by $k$ we find
\begin{equation*}
\vol \left(L_{\widetilde{W}} - \frac{1}{k}\phi^{*}H' \right) \geq \vol(L_{\widetilde{W}}) - \frac{n}{k} \, \vol_{\widetilde{W}|\phi^{*}H'}(\phi^{*}L_{W}) > \tau/2
\end{equation*}
proving the claim.
\end{proof}

In our situation, we find:

\begin{cor} \label{havebigdivisor}
Assume that $W$ is a very general intersection of very ample divisors such that $\inf_{\phi} \vol_{\widetilde{W}}(P_{\sigma}(\phi^{*}L)|_{\widetilde{W}}) > 0$ where $\phi: (\widetilde{X},\widetilde{W}) \to (X,W)$ varies over all $W$-birational models.  Then there is an ample divisor $H$ on $W$ so that for any $W$-birational model $\phi: \widetilde{X} \to X$ we have
\begin{equation*}
\vol_{\widetilde{W}}(P_{\sigma}(\phi^{*}L)|_{\widetilde{W}} - \phi^{*}H) > 0.
\end{equation*}
\end{cor}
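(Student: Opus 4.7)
The strategy is to adapt the proof of Lemma \ref{findingbigdivisoronlimit}, replacing its family indexed by birational models of $W$ with the family indexed by $W$-birational models $\phi\colon\widetilde{X}\to X$. Set $\tau:=\inf_{\phi}\vol_{\widetilde{W}}(P_{\sigma}(\phi^{*}L)|_{\widetilde{W}})>0$. Since $W$ is a very general intersection of very ample divisors, we may assume $W\not\subset \mathbf{B}_{-}(L)\cup\Supp(L)\cup\Sing(X)$, so $P_{\sigma}(L)|_{W}$ is a well-defined $\mathbb{R}$-Cartier divisor on $W$ (avoiding the supports of both $L$ and $N_{\sigma}(L)$), and the case $\phi=\mathrm{id}$ of the hypothesis forces $\vol_{W}(P_{\sigma}(L)|_{W})\geq\tau>0$. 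This divisor plays the role of the ambient $L_{W}$ in Lemma \ref{findingbigdivisoronlimit}.

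The key uniform bound is supplied by Nakayama's birational statement $N_{\sigma}(\phi^{*}L)\geq\phi^{*}N_{\sigma}(L)$ (Proposition 3.4 above). Restricting to the strict transform $\widetilde{W}$ yields
\begin{equation*}
P_{\sigma}(\phi^{*}L)|_{\widetilde{W}}\;\leq\;(\phi|_{\widetilde{W}})^{*}\bigl(P_{\sigma}(L)|_{W}\bigr)
\end{equation*}
for every $W$-birational model $\phi$, which is precisely the compatibility needed to emulate the proof of Lemma \ref{findingbigdivisoronlimit} in our setting.

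Fix a very ample divisor $H$ on $W$. For any $W$-birational $\phi$, choose $H'\equiv H$ sufficiently general that $(\phi|_{\widetilde{W}})^{*}H'$ is the strict transform of $H'$, and apply Lemma \ref{volumederivative} on $\widetilde{W}$ to the big divisor $P_{\sigma}(\phi^{*}L)|_{\widetilde{W}}$ together with the big and basepoint-free class $(\phi|_{\widetilde{W}})^{*}H'$, rescaled by a positive integer $k$. This gives
\begin{equation*}
\vol_{\widetilde{W}}\!\bigl(P_{\sigma}(\phi^{*}L)|_{\widetilde{W}}-\tfrac{1}{k}\phi^{*}H'\bigr)\;\geq\;\vol_{\widetilde{W}}(P_{\sigma}(\phi^{*}L)|_{\widetilde{W}})-\tfrac{n}{k}\vol_{\widetilde{W}|\phi^{*}H'}(P_{\sigma}(\phi^{*}L)|_{\widetilde{W}}),
\end{equation*}
where $n=\dim W$. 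Monotonicity of the restricted volume together with the displayed inequality in the previous paragraph, followed by the admissible-pullback invariance of the restricted volume (\cite{elmnp06} Proposition 2.4), bounds the subtracted term above by $\tfrac{n}{k}\vol_{W|H'}(P_{\sigma}(L)|_{W})$, a quantity depending only on the numerical class of $H$ by \cite{bfj09} Proposition 4.8, and hence independent of $\phi$. Choosing $k$ large enough that this constant is less than $\tau/2$, and using numerical invariance of $\vol_{\widetilde{W}}$ to replace $\phi^{*}H'$ by $\phi^{*}H$, we obtain $\vol_{\widetilde{W}}(P_{\sigma}(\phi^{*}L)|_{\widetilde{W}}-\phi^{*}(H/k))>\tau/2$ uniformly in $\phi$; an integer multiple of the ample $\mathbb{Q}$-divisor $H/k$ is the required divisor.

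The main obstacle is that the genericity constraints on $H'$ depend on $\phi$, preventing a single prime divisor from serving the entire family at once; this is resolved by observing that every volume appearing in the argument depends only on the numerical class of $H$, so the pointwise estimates assemble into a uniform bound. The Nakayama birational inequality is what supplies the corresponding uniform upper bound on $P_{\sigma}(\phi^{*}L)|_{\widetilde{W}}$ and thereby substitutes for the compatibility hypothesis of Lemma \ref{findingbigdivisoronlimit}.
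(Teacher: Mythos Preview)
Your proof is correct and follows essentially the same approach as the paper: the paper simply checks that the family $\{P_{\sigma}(\phi^{*}L)|_{\widetilde{W}}\}$ satisfies the hypotheses of Lemma~\ref{findingbigdivisoronlimit} (using Nakayama's inequality $N_{\sigma}(\phi^{*}L)\geq\phi^{*}N_{\sigma}(L)$ for the comparison condition, exactly as you do) and then invokes that lemma, whereas you unpack the lemma's proof in situ. One minor correction: the Nakayama inequality you cite is Proposition~3.6 in the paper's numbering (\cite{nakayama04}, Theorem~III.5.16), not Proposition~3.4.
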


\begin{proof}
Consider the set of divisors $P_{\sigma}(\phi^{*}L)|_{\widetilde{W}}$.  Since $N_{\sigma}(\phi^{*}L) \geq_{\widetilde{W}} 0$, they satisfy the comparison condition of Lemma \ref{findingbigdivisoronlimit}.  By assumption the inf condition of Lemma \ref{findingbigdivisoronlimit} also holds.  The lemma yields an appropriate ample divisor $H$ on $W$.
\end{proof}

Our next goal is a lifting theorem for twisted linear series.

\begin{prop} \label{restrictedcomparison}
Let $X$ be a smooth variety and let $L$ be an effective divisor.  Suppose that $N$ is a big and nef divisor satisfying $0 \leq N \leq L$ such that $N$ has simple normal crossing support.  Let $|B|$ be a basepoint-free linear system defining a birational morphism on $X$.  For sufficiently general elements $B_{1},\ldots,B_{k} \in |B|$ we have an inequality
\begin{equation*}
h^{0}(W,\mathcal{O}_{W}(K_{W} + \lceil N|_{W} \rceil +  A|_{W}) \leq h^{0}(X|W,\mathcal{O}_{X}(K_{X} + \lceil L \rceil + B_{1} + \ldots + B_{k} + A))
\end{equation*}
where $W$ is the complete intersection $B_{1} \cap \ldots \cap B_{k}$ and $A$ is any nef $\mathbb{Z}$-divisor on $X$.
\end{prop}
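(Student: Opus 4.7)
My plan is to establish the inequality by a standard lifting argument using iterated applications of Kawamata--Viehweg vanishing. First, I would reduce to replacing $\lceil L \rceil$ by $\lceil N \rceil$ on the right-hand side. Since $0 \leq N \leq L$, the integer divisor $D := \lceil L \rceil - \lceil N \rceil$ is effective; for sufficiently general $B_{i}$ the subvariety $W$ is not contained in $\Supp(D)$, so multiplication by a defining section of $D$ gives a restriction-compatible injection
\[
H^{0}(X, K_{X} + \lceil N \rceil + B_{1} + \ldots + B_{k} + A) \hookrightarrow H^{0}(X, K_{X} + \lceil L \rceil + B_{1} + \ldots + B_{k} + A),
\]
yielding $h^{0}(X|W, K_{X} + \lceil N \rceil + B_{1} + \ldots + B_{k} + A) \leq h^{0}(X|W, K_{X} + \lceil L \rceil + B_{1} + \ldots + B_{k} + A)$. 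It therefore suffices to show
\[
h^{0}(W, K_{W} + \lceil N|_{W} \rceil + A|_{W}) \leq h^{0}(X|W, K_{X} + \lceil N \rceil + B_{1} + \ldots + B_{k} + A).
\]

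Second, I would proceed by induction on the number of peeled-off divisors. Setting $W_{j} := B_{1} \cap \ldots \cap B_{j}$ with $W_{0} = X$ and $W_{k} = W$, define
\[
M_{j} := K_{W_{j}} + \lceil N|_{W_{j}} \rceil + \sum_{i > j} B_{i}|_{W_{j}} + A|_{W_{j}}.
\]
For sufficiently general $B_{i}$, Bertini guarantees that each $W_{j}$ is smooth, that $N|_{W_{j}}$ has SNC support, and that $\lceil N|_{W_{j}} \rceil|_{W_{j+1}} = \lceil N|_{W_{j+1}} \rceil$; adjunction then yields $M_{j}|_{W_{j+1}} = M_{j+1}$. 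The short exact sequence
\[
0 \to \mathcal{O}_{W_{j-1}}(M_{j-1} - B_{j}|_{W_{j-1}}) \to \mathcal{O}_{W_{j-1}}(M_{j-1}) \to \mathcal{O}_{W_{j}}(M_{j}) \to 0
\]
reduces the induction to verifying $H^{1}(W_{j-1}, M_{j-1} - B_{j}|_{W_{j-1}}) = 0$ for every $j$.

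Finally, I would obtain this vanishing from Kawamata--Viehweg. A direct computation gives $M_{j-1} - B_{j}|_{W_{j-1}} - K_{W_{j-1}} = \lceil R \rceil$ where $R := (N + \sum_{i > j} B_{i} + A)|_{W_{j-1}}$. The class $R$ is nef as a restriction of a sum of nef classes, and big because $N + \sum_{i > j} B_{i} + A$ is big on $X$ and its restriction to a sufficiently general complete intersection of elements of $|B|$ remains big (writing $N \sim_{\mathbb{R}} H + E$ with $H$ ample and $E$ effective, one chooses $W_{j-1}$ to avoid $\Supp(E)$). The fractional part of $R$ coincides with that of $N|_{W_{j-1}}$ and so has SNC support by construction. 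Kawamata--Viehweg then yields $H^{1}(W_{j-1}, K_{W_{j-1}} + \lceil R \rceil) = 0$, completing the induction. The principal technical obstacle is arranging all generality hypotheses on $B_{1}, \ldots, B_{k}$ compatibly---smoothness of every $W_{j}$, SNC compatibility of $N$ with the $B_{i}$, commutativity of ceilings with restriction, and preservation of bigness along each $W_{j}$---but each is guaranteed by standard Bertini-type arguments applied to the basepoint-free linear system $|B|$.
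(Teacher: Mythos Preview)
Your proposal is correct and follows essentially the same route as the paper: both argue by iterated Kawamata--Viehweg vanishing along the filtration $W_{j} = B_{1} \cap \ldots \cap B_{j}$, using that $N|_{W_{j}} + A|_{W_{j}} + \sum_{i>j} B_{i}|_{W_{j}}$ is big and nef with SNC fractional part to lift sections step by step. The only cosmetic difference is that the paper uses the inequality $\lceil N|_{W_{j}} \rceil|_{W_{j+1}} \geq \lceil N|_{W_{j+1}} \rceil$ rather than your equality, and leaves the passage from $\lceil N \rceil$ to $\lceil L \rceil$ implicit.
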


\begin{proof}
For convenience define $W_{j} := B_{1} \cap \ldots \cap B_{j}$ and $M_{i} := B_{i+1} + \ldots + B_{k}$.  Note that since the $B_{i}$ are sufficiently general we may assume that each $W_{j}$ is smooth, that $N \geq_{W_{j}} 0$, and that $N|_{W_{j}}$ has simple normal crossing support.  Note furthermore $B$ is big and nef, so that $M_{i}|_{W_{j}}$ is also a big and nef divisor for any $i$ and $j$.

Kawamata-Viehweg vanishing implies that we have surjections
\begin{align*}
H^{0}(W_{i},\mathcal{O}_{W_{i}}&(K_{W_{i}} + \lceil N|_{W_{i}} \rceil + (A + M_{i})|_{W_{i}})) \to \\
& H^{0}(W_{i+1},\mathcal{O}_{W_{i+1}}(K_{W_{i+1}} + \lceil N|_{W_{i}} \rceil|_{W_{i+1}} + (A+M_{i+1})|_{W_{i+1}}))
\end{align*}
Furthermore since $N \geq_{W_{i}} 0$ for every $i$ we have $\lceil N|_{W_{i}} \rceil|_{W_{i+1}} \geq \lceil N|_{W_{i+1}} \rceil$.  Thus by induction we obtain
\begin{align*}
h^{0}(X|W_{i},\mathcal{O}_{X}&(\lceil N \rceil + (K_{X} + A + B_{1} + \ldots + B_{k}))) \\
& \geq h^{0}(W_{i},\mathcal{O}_{W_{i}}(\lceil N|_{W_{i}} \rceil + (K_{X} + A+ B_{1} + \ldots + B_{k})|_{W_{i}})).
\end{align*}
When $i=k$, we obtain the desired statement.
\end{proof}

We now finish the proof of the inequality (4) $\leq$ (5).   Set $k$ to be the value of (4).  Fix an ample divisor $A$ on $X$ as in Theorem \ref{restrictedbaselocuscountable} so that for any $m$ there is an $L_{m} \sim \lceil mL \rceil + A$ such that $L_{m} \geq 0$.

For each $L_{m}$, we can apply Proposition \ref{psigmaapproachesnef} to find an effective divisor $G_{m}$, a countable sequence of maps $\phi_{i,m}$, and a big and nef divisor $N_{i,m}$ satisfying
\begin{equation*}
N_{i,m} \leq P_{\sigma}(\phi_{i,m}^{*}L_{m}) \leq N_{i,m} +\frac{1}{i} \phi_{i,m}^{*}G_{m}.
\end{equation*}
We may of course assume that each $N_{i,m}$ has simple normal crossing support and each $\phi_{i,n}$ is a composition of blow-ups along smooth centers.

Note that the set of maps $\phi_{i,m}$ is countable as $m$ and $i$ vary.  Fix a very ample linear system $|B|$ on $X$.  We can choose very general elements $B_{1},\ldots,B_{k} \in |B|$ so that the $\phi_{i,m}^{*}B_{j}$ satisfy the conditions of Proposition \ref{restrictedcomparison} for each $\widetilde{X}_{i,m}$ and $N_{i,m}$ simultaneously.  We may also choose the $B_{j}$ sufficiently general so that the strict transform of $B_{j}$ over $\phi_{i,m}$ is the same as the pullback for every $i$ and $m$.  Set $W = B_{1} \cap \cdots \cap B_{k}$.   Then each $\phi_{i,m}$ is $W$-birational and $\widetilde{W}_{i,m,j} = \phi_{i,m}^{*}B_{1} \cap \ldots \cap \phi_{i,m}^{*}B_{j}$ is smooth for every $j$ between $1$ and $k$.

Choose an ample divisor $H$ on $W$ as in Corollary \ref{havebigdivisor}.  For each $G_{m}$, choose a sufficiently small $\epsilon_{m}>0$ so that $H-\epsilon_{m}G_{m}|_{W}$ is pseudo-effective.  By choosing $i > 1/\epsilon_{m}$, we find models $\phi_{m}: \widetilde{X}_{m} \to X$ so that
\begin{equation*}
N_{m} \leq_{\widetilde{W}_{m}} P_{\sigma}(\phi_{m}^{*}L_{m}) \leq_{\widetilde{W}_{m}} N_{m} + \epsilon_{m} \phi_{m}^{*}G_{m}.
\end{equation*}
Thus
\begin{align*}
N_{m}|_{\widetilde{W}_{m}} - (m-1)\phi_{m}^{*}H 
& \geq (P_{\sigma}(\phi_{m}^{*}L_{m})  - \epsilon_{m}\phi_{m}^{*}G_{m})|_{\widetilde{W}_{m}} - (m - 1) \phi_{m}^{*}H \\
& \geq (P_{\sigma}(\phi_{m}^{*}L_{m})  - P_{\sigma}(\phi_{m}^{*}mL))|_{\widetilde{W}_{m}} \\
& \qquad + m(P_{\sigma}(\phi_{m}^{*}L)|_{\widetilde{W}_{m}} -  \phi_{m}^{*}H) \\
& \qquad +  \phi_{m}^{*}(H - \epsilon_{m}G_{m}|_{W}).
\end{align*}
We analyze this last sum term by term.  Since $L_{m}-mL$ is $W$-pseudo-effective and $N_{\sigma}(\phi_{m}^{*}L) \geq_{\widetilde{W}_{m}} 0$, the first term is pseudo-effective by Proposition \ref{vbignessandpsigma}.  The conclusion of Corollary \ref{havebigdivisor} is that the second term is big.  The third term is also pseudo-effective by construction.  Thus $D_{m} := N_{m}|_{\widetilde{W}_{m}} - (m-1)\phi_{m}^{*}H$ is big. 

Fix a very ample divisor $M$ on $X$.  Then
\begin{align*}
h^{0}&(\widetilde{W_{m}},\mathcal{O}_{\widetilde{W_{m}}} (K_{\widetilde{W_{m}}} + (k+2)\phi_{m}^{*}M|_{W} + \lceil N_{m}|_{\widetilde{W}_{m}} \rceil)) \\ & \geq h^{0}(\widetilde{W_{m}},\mathcal{O}_{\widetilde{W_{m}}}(K_{\widetilde{W_{m}}} + (k+2)\phi_{m}^{*}M|_{W} + \lceil D_{m} \rceil + \lfloor (m-1)\phi_{m}^{*}H \rfloor)) \\
& \geq h^{0}(W,\lfloor (m-1)H \rfloor) \textrm{ by Proposition \ref{twistedsectionexistence}} \\
& \geq Cm^{k}
\end{align*}
for some constant $C>0$ and for $m$ sufficiently large.

We conclude by applying Proposition \ref{restrictedcomparison}.  We have already chosen the divisors $B_{1},B_{2},\ldots,B_{k}$ sufficiently general so that their pullbacks satisfy the conditions of the theorem.  
For convenience define $A' = B_{1} + \ldots + B_{k}$.  Lemma \ref{restrictedcomparison} shows that the dimensions of the spaces of restricted sections
\begin{equation*}
h^{0}(\widetilde{X}_{m}|\widetilde{W}_{m},\mathcal{O}_{\widetilde{X}_{m}}(K_{\widetilde{X}_{m}} + \phi_{m}^{*}(L_{m} + A' + (k+2)\phi^{*}M))) > Cm^{k}
\end{equation*}
for some constant $C>0$ and for sufficiently large $m$.  Since $K_{\widetilde{X}_{m}/X}$ is $\phi_{m}$-exceptional, these dimensions are equal to
\begin{align*}
h^{0}(X|W,\mathcal{O}_{X}&(K_{X} + L_{m} + A' + (k+2)\phi^{*}M)) \\
& = h^{0}(X|W,\mathcal{O}_{X}(K_{X} + \lceil mL \rceil + A + A' + (k+2)\phi^{*}M))
\end{align*} 
Thus $h^{0}(X,\mathcal{O}_{X}(K_{X} + \lceil mL \rceil + A + A' + (k+2)\phi^{*}M))$ is also bounded below by $Cm^{k}$ for sufficiently large $m$.

\bigskip

$(5) \leq (6)$.  This is proved in \cite{nakayama04} Proposition V.2.22.

\bigskip

$(6) \leq (1)$.  By Proposition \ref{nakayamaconstantgeneralsubvariety}, we may assume that $W$ is a very general intersection of very ample divisors.  We need to consider the $0$-case separately.  Note that (1) is $0$ precisely when $P_{\sigma}(L)$ is numerically trivial.  This means that (6) is also $0$.  Thus, we can prove that $(6) \leq (1)$ by considering the case where (6) is at least $2$ and (1) is at least $1$.

\bigskip

Suppose for a contradiction that (1) is less than the value of (6).  For convenience we set $k$ to be the value of (1).  Let $W$ be a $k$-dimensional intersection of very general very ample divisors.  Set $\tau = \varsigma(L,W) > 0$, and let $\phi: Y \to X$ be the blow-up of $W$ with exceptional divisor $E$.

Fix a very ample divisor $H$ on $Y$.  We begin by analyzing $\phi^{*}L + \epsilon H$.  Choose models $\psi_{i}: \widetilde{Y}_{i} \to Y$ computing the positive products $\langle (\phi^{*}L + \epsilon H)^{k} \rangle_{Y|E}$ and $\langle (\phi^{*}L + \epsilon H)^{k+1} \rangle_{Y}$.  Choose big and nef divisors $A_{i} \leq \psi_{i}^{*}(\phi^{*}L + \epsilon H)$ on $\widetilde{Y}_{i}$ that compute the product.  By Proposition \ref{nakayamaconstantcompatibility}, $P_{\sigma}(\psi_{i}^{*}(\phi^{*}L + \epsilon H)) - \tau \psi_{i}^{*}E$ is always pseudo-effective, so by choosing $\psi_{i}$ appropriately we may also assume that $A_{i} - \frac{\tau}{2}\psi_{i}^{*}E$ is pseudo-effective for each $A_{i}$.  Thus $A_{i} - \frac{\tau}{2}\widetilde{E}$ is also pseudo-effective, where $\widetilde{E}$ denotes the strict transform of $E$ on $\widetilde{Y}_{i}$.   Then
\begin{equation*}
0  \leq \left( A_{i} - \frac{\tau}{2} \widetilde{E} \right) \cdot A_{i}^{k} \cdot \psi_{i}^{*}H^{d-k-1}.
\end{equation*}
By taking a limit over pushforwards on all such models, we find
\begin{equation*}
0 \leq \langle (\phi^{*}L + \epsilon H)^{k+1} \rangle_{Y} \cdot H^{d-k-1} - \frac{\tau}{2} \langle (\phi^{*}L + \epsilon H)^{k} \rangle_{Y|E} \cdot H^{d-k-1}.
\end{equation*}
This is true for all sufficiently small $\epsilon$, so
\begin{equation*}
0 \leq \langle \phi^{*}L^{k+1} \rangle_{Y} \cdot H^{d-k-1} - \frac{\tau}{2} \langle \phi^{*}L^{k} \rangle_{Y|E} \cdot H^{d-k-1}.
\end{equation*}
By choosing sufficiently general elements $H_{1},\ldots,H_{d-k-1} \in |H|$, we may ensure that $E \cap H_{1} \cap \ldots \cap H_{d-k-1}$ maps finitely onto $W$ via $\phi$.  Letting the $A_{1},\ldots,A_{d-k}$ denote the ample divisors whose intersection is $W$, we have
\begin{align*}
\langle \phi^{*}L^{k} \rangle_{Y|E} \cdot H^{d-k-1} & = \langle \phi^{*}L^{k} \rangle_{Y|E \cap H_{1} \cap \ldots \cap H_{d-k-1}} \\
& = C \langle L^{k} \rangle_{X|W} \\
& = C \langle L^{k} \rangle_{X} \cdot A_{1} \cdot \ldots \cdot A_{d-k}
\end{align*}
for some positive constant $C$.  By assumption this latter quantity is positive, so
\begin{equation*}
0 < \langle \phi^{*}L^{k+1} \rangle_{Y} \cdot H^{d-k-1}
\end{equation*}
contradicting the fact that $\langle L^{k+1} \rangle_{X} = 0$.

\bigskip

$(7) \leq (1)$.  Let $k$ denote the value of (1).  Note that
\begin{align*}
t^{n-k} \langle (L+tA)^{k}  \rangle \cdot A^{n-k} & = \langle (L+tA)^{k} \cdot (tA)^{n-k} \rangle \\
& \leq \langle (L + tA)^{n} \rangle.
\end{align*}
(1) implies that there is some constant $C$ such that $C < \langle (L+tA)^k \rangle \cdot A^{n-k}$ for every $t>0$.  Thus we obtain $Ct^{n-k} < \vol(L+tA)$ for every $t>0$.

\bigskip

$(1) \leq (7)$.  Let $k$ denote the value of (7).  For every constant $C$ there is some $t>0$ such that
\begin{equation*}
\langle (L+tA)^{n} \rangle < Ct^{n-k-1}.
\end{equation*}
This implies that
\begin{align*}
t^{n-k-1} \langle (L+tA)^{k+1}  \rangle \cdot A^{n-k-1} < Ct^{n-k-1}
\end{align*}
so that for any $C$ there is some $t$ such that $\langle (L+tA)^{k+1} \rangle \cdot A^{n-k-1} < C$.  Note that the left hand side is increasing in $t$, so that the inequality must hold for arbitrarily small $t$.  Thus the value of (1) is at most $k$.
\end{proof}

The numerical dimension satisfies a number of natural properties.  All of the following are checked in \cite{nakayama04} Proposition V.2.7 except for (\ref{psigmaprop}) and (\ref{growthprop}).

\begin{thrm}[\cite{nakayama04}, Proposition V.2.7] \label{numdimproperties}
Let $X$ be a smooth variety and let $L$ be a pseudo-effective $\mathbb{R}$-divisor.
\begin{enumerate}
\item We have $0 \leq \nu(L) \leq \dim(X)$ and $\kappa(L) \leq \nu(L)$.
\item $\nu(L) = \dim(X)$ iff $L$ is big and $\nu(L) = 0$ iff $P_{\sigma}(L) \equiv 0$.
\item If $L'$ is pseudo-effective, then $\nu(L + L') \geq \nu(L)$.
\item If $f: Y \to X$ is any surjective morphism from a normal variety $Y$ then
$\nu(f^{*}L) = \nu(L)$.
\item \label{psigmaprop} $\nu(L) = \nu(P_{\sigma}(L))$.
\item Suppose that $f: X \to Z$ has connected fibers and $F$ is a very general fiber of $f$.  Then
$\nu(L) \leq \nu(L|_{F}) + \dim(Z)$.
\item \label{growthprop} Fix some sufficiently ample $\mathbb{Z}$-divisor $A$.  Then there are positive constants $C_{1},C_{2}$ so that
\begin{equation*}
C_{1}m^{\nu(L)} < h^{0}(X,\mathcal{O}_{X}(\lfloor mL \rfloor + A)) < C_{2}m^{\nu(L)}
\end{equation*}
for every sufficiently large $m$.
\end{enumerate}
\end{thrm}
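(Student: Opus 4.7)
The plan is to establish properties (5) and (7); the other items are proved in \cite{nakayama04}. Property (5) is immediate from the positive product characterization of $\nu$: by definition (1) of Theorem \ref{numericaldimensionequalities}, $\nu(L)=\max\{k : \langle L^k\rangle_X\neq 0\}$, and Proposition \ref{positiveproductpositivepart} gives $\langle L^k\rangle_X = \langle P_\sigma(L)^k\rangle_X$ for every $k$, so $\nu(L)=\nu(P_\sigma(L))$. The lower bound in (7) is essentially already contained in the proof of $(4)\leq(5)$ in Theorem \ref{numericaldimensionequalities}: that argument produces, for some sufficiently ample $A$, a constant $C_1>0$ and an $m_0$ such that $h^0(X,\lceil mL\rceil+A)>C_1 m^{\nu(L)}$ for $m\geq m_0$ (after absorbing $K_X$ and the auxiliary very ample divisors appearing there into the fixed ample class).

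For the upper bound in (7), I would proceed in two steps. Step 1 is the asymptotic volume decay
\begin{equation*}
\vol_{X|W}(L+tA)\leq C_W\, t^{\dim W-\nu(L)}\qquad\text{as } t\to 0^+,
\end{equation*}
for every very general complete intersection $W$ of members of $|A|$ with $a:=\dim W\geq \nu(L)$. The key input is the derivative formula $\tfrac{d}{dt}\vol_{X|W}(L+tA)=a\cdot\vol_{X|W\cap H}(L+tA)$, where $H$ is a very general member of $|A|$; this follows by combining Lemma \ref{volumederivative}, Proposition \ref{restrictedvolumeandproduct}, Lemma \ref{restrictedproductandnefdivisors}, and the differentiability of the positive product along the $V$-big cone established in \cite{bfj09}. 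Iterating by cutting $W$ further by elements of $|A|$, the chain eventually reaches dimension $\nu(L)$, where by definition (3) of Theorem \ref{numericaldimensionequalities} the limit $\lim_{t\to 0^+}\vol_{X|\cdot}(L+tA)$ is a finite positive constant. Unwinding $a-\nu(L)$ nested integrals gives the estimate. Specializing to $t=1/m$ and using Proposition \ref{restrictedvolumeandproduct} together with Lemma \ref{restrictedproductandnefdivisors}(2) yields
\begin{equation*}
\langle (mL+A)^a\rangle_X\cdot A^{n-a}=\vol_{X|W}(mL+A)=m^a\,\vol_{X|W}(L+A/m)\leq C\,m^{\nu(L)}
\end{equation*}
for every $a$ (the case $a<\nu(L)$ being immediate from continuity of $\vol_{X|W}$).

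Step 2 converts this to the $h^0$ bound. Apply Proposition \ref{psigmaapproachesnef2} to obtain birational models $\phi_m:Y_m\to X$, effective divisors $D_m\sim \lceil mL\rceil+A$, big nef divisors $M_m$ on $Y_m$, and a fixed effective $G$ on $X$ with $M_m\leq P_\sigma(\phi_m^*D_m)\leq M_m+\phi_m^*G$. Since the divisorial Zariski decomposition preserves $h^0$,
\begin{equation*}
h^0(X,\lceil mL\rceil+A)=h^0(Y_m,P_\sigma(\phi_m^*D_m))\leq h^0(Y_m,M_m+\phi_m^*G).
\end{equation*}
Choose a very ample divisor $H$ on $X$ with $H-G-K_X$ ample; writing $K_{Y_m}=\phi_m^*K_X+E_m$ with $E_m$ effective and $\phi_m$-exceptional, one obtains $\phi_m^*G\leq K_{Y_m}+\phi_m^*H$, hence $M_m+\phi_m^*G\leq K_{Y_m}+M_m+\phi_m^*H$. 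Since $M_m+\phi_m^*H$ is big and nef, Kawamata--Viehweg vanishing yields $h^0(Y_m,K_{Y_m}+M_m+\phi_m^*H)=\chi(Y_m,K_{Y_m}+M_m+\phi_m^*H)$. Expanding $\chi$ by Hirzebruch--Riemann--Roch produces a sum of intersection numbers $K_{Y_m}^{a} M_m^{b}(\phi_m^*H)^{c}\cdot\mathrm{td}_{n-a-b-c}(Y_m)$; for the pure-pullback contributions, using $M_m\leq P_\sigma(\phi_m^*D_m)$ together with Proposition \ref{positiveproductpositivepart}, Lemma \ref{nefcomparison}, and the projection formula gives $M_m^{b}\cdot(\phi_m^*\text{class})\leq\langle D_m^{b}\rangle_X\cdot(\text{class})\leq Cm^{\nu(L)}$ by Step 1.

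The hardest part will be the bookkeeping for terms in the Riemann--Roch expansion that involve the exceptional divisor $E_m$ or the Todd classes of $Y_m$, both of which vary with the model $\phi_m$. The plan is to use the projection formula to kill such terms whenever they appear with a $\phi_m^*H$-factor (since $\phi_{m*}(E_m\cdot\phi_m^*\alpha)=0$), and to verify that the remaining $E_m$- or $\mathrm{td}(Y_m)$-corrections are of strictly lower order in $m$ than the dominant intersection terms controlled by Step 1.
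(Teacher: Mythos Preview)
Your treatment of (5) and of the lower bound in (7) matches the paper's: (5) via Proposition \ref{positiveproductpositivepart}, and the lower bound in (7) is exactly the inequality displayed inside the proof of $(4)\leq(5)$ in Theorem \ref{numericaldimensionequalities}.

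For the upper bound in (7), the paper takes a completely different and much shorter route than your Steps 1--2. It uses characterization (6) of Theorem \ref{numericaldimensionequalities}: pick a subvariety $W$ of dimension $\nu(L)$ with $\varsigma(L,W)=0$. By Proposition \ref{nakayamasformulation} (and the discussion following it), there is an integer $q$ with $h^{0}(X,\mathcal{I}_{W}^{q}\otimes\mathcal{O}_{X}(\lceil mL\rceil+A))=0$ for all large $m$. Hence for large $m$ the restriction map
\[
H^{0}(X,\mathcal{O}_{X}(\lceil mL\rceil+A))\longrightarrow H^{0}(W_{q},\mathcal{O}_{W_{q}}(\lceil mL\rceil+A))
\]
is injective, where $W_{q}$ is the subscheme cut out by $\mathcal{I}_{W}^{q}$. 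Since $\dim W_{q}=\nu(L)$, the right-hand side grows at most like $m^{\nu(L)}$, and the upper bound follows in one line. Your Step 1 (the volume-decay estimate) is plausible but simply not needed.

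Your Step 2, by contrast, has a genuine gap---precisely the part you flag as ``the hardest''. Riemann--Roch on the varying models $Y_{m}$ involves Todd classes $\mathrm{td}_{j}(Y_{m})$ for $j\geq 1$ and powers $E_{m}^{a}$ of the exceptional divisor, none of which are controlled by data on $X$. Your proposed mechanism, the projection formula together with $\phi_{m*}(E_{m}\cdot\phi_{m}^{*}\alpha)=0$, only kills terms \emph{linear} in $E_{m}$; higher powers $E_{m}^{a}$ with $a\geq 2$ have nonzero pushforward in general (already for the blow-up of a surface at a point, $E^{2}=-1$). Likewise $\mathrm{td}_{j}(Y_{m})$ is not a pullback class for $j\geq 1$, and there is no a priori uniform bound on these classes as the models $Y_{m}$ become increasingly complicated with $m$. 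So the assertion that the exceptional and Todd corrections are ``of strictly lower order in $m$'' is unsupported, and I do not see how to repair it without essentially rebuilding the argument. The paper's route via the Nakayama constant avoids all of this by working on a \emph{fixed} subscheme $W_{q}\subset X$ rather than on a sequence of birational models.
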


\begin{proof}
(\ref{psigmaprop}) follows from the invariance of the positive product under passing to $P_{\sigma}$.

Consider the inequality of (\ref{growthprop}).  The leftmost inequality was stated explicitly while demonstrating the implication $(4) \leq (5)$ in the proof of Theorem \ref{numericaldimensionequalities}.  To show the rightmost inequality, let $W$ be a subvariety of dimension $\nu(L)$ with $\varsigma(L,W)=0$.  Proposition \ref{nakayamasformulation} (and the following discussion) shows that there is a positive integer $q$ with
\begin{equation*}
h^{0}(X,\mathcal{I}_{W}^{q} \otimes \mathcal{O}_{X}(\lceil mL \rceil + A)) = 0
\end{equation*}
for sufficiently large $m$.  Writing $W_{q}$ for the subscheme defined by the ideal $\mathcal{I}_{W}^{q}$, for sufficiently large $m$ there is an injection
\begin{equation*}
h^{0}(X,\mathcal{O}_{X}(\lceil mL \rceil + A)) \to h^{0}(W_{q},\mathcal{O}_{W_{q}}(\lceil mL \rceil + A))
\end{equation*}
and the rate of growth of the latter is bounded by $m^{\dim(W_{q})} = m^{\nu(L)}$.
\end{proof}

It is interesting to note that $\nu$ is \emph{not} lower semicontinuous as might be expected.  This is a consequence of the fact that the restricted positive product is only semi-continuous on the boundary of the $V$-pseudo-effective cone.

\begin{exmple}[\cite{bfj09}, Example 3.8] \label{notlowersemicontinuous}
Let $X$ be any smooth surface with infinitely many $-1$ curves.  Take some compact slice of $\overline{NE}^{1}(X)$.  We can choose a convergent sequence of distinct classes $\{ \alpha_{i} \}$ on this compact slice such that each $\alpha_{i}$ lies on a ray generated by a different $-1$ curve.  Note that for any irreducible curve $C$ there is at most one $i$ for which $\alpha_{i} \cdot C < 0$.  Thus $\beta := \lim_{i \to \infty} \alpha_{i}$ must be a nef class.  A non-trivial nef class $\beta$ has $\nu(\beta) \geq 1$, but $\nu(\alpha_{i}) = 0$ for every $i$.  Thus $\nu$ is not lower semi-continuous.
\end{exmple}

\begin{ques}
What properties does $\nu$ satisfy along the $V$-pseudo-effective boundary?
\end{ques}

\subsection{The Numerical Dimension for Normal Varieties}

Since the numerical dimension is a birational invariant, we can extend the definition to any normal variety $X$.

\begin{defn}
Let $X$ be a normal variety and let $L$ be an $\mathbb{R}$-Cartier divisor on $X$.  We define $\nu(L)$ to be $\nu(f^{*}L)$ where $f: Y \to X$ is any smooth model.
\end{defn}

We now complete the proof of Theorem \ref{equalityofnumericaldimension} by showing that the criteria of Theorem \ref{numericaldimensionequalities} can be applied directly to a normal variety.  Note that the numbering in the two theorems is different; we will use the numbering of Theorem \ref{equalityofnumericaldimension}.

\begin{proof}[Proof of Theorem \ref{equalityofnumericaldimension}:]
$(1) = \nu(L)$ since the arguments in the proof of \cite{nakayama04} Proposition V.2.7 show that (1) is a birational invariant even for normal varieties.

We next show that $(3) = \nu(L)$.  We first claim that there is a complete intersection $W$ of very general very ample divisors that maximizes (3).  Suppose that $V \subset X$ is a $k$-dimensional subvariety that achieves the maximum value in (3).  Choose very ample divisors $A_{1}, \ldots, A_{n-k}$ whose (scheme-theoretic) complete intersection $W_{0}$ contains $V$ and also has dimension $k$.  Set $P = \mathbb{P}H^{0}(X,\mathcal{O}_{X}(A_{1})) \times \ldots \times \mathbb{P}H^{0}(X,\mathcal{O}_{X}(A_{n-k}))$.

Let $\mathcal{J}$ be the ideal sheaf on $X \times P$ whose restriction to a fiber of the second projection is the ideal sheaf of the corresponding complete intersection on $X$.  Note that $\mathcal{J}$ is flat over the locus on $P$ representing intersections of the expected dimension.  By upper-semicontinuity, we find that for any fixed divisor $D$ we have
\begin{equation*}
h^{0}(X,\mathcal{I}_{W}(\lfloor D \rfloor)) \leq h^{0}(X,\mathcal{I}_{W_{0}}(\lfloor D \rfloor))
\end{equation*}
for a general complete intersection $W$.  Thus
\begin{align*}
h^{0}(X|W,\mathcal{O}_{X}(\lfloor D \rfloor)) & \geq h^{0}(X|W_{0},\mathcal{O}_{X}(\lfloor D \rfloor)) \\
& \geq h^{0}(X|V,\mathcal{O}_{X}(\lfloor D \rfloor))
\end{align*}
since the restriction map $\mathcal{O}_{X} \to \mathcal{O}_{V}$ factors through restriction to $\mathcal{O}_{W_{0}}$.  In particular, if we fix a countable collection of divisors $D_{i}$, then for a very general complete intersection $W$ we have $\vol_{X|W}(D_{i}) \geq \vol_{X|V}(D_{i})$ for every $i$.  Setting $D_{i} := L + \frac{1}{i}A$ yields the claim.

Let $\phi: Y \to X$ be a smooth model of $X$.  For any ample divisor $A$ on $Y$, there is an ample divisor $H$ on $X$ such that $\phi^{*}H \geq A$.  Since $W$ is not contained in any $\phi$-exceptional center, we may furthermore ensure that $\Supp(\phi^{*}H - A)$ does not contain $W$.

In particular, for any ample divisor $A$ on $Y$ there is some $H$ on $X$ such that
\begin{equation*}
\vol_{X|W}(L+\epsilon H) = \vol_{Y|\widetilde{W}}(\phi^{*}(L + \epsilon H)) \geq \vol_{Y|\widetilde{W}}(\phi^{*}L + \epsilon A).
\end{equation*}
Similarly, for any ample divisor $H$ on $X$ there is an $A$ on $Y$ with $A - \phi^{*}H$ ample.  Thus  $(3) = \nu(L)$ is proved.

$(2) = \nu(L)$ follows from the arguments of the previous two paragraphs.

$(4) = \nu(L)$ since (4) remains unchanged upon passing to a smooth $V$-birational model.

Both $(5) = \nu(L)$ and $(6) = \nu(L)$ follow from Corollary \ref{nonvanishingposprod}.

$(7) = \nu(L)$ by Proposition \ref{nakayamaconstantbirational}.

\end{proof}

\section{The Restricted Numerical Dimension} \label{restrictednumericaldimensionsection}

We now turn to the restricted numerical dimension.  For a subvariety $V$, $\nu_{X|V}(L)$ should measure the maximal dimension of a subvariety $W \subset V$ such that the ``positive restriction'' of $L$ to $W$ is big.

\begin{thrm} \label{restrictednumericaldimensionequalities}
Let $X$ be a smooth variety, let $V$ be a subvariety of $X$, and let $L$ be a $V$-pseudo-effective divisor.  In the following, $A$ will denote some fixed sufficiently ample $\mathbb{Z}$-divisor and $W$ will range over all subvarieties of $V$ not contained in $\mathbf{B}_{-}(L)$.  Then the following quantities coincide:
\begin{enumerate}
\item $\max \{ k \in \mathbb{Z}_{\geq 0} | \langle L^{k} \rangle_{X|V} \neq 0 \}$.
\item $\max \{ \dim W | \langle L^{\dim W} \rangle_{X|W} > 0 \}$.
\item $\max \{ \dim W | \lim_{\epsilon \to 0} \vol_{X|W}(L + \epsilon A) > 0 \}$.
\item $\max \{ \dim W | \liminf_{\phi} \vol_{\widetilde{W}}([P_{\sigma}(\phi^{*}L)]|_{\widetilde{W}}) >0 \}$ where $\phi: (\widetilde{X},\widetilde{W}) \to (X,W)$
ranges over $W$-birational models.
\end{enumerate}
This common quantity is known as the restricted numerical dimension of $L$ along $V$ and is denoted $\nu_{X|V}(L)$.
It only depends on the numerical class of $L$.
\end{thrm}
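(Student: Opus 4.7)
The plan is to mirror the proof of Theorem~\ref{numericaldimensionequalities} in the restricted setting, establishing the cycle of inequalities $(1) \leq (2) = (3) = (4) \leq (1)$. The equivalences $(2) = (3) = (4)$ require only minor changes from Theorem~\ref{numericaldimensionequalities}: $(2) = (3)$ is immediate from Proposition~\ref{restrictedvolumeandproduct}, which equates $\deg \langle L^{\dim W} \rangle_{X|W}$ with $\vol_{X|W}(L)$ when $L$ is $W$-big; and $(3) = (4)$ is obtained by invoking Theorem~\ref{positiveproductpsigmaalternative} to rewrite $\vol_{X|W}(L + \epsilon A) = \inf_{\phi} \vol_{\widetilde{W}}(P_{\sigma}(\phi^{*}(L + \epsilon A))|_{\widetilde{W}})$ over $W$-birational models, and commuting the $\epsilon \to 0$ limit with the infimum by monotonicity; since the restricted volume is non-decreasing in $\epsilon$ and non-increasing upon passing to higher models, $\liminf_{\phi}$ agrees with $\inf_{\phi}$ here.

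For $(1) \leq (2)$, let $k$ denote the value of (1), so $\langle L^{k} \rangle_{X|V} \neq 0$ in $N^{k}(V)$. Since this class lies in the closure of the cone of effective cycles, it must pair strictly positively with some product of very ample classes. Iterative application of Lemma~\ref{restrictedproductandnefdivisors}(2) with very general very ample $H_{1}, \ldots, H_{d-k}$ on $X$ (where $d = \dim V$) identifies $\deg(\langle L^{k} \rangle_{X|V} \cdot H_{1}|_{V} \cdots H_{d-k}|_{V})$ with $\deg\langle L^{k} \rangle_{X|W}$, where $W = V \cap H_{1} \cap \ldots \cap H_{d-k}$ has dimension $k$, and a suitable choice of $H_{i}$ makes this positive, verifying (2).

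The hard direction is $(2) \leq (1)$, which I adapt from the strategy of Proposition~\ref{ampleintersectiondominates}. Given $W \subset V$ of dimension $k$ with $\deg \langle L^{k} \rangle_{X|W} > 0$, choose very ample divisors $A_{1}, \ldots, A_{d-k}$ on $X$ and positive integers $c_{i}$ so that each ideal $\mathcal{I}_{W} \otimes \mathcal{O}_{X}(c_{i} A_{i})$ is globally generated. Select the sequence of birational maps $\phi_{m} \colon \widetilde{X}_{m} \to X$ from Corollary~\ref{positiveproductpsigmapsef} along with nef approximants $N_{i,m}$ from Proposition~\ref{psigmaapproachesnef2}; crucially, these constructions depend only on $L$, so the same data compute the positive product for both $V$ and $W$, and these maps are $W$-birational (hence automatically $V$-birational, since $W \subset V$ forces $V$ outside any exceptional center of $\phi_{m}$ whenever $W$ is). On each $\widetilde{X}_{m}$, pick divisors $D_{i,m} \in |c_{i} \phi_{m}^{*} A_{i}|$ with multiplicity at least one along $\widetilde{W}_{m}$; since $\widetilde{W}_{m} \subset \widetilde{V}_{m}$, each restriction $D_{i,m}|_{\widetilde{V}_{m}}$ has multiplicity at least one along $\widetilde{W}_{m}$, and the scheme-theoretic intersection $D_{1,m}|_{\widetilde{V}_{m}} \cap \ldots \cap D_{d-k,m}|_{\widetilde{V}_{m}}$ contains $\widetilde{W}_{m}$ with positive multiplicity and all other components effective, yielding the cycle comparison $\phi_{m}^{*}D_{1,m} \cdots \phi_{m}^{*}D_{d-k,m} \cdot \widetilde{V}_{m} \succeq C [\widetilde{W}_{m}]$ in $N^{n-k}(\widetilde{X}_{m})$ for some $C \geq 1$. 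Pairing with the nef product $N_{1,m} \cdots N_{k,m}$ and passing to the limit $m \to \infty$ gives
\begin{equation*}
\deg\!\bigl(\langle L^{k} \rangle_{X|V} \cdot A_{1}|_{V} \cdots A_{d-k}|_{V}\bigr) \;\geq\; \frac{C}{c_{1} \cdots c_{d-k}} \deg \langle L^{k} \rangle_{X|W} \;>\; 0,
\end{equation*}
forcing $\langle L^{k} \rangle_{X|V} \neq 0$.

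The main obstacle is engineering a single sequence of models and nef approximants that simultaneously computes $\langle L^{k} \rangle_{X|V}$ and $\langle L^{k} \rangle_{X|W}$ while allowing the auxiliary divisors $D_{i,m}$ to be chosen with the correct multiplicity along $\widetilde{W}_{m}$; the resolution is that the asymptotic-multiplier-ideal construction underlying Corollary~\ref{positiveproductpsigmapsef} is intrinsic to $L$, so once $\phi_{m}$ is fixed, the $D_{i,m}$ may be chosen independently from the linear systems $|c_{i} \phi_{m}^{*} A_{i}|$ without disturbing the positive-product computation.
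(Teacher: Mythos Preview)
Your proposal is correct and follows exactly the approach the paper takes: the paper merely says the argument is the same as for the first four equivalences in Theorem~\ref{numericaldimensionequalities}, and you have written out precisely those adaptations, including the restricted analogue of Proposition~\ref{ampleintersectiondominates} for the direction $(2)\leq(1)$. One cosmetic slip: in the cycle comparison you write $\phi_m^{*}D_{i,m}$ where you mean $D_{i,m}$, since those divisors already live on $\widetilde{X}_m$.
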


The argument is the same as in the proof of the first four equivalences in Theorem \ref{numericaldimensionequalities}.  One wonders whether the other equalities in Theorem \ref{numericaldimensionequalities} can be extended to analogous notions for the restricted numerical dimension.  Perhaps the most important is the restricted version of $\kappa_{\sigma}$:

\begin{defn}
Let $X$ be a smooth variety, $V$ a subvariety, and $L$ a $V$-pseudo-effective divisor.  Fix any divisor $A$.  If $H^{0}(X|V,\mathcal{O}_{X}(\lfloor mL + A \rfloor))$ is non-zero for infinitely many values of $m$, we define
\begin{equation*}
\kappa_{\sigma}(X|V,L;A) := \max \left\{ k \in \mathbb{Z}_{\geq 0} \left| \limsup_{m \to \infty} \frac{h^{0}(X|V, \mathcal{O}_{X}(\lfloor mL + A \rfloor))}
{m^{k}} > 0 \right. \right\}.
\end{equation*}
Otherwise, we define $\kappa_{\sigma}(X|V,L;A) = -\infty$.
The restricted $\sigma$-dimension $\kappa_{\sigma}(X|V,L)$ is defined to be
\begin{equation*}
\kappa_{\sigma}(X|V,L) := \max_{A} \{ \kappa_{\sigma}(X|V,L;A) \}.
\end{equation*}
\end{defn}

Arguing as in the proof of \cite{nakayama04} Proposition V.2.7, one can check that the restricted $\sigma$-dimension is a numerical and birational invariant.

\begin{ques}
Let $X$ be a smooth variety, $V$ be a subvariety, and $L$ be a $V$-pseudo-effective divisor.  Does $\nu_{X|V}(L) = \kappa_{\sigma}(X|V,L)$?
\end{ques}

Since the restricted numerical dimension is invariant under passing to admissible models, we can extend the definition to pairs with singularities.

\begin{defn}
Let $X$ be a normal variety, $V$ a subvariety not contained in $\Sing(X)$, and $L$ a $V$-pseudo-effective divisor.  We define $\nu_{X|V}(L) = \nu_{Y|W}(f^{*}L)$ where $(Y,W)$ is any smooth $V$-birational model of $(X,V)$.
\end{defn}

\subsection{Properties of the Restricted Numerical Dimension}

The restricted numerical dimension satisfies similar properties to the numerical dimension.  Since we know less about $\nu_{X|V}$, the statements are slightly weaker.

\begin{thrm} \label{restrictednumericaldimensionproperties}
Let $X$ be a smooth variety, $V$ a subvariety of $X$, and $L$ a $V$-pseudo-effective divisor.
\begin{enumerate}
\item $\nu_{X|V}(L) \leq \nu(L)$ and if $V$ is normal then $\nu_{X|V}(L) \leq \nu(L|_{V})$.
\item $\nu_{X|V}(L) = \nu_{X|V}(P_{\sigma}(L))$.
\item When $L$ is nef, $\nu_{X|V}(L) = \nu_{V}(L|_{V})$.
\item If $L'$ is also $V$-pseudo-effective, then $\nu_{X|V}(L + L') \geq \nu_{X|V}(L)$.
\item Suppose that $\nu_{X|V}(L) < \dim(V)$.  If $H$ is a very general very ample divisor on $X$ then $\nu_{X|V}(L) = \nu_{X|V \cap H}(L)$
\item If $\phi: (\widetilde{X},\widetilde{V}) \to (X,V)$ is an admissible model with $\widetilde{X}$ smooth then $\nu_{\widetilde{X}|\widetilde{V}}(\phi^{*}L) = \nu_{X|V}(L)$.
\item Let $\phi: Y \to X$ be a smooth birational model and let $W$ be a subvariety of $Y$ such that $\phi|_{W}$ maps surjectively onto $V$.  Then $\nu_{Y|W}(\phi^{*}L) = \nu_{X|V}(L)$.
\end{enumerate}
\end{thrm}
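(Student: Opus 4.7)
The plan is to invoke, for each property, the appropriate characterization from Theorem \ref{restrictednumericaldimensionequalities} and then apply the positive-product technology developed in Sections \ref{divzardecomsection} and \ref{restrictedproductsection}. For (1), I will use characterization (2): every $W \subset V$ is a subvariety of $X$, so the supremum defining $\nu_{X|V}(L)$ is taken over a subset of the one defining $\nu(L)$ via Theorem \ref{numericaldimensionequalities}(2). For the second inequality in (1), characterization (3) and the factorization of sections through $V$ yield $\vol_{X|W}(L + \epsilon A) \leq \vol_{V|W}((L + \epsilon A)|_{V})$. Part (2) is immediate from Proposition \ref{positiveproductpositivepart}. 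For (3), iterating Lemma \ref{restrictedproductandnefdivisors}(1) gives $\langle L^{k} \rangle_{X|W} = (L|_{W})^{k} = \langle (L|_{V})^{k} \rangle_{V|W}$ for nef $L$ and $W \subset V$, so characterization (2) on both sides coincides. For (4), iterated super-additivity of the restricted positive product yields $\langle (L + L')^{k} \rangle_{X|W} \succeq \langle L^{k} \rangle_{X|W}$ for every $W \subset V$.

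For Part (5), the inequality $\nu_{X|V \cap H}(L) \leq \nu_{X|V}(L)$ is clear since $V \cap H \subset V$. For the reverse, set $k = \nu_{X|V}(L) < \dim V$; characterization (1) gives $\langle L^{k} \rangle_{X|V} \neq 0$, a pseudo-effective class in $N^{k}(V)$. Arguing as in the proof of $(1) = (2)$ in Theorem \ref{numericaldimensionequalities}, for very general very ample divisors $H = H_{1}, H_{2}, \ldots, H_{\dim V - k}$ one has $\langle L^{k} \rangle_{X|V} \cdot H_{1} \cdot H_{2} \cdots H_{\dim V - k} > 0$. In particular $\langle L^{k} \rangle_{X|V} \cdot H \neq 0$, and Lemma \ref{restrictedproductandnefdivisors}(2) identifies this with $\langle L^{k} \rangle_{X|V \cap H}$, so $\nu_{X|V \cap H}(L) \geq k$.

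Parts (6) and (7) both rely on Proposition \ref{restrictedpositiveproductadmissiblemodels}, which gives $\phi_{*} \langle \phi^{*}L^{k} \rangle_{Y|W'} = \deg(\phi|_{W'}) \langle L^{k} \rangle_{X|\phi(W')}$ for admissible subvarieties $W'$. For (6), $\phi$ is $V$-birational so $\phi|_{\widetilde{V}}$ has degree one and the formula identifies nonvanishing on the two sides, giving the equality via characterization (1). For (7), I would use characterization (2) on $\nu_{Y|W}(\phi^{*}L)$ and stratify the subvarieties $W' \subset W$ into horizontal ($\dim W' = \dim \phi(W')$) and vertical ($\dim W' > \dim \phi(W')$) types. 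Horizontal $W'$ give admissible models over $\phi(W') \subset V$, so their contributions match those of the corresponding subvarieties of $V$ via Proposition \ref{restrictedpositiveproductadmissiblemodels}. Vertical $W'$ should contribute zero because the nef divisors approximating $P_{\sigma}(\psi^{*}\phi^{*}L)$ from below on a $W'$-birational model $\psi$ restrict on $\widetilde{W'}$ to classes dominated by the pullback of $L|_{\phi(W')}$, whose top self-intersection on $\widetilde{W'}$ vanishes because it factors through a lower-dimensional variety.

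The main obstacle will be the vertical case in Part (7): to justify rigorously that $\langle \phi^{*}L^{\dim W'} \rangle_{Y|W'} = 0$ when $\phi|_{W'}$ has positive-dimensional fibers, one must combine the birational definition of the restricted positive product from Definition \ref{restrictedpositiveproductdefn} with the observation that a nef class bounded above by the pullback of a divisor from a lower-dimensional variety has vanishing top self-intersection. A secondary subtlety in Part (5) is the auxiliary positivity fact, used implicitly in the proof of Theorem \ref{numericaldimensionequalities}, that a nonzero class in the closure of the cone of effective codimension-$k$ cycles on $V$ has positive intersection with $\dim V - k$ very general very ample divisors.
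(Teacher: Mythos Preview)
Your arguments for (1)--(5) are essentially the paper's, sometimes via a different but equivalent characterization; they are fine. In (6) you write ``$\phi$ is $V$-birational so $\phi|_{\widetilde{V}}$ has degree one,'' but an admissible model is only required to have $\phi|_{\widetilde{V}}$ generically finite, not birational. This is harmless: Proposition~\ref{restrictedpositiveproductadmissiblemodels} still gives $\phi_{*}\langle (\phi^{*}L)^{k}\rangle_{\widetilde{X}|\widetilde{V}} = \deg(\phi|_{\widetilde{V}})\,\langle L^{k}\rangle_{X|V}$, and nonvanishing transfers both ways by the argument of Corollary~\ref{nonvanishingposprod}. The paper likewise just cites Proposition~\ref{restrictedpositiveproductadmissiblemodels}.

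Part (7) is where you and the paper genuinely diverge. The paper does \emph{not} stratify subvarieties of $W$ into horizontal and vertical pieces. Instead it argues as follows: when $\dim W > \dim V$, every fiber of $\phi|_{W}$ is covered by curves $C$ with $\phi^{*}L\cdot C=0$; since a general such curve avoids $\mathbf{B}_{-}(\phi^{*}L)=\phi^{-1}\mathbf{B}_{-}(L)$, on any $W$-birational model $\psi$ the strict transform $\widetilde{W}$ is covered by curves with $P_{\sigma}(\psi^{*}\phi^{*}L)\cdot C=0$, forcing $\vol_{\widetilde{W}}(P_{\sigma}(\psi^{*}\phi^{*}L)|_{\widetilde{W}})=0$ and hence $\nu_{Y|W}(\phi^{*}L)<\dim W$. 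One then applies (5) to cut $W$ by a very general hyperplane, reducing $\dim W$ by one while preserving $\nu_{Y|W}$, and iterates until $\dim W=\dim V$, where (6) applies.

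Your vertical-$W'$ argument has a gap exactly where you suspected. From $N|_{\widetilde{W'}}\le_{\widetilde{W'}}(\text{pullback of }L|_{\phi(W')})$ with effective difference you want $(N|_{\widetilde{W'}})^{\dim W'}=0$, but passing from ``$\leq$ with effective difference'' to an inequality of top self-intersections requires the upper bound to be nef, and $L|_{\phi(W')}$ need not be. Moreover, in the actual construction one must perturb by $\epsilon A$ with $A$ ample on $Y$, and $(\psi^{*}A)|_{\widetilde{W'}}$ does \emph{not} factor through $\phi(W')$, so the upper bound is not even a pullback from a lower-dimensional variety. The paper's covering-curves observation sidesteps all of this: it yields $P_{\sigma}(\psi^{*}\phi^{*}L)\cdot C=0$ directly (nef $+$ bounded above by $\psi^{*}\phi^{*}L$ which has degree zero on fiber curves), which is exactly what characterization~(4) needs. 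If you want to salvage your stratification, the cleanest fix is to borrow this same curve argument for each vertical $W'$ rather than attempting the self-intersection bound.
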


\begin{proof} $ $

\begin{enumerate}
\item Note that if $Z$ and $Z'$ are subvarieties of $X$ with $Z \subset Z'$, then $\vol_{X|Z'}(L) \geq \vol_{X|Z}(L)$ since the restriction map on sections of $L$ from $X$ to $Z$ factors through the restriction map to $Z'$.

Fix an ample divisor $A$ on $X$ and let $W$ be an intersection of very general very ample divisors on $X$.  The two inequalities follow from the fact that $\vol_{X|W}(L + \epsilon A) \geq \vol_{X|V \cap W}(L + \epsilon A)$ and $\vol_{X|V}(L + \epsilon A) \geq \vol_{X|V \cap W}(L + \epsilon A)$.  

\item This follows from the fact that the restricted positive product is invariant under passing to $P_{\sigma}$ as demonstrated in Proposition \ref{positiveproductpositivepart}.

\item The restricted volume of an ample divisor can be calculated as an intersection product, so the equality follows from characterization (3) in Theorem \ref{restrictednumericaldimensionequalities}.

\item Fix an ample divisor $A$.  The inequality follows from $\vol_{X|W}(L+L'+2\epsilon A) \geq \vol_{X|W}(L+\epsilon A)$.

\item Using characterization (1) in Theorem \ref{restrictednumericaldimensionequalities}, we see that if $k < \dim V$ then $\langle L^{k} \rangle_{X|V} \neq 0$ iff $\langle L^{k} \rangle_{X|V \cap H} = \langle L^{k} \rangle_{X|V} \cdot H \neq 0$.

\item This is a consequence of Proposition \ref{restrictedpositiveproductadmissiblemodels} which describes how the restricted positive product is compatible with admissible models.

\item First suppose that $\dim(W) > \dim(V)$; we show that $\nu_{Y|W}(\phi^{*}L) < \dim(W)$.  Every fiber of $\phi|_{W}$ is covered by curves satisfying $\phi^{*}L \cdot C = 0$.  Since $\mathbf{B}_{-}(\phi^{*}L) = \phi^{-1}\mathbf{B}_{-}(L)$, the general such curve avoids $\mathbf{B}_{-}(\phi^{*}L)$.  In particular, for any $W$-birational model $\psi: \widetilde{Y} \to Y$, the subvariety $\widetilde{W}$ is covered by curves satisfying $P_{\sigma}(\psi^{*}\phi^{*}L) \cdot C = 0$.  Thus $\nu_{Y|W}(\phi^{*}L) < \dim(W)$ by characterization (4) in Theorem \ref{restrictednumericaldimensionequalities}.

Fix a very general very ample divisor $H$ on $Y$.  By property (5), $\nu_{Y|W}(\phi^{*}L) = \nu_{Y|W \cap H}(\phi^{*}L)$.  Proceeding inductively, we reduce to the case when $\dim(W) = \dim(V)$, which is (6).
\end{enumerate}
\end{proof}

It is important to note that we can have $\nu_{X|V}(L) = \dim(V)$ even when $L$ is not $V$-big.

\begin{exmple}
Let $X$ be a smooth variety, $V$ a smooth subvariety, and $L$ a $V$-big divisor.  Let $\phi: (Y,W) \to (X,V)$ be an admissible model such that some $\phi$-exceptional center contains $V$.  Then $\phi^{*}L$ is $W$-pseudo-effective but not $W$-big. Nevertheless, the invariance of $\nu_{X|V}(L)$ under passing to admissible models shows that we still have $\nu_{X|V}(L) = \dim(V)$.
\end{exmple}

We next show that the non-vanishing of $\nu(L)$ can be detected by the restricted numerical dimension $\nu_{X|C}(L)$ for a very general curve $C$.

\begin{prop}
Let $X$ be a smooth variety and let $L$ be a pseudo-effective divisor on $X$.  Then $\nu(L) > 0$ iff there is a curve $C$ on $X$ defined as a very general complete intersection of very ample divisors with $\nu_{X|C}(L) > 0$.
\end{prop}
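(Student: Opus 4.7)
The backward implication is immediate: by property (1) of Theorem \ref{restrictednumericaldimensionproperties}, any subvariety $C$ on which the restricted numerical dimension is defined satisfies $\nu_{X|C}(L) \leq \nu(L)$, so $\nu_{X|C}(L) > 0$ forces $\nu(L) > 0$. The substance of the proposition lies in the forward implication.

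Suppose $\nu(L) > 0$. By characterization (5) of Theorem \ref{equalityofnumericaldimension} this is equivalent to $\langle L \rangle_X \neq 0$, and since $\langle L \rangle_X = [P_\sigma(L)]$ (as recorded in the first example after Proposition \ref{restrictedvolumeandproduct}), this is equivalent to $P_\sigma(L) \not\equiv 0$. Fix a very ample divisor $H$ and choose very general $H_1,\ldots,H_{n-1} \in |H|$; set $C := H_1 \cap \cdots \cap H_{n-1}$, which by Bertini is a smooth irreducible curve. By Theorem \ref{restrictedbaselocuscountable} the diminished base locus $\mathbf{B}_-(L)$ is a countable union of proper closed subvarieties, and a very general complete intersection curve is not contained in any such countable union, so $C \not\subset \mathbf{B}_-(L)$ and $\nu_{X|C}(L)$ is defined.

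Now iterate Lemma \ref{restrictedproductandnefdivisors}(2), starting from $V = X$ and cutting by one $H_i$ at each stage, to obtain
\begin{equation*}
\langle L \rangle_{X|C} \;=\; \langle L \rangle_X \cdot H_1 \cdots H_{n-1} \;\in\; N^1(C) \cong \mathbb{R},
\end{equation*}
whose degree equals the intersection number $P_\sigma(L) \cdot H^{n-1}$. By characterization (1) of Theorem \ref{restrictednumericaldimensionequalities}, $\nu_{X|C}(L) \geq 1$ will follow once we show this number is strictly positive.

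The main obstacle is precisely this positivity claim: \emph{a non-zero pseudo-effective $\mathbb{R}$-divisor class on a smooth projective variety has strictly positive intersection with $H^{n-1}$ for any ample $H$}. The class $H^{n-1}$ is represented by the movable curve $C$, so pseudo-effective classes pair non-negatively with it; the strict inequality when $P_\sigma(L) \not\equiv 0$ I would prove by intersecting with $n-2$ additional very general hyperplanes to reduce to a smooth surface $S$, writing the Zariski decomposition $P_\sigma(L)|_S = P + N$ with $P$ nef and $N$ effective, observing that $N \cdot H|_S \geq 0$ (with equality only if $N = 0$, since effective divisors intersect ample curves positively), and applying the Hodge index theorem to conclude $P \cdot H|_S > 0$ whenever $P \not\equiv 0$. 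A Lefschetz-type argument ensures the restriction of $P_\sigma(L)$ to the very general surface $S$ remains non-zero, so at least one of these contributions is strictly positive. Once this standard positivity is in hand, $\deg \langle L \rangle_{X|C} > 0$, hence $\nu_{X|C}(L) \geq 1 > 0$.
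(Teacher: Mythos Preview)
Your proof is correct and lands on the same decisive fact as the paper: $\nu_{X|C}(L)>0$ if and only if $P_{\sigma}(L)\cdot C>0$, and this intersection number is positive precisely when $P_{\sigma}(L)\not\equiv 0$ because $C$ is a complete intersection of amples. The paper treats the backward implication identically (via Theorem \ref{restrictednumericaldimensionproperties}) and proves the forward implication by contrapositive, so the logical flow matches yours.

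The one genuine difference is \emph{which} characterization of $\nu_{X|C}$ you invoke. The paper uses characterization (4) of Theorem \ref{restrictednumericaldimensionequalities}: it shows that for every $C$-birational model $\phi$ one has $\vol_{\widetilde{C}}(P_{\sigma}(\phi^{*}L)|_{\widetilde{C}})=P_{\sigma}(L)\cdot C$, using that a very general $C$ avoids every component of $\mathbf{B}_{-}(P_{\sigma}(L))$ so that the discrepancy between $P_{\sigma}(\phi^{*}L)$ and $\phi^{*}P_{\sigma}(L)$ does not meet $\widetilde{C}$. You instead use characterization (1) together with Lemma \ref{restrictedproductandnefdivisors}(2), iterating the hyperplane cut to get $\langle L\rangle_{X|C}=\langle L\rangle_{X}\cdot H^{\,n-1}=P_{\sigma}(L)\cdot H^{\,n-1}$ directly. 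Your route is slightly more economical, since it bypasses the model-by-model verification; the paper's route has the virtue of making explicit why the restricted volume stabilizes across all birational models. As for the final positivity step, the paper simply asserts it, while you sketch the standard surface-reduction argument via Hodge index plus Lefschetz injectivity of $N^{1}(X)\to N^{1}(S)$; that sketch is fine and is exactly what underlies the paper's assertion.
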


\begin{proof}
If $\nu(L) = 0$ then $\nu_{X|C}(L) = 0$ by Theorem \ref{restrictednumericaldimensionproperties}.

Conversely, suppose that $C$ is a very general intersection of very ample divisors.  By choosing $C$ appropriately, we may assume that it avoids every component of $\mathbf{B}_{-}(P_{\sigma}(L))$.  In particular, for any $C$-birational model $\phi: Y \to X$ we have
\begin{equation*}
\vol(P_{\sigma}(\phi^{*}L)|_{\widetilde{C}}) = P_{\sigma}(\phi^{*}L) \cdot \widetilde{C} = \phi^{*}P_{\sigma}(L) \cdot \widetilde{C} = P_{\sigma}(L) \cdot C.
\end{equation*}
Thus, if $\nu_{X|C}(L)=0$, then $P_{\sigma}(L) \cdot C = 0$.  But since $C$ is an intersection of ample divisors, this implies that $P_{\sigma}(L) \equiv 0$ and $\nu(L) = 0$.
\end{proof}

\nocite{*}
\bibliographystyle{amsalpha}
\bibliography{numdim}

\end{document}